\newif \ifJ
\titleformat*{\section}{\large\bfseries}
\titleformat*{\subsection}{\bfseries}
\pgfplotsset{compat=newest}
\pgfplotsset{
    every axis plot/.append style = {font = \scriptsize}
  }
\numberwithin{equation}{section}
\newtheorem{theorem}{Theorem}[section]{\bfseries}{\it}
\newtheorem{proposition}[theorem]{Proposition}{\bfseries}{\it}
\newtheorem{lemma}[theorem]{Lemma}{\bfseries}{\it}
\newtheorem{corollary}[theorem]{Corollary}{\bfseries}{\it}
{\bfseries}{\it}
\newtheorem{example}{Example}[section]{\bfseries}{\rmfamily}
{\bfseries}{\it}
\theoremstyle{definition}
\newtheorem{remark}{Remark}[section]{\bfseries}{\rmfamily}
\newcommand{\R}{\mathbb{R}}
\newcommand{\N}{\mathbb{N}}
\DeclareMathOperator{\diam}{diam}
\DeclareMathOperator{\argmax}{arg\,max}
\DeclareMathOperator{\Div}{div}
\renewcommand{\dim}{d}
\newcommand{\Res}{\mathcal{R}}
\newcommand{\abs}[1]{\lvert#1\rvert}
\newcommand{\norm}[1]{\left\lVert#1\right\rVert}
\newcommand{\Mfty}{M_\infty}
\newcommand{\Rdd}{\R^{\dim\times\dim}}
\newcommand{\Rddsymp}{\R^{\dim\times\dim}_{\mathrm{sym},+}}
\newcommand{\T}{\mathcal{T}}
\newcommand{\tends}{\rightarrow}
\newcommand{\dx}{\mathrm{d}x}
\newcommand{\ds}{\mathrm{d}s}
\newcommand{\etastabi}{\eta_{\mathrm{stab},i}}
\newcommand{\jump}[1]{\left\llbracket #1 \right\rrbracket}
\newcommand{\oscKqi}{\mathrm{osc}_{K,\kappa,i}}
\newcommand{\oscKqH}{\mathrm{osc}_{K,\kappa,1}}
\newcommand{\oscKqF}{\mathrm{osc}_{K,\kappa,2}}
\newcommand{\calV}{\mathcal{V}}
\newcommand{\calVi}{\mathcal{V}^I}
\newcommand{\calF}{\mathcal{F}}
\newcommand{\calFi}{\mathcal{F}^I}
\newcommand{\calFiK}{\mathcal{F}^I_K}
\newcommand{\calFKi}{\calFiK}
\newcommand{\calFinK}{\mathcal{F}^{I,N}_{K}}
\newcommand{\calFiz}{\mathcal{F}^I_z}
\newcommand{\calE}{\mathcal{E}}
\newcommand{\calEKI}{\mathcal{E}_K^I}
\newcommand{\TK}{\T_K}
\newcommand{\tTK}{\widetilde{\T}_K}
\newcommand{\Tz}{\T_z}
\newcommand{\TF}{\T_F}
\newcommand{\Resi}{R_i}
\newcommand{\RH}{R_1}
\newcommand{\RF}{R_2}
\newcommand{\etaKi}{\eta_{K,i}}
\newcommand{\etaKH}{\eta_{K,1}}
\newcommand{\etaKF}{\eta_{K,2}}
\newcommand{\etaResi}{\eta_{\mathrm{res},i}}
\newcommand{\rKi}{r_{K,i}}
\newcommand{\rKH}{r_{K,1}}
\newcommand{\rKF}{r_{K,2}}
\newcommand{\ji}{j_{F,i}}
\newcommand{\jH}{j_{F,1}}
\newcommand{\jF}{j_{F,2}}
\newcommand{\omz}{\omega_z}
\newcommand{\omK}{\omega_K}
\newcommand{\edgeweight}{\gamma_{\T,E}}
\newcommand{\IT}{I_\T}
\newcommand{\GammaIn}{\Gamma_{\mathrm{in}}}
\newcommand{\GammaD}{\Gamma_{\mathrm{D}}}
\newcommand{\GammaN}{\Gamma_{\mathrm{N}}}
\newcommand{\TITLESTRING}{A posteriori error bounds for finite element approximations of steady-state mean field games}
\newcommand{\ABSTRACTSTRING}{We analyze \emph{a posteriori} error bounds for stabilized finite element discretizations of second-order steady-state mean field games.
We prove the local equivalence between the $H^1$-norm of the error and the dual norm of the residual. We then derive reliable and efficient estimators for a broad class of stabilized first-order finite element methods.
We also show that in the case of affine-preserving stabilizations, the estimator can be further simplified to the standard residual estimator.
Numerical experiments illustrate the computational gains in efficiency and accuracy from the estimators in the context of adaptive methods.}
\begin{document}

\ifJ

\else

\title{\TITLESTRING}
\author{Yohance A. P. Osborne\footnotemark[1]~, Iain Smears\footnotemark[2]~ and Harry Wells\footnotemark[2]}

\maketitle

\renewcommand{\thefootnote}{\fnsymbol{footnote}}

\footnotetext[1]{Department of Mathematical Sciences, Durham University, Stockton Road, DH1 3LE Durham, United Kingdom (\texttt{yohance.a.osborne@durham.ac.uk}).}

\footnotetext[2]{Department of Mathematics, University College London, Gower
Street, WC1E 6BT London, United Kingdom (\texttt{i.smears@ucl.ac.uk}, \texttt{harry.wells@ucl.ac.uk}).}

\begin{abstract}
\ABSTRACTSTRING
\end{abstract}

\fi

%

\section{Introduction}\label{sec-1-introduction}
Mean field games (MFG), which were introduced by{\ifJ\else~Lasry and Lions\fi}~\cite{lasry2006jeuxI,lasry2006jeuxII,lasry2007mean} and independently
by{\ifJ\else~Huang, Caines, and Malham\'{e}\fi}~\cite{huang2006large}, model Nash equilibria of rational games of stochastic
optimal control in which there are infinitely many players. The equilibria are characterized by a system of partial differential equations (PDE) that consist of a Hamilton--Jacobi--Bellman (HJB) equation coupled with a Kolmogorov--Fokker--Planck (KFP) equation.
Comprehensive reviews of MFG, including analysis and applications, are well-documented in the literature{\ifJ,~c.f.~\cite{GueantLasryLions2003,GomesSaude2014,achdou2021mean}\else~\cite{achdou2021mean,GomesSaude2014,GueantLasryLions2003}\fi}.
We consider as a model problem a quasilinear elliptic stationary MFG system of the form
\begin{equation}\label{sys}
\begin{aligned}
- \nu\Delta u+H(x,\nabla u)&=F[m](x)  &&\text{in }\Omega,
\\
-\nu\Delta m -\Div\left(m\frac{\partial H}{\partial p}\left(x,\nabla u\right)\right)&= G(x)  &&\text{in }\Omega,
\end{aligned}
\end{equation}
along with homogeneous Dirichlet boundary conditions $u=0$ and $m=0$ on $\partial \Omega$, where $\Omega\subset \mathbb{R}^d$ denotes the state space of the game, which is assumed to be a bounded, connected, polytopal open set with Lipschitz boundary $\partial \Omega$.
Here $u$ denotes the value function of the underlying optimal control problem faced by the players, and $m$ denotes the density of players in the state space of the game. 
This model problem can be interpreted as a steady-state solution for a game where the players exit the game upon reaching the boundary, which occurs for instance in applications to traffic flow and evacuation planning. 

The numerical analysis of MFG is an active area of research, with early approaches including works on finite difference methods~{\ifJ\cite[]{achdou2010mean,achdou2013mean}\else\cite{achdou2013mean,achdou2010mean}\fi} and semi-Lagrangian methods~\cite[]{CarliniSilva14,CarliniSilve15}.
More recently, finite element methods (FEM) have been proposed and analysed for rather general MFG with nondifferentiable Hamiltonians in~\cite{osborne2024analysis,osborne2024erratum} for the steady-state case, and in~\cite{osborne2023finite}, for the time-dependent case.
The first optimal rates of convergence of piecewise-affine FEM were shown in~\cite{osborne2024near}, for MFG with regular Hamiltonians.
In these works, the FEM are stabilized to enforce a discrete maximum principle.
See also~\cite{berry2024approximation} for an analysis of convergence rates based on a different approach. 
So far, existing works have treated only the \emph{a priori} error analysis of numerical methods for MFG, i.e.\ studying the convergence in the small-mesh limit and error bounds in terms of the unknown solution and the discretization parameters.
In this paper, we focus on \emph{a~posteriori} error analysis for MFG, which uses the computed numerical solution in order to obtain computable bounds on the error. We refer the reader to the textbook~\cite{verfurth2013posteriori} for an introduction to \emph{a posteriori} error analysis and its applications.
A key motivation is that \emph{a~posteriori} error estimators are essential ingredients in adaptive FEM for guiding the mesh refinements.

To give a summary of our main results, we focus here on MFG systems of the form~\eqref{sys} where the Hamiltonian $H$ is $C^{1,1}$ with respect to $p$, where $F$ is Lipschitz continuous and strongly monotone on a suitable space, and where the datum $G$ is nonnegative in the sense of distributions; see Section~\ref{sec:Notation&Setting} below for details. 
Under these assumptions, the weak formulation of the MFG system is well-posed with existence and uniqueness of the solution, see~Remark~\ref{rem-continuous-pb-basic-properties-C1H} below for further details.
Concerning the numerical methods, our analysis covers a very broad class of stabilizations, which includes, but is not limited to, the ones from \cite{osborne2024analysis,osborne2023finite,osborne2024near}. 
Indeed, we treat abstract stabilizations that are only required to be linear with respect to discrete test functions and enforce nonnegativity of the density approximation $m_\T$, see Section~\ref{sec:stablized-fem} below. This allows the analysis to cover many of the common stabilization techniques in the literature that enforce a discrete maximum principle, such as upwinding {\ifJ\cite[]{tabata1977,baba1981conservation}\else\cite[]{baba1981conservation,tabata1977}\fi}, flux-limiting \cite[]{burman2002nonlinear}, edge-averaging \cite[]{xu1999monotone}, edge-based nonlinear diffusion \cite[]{barrenechea2017edge}, and algebraic flux correction \cite[]{barrenechea2018unified}.
In particular, our analysis allows for nonlinear stabilizations.
For a given computational mesh $\T$ and numerical approximation $(u_\T,m_\T)\in \left[V(\T)\right]^2$, where $V(\T)$ is the $H^1_0$-conforming P1 FEM space associated to $\T$, we derive an estimator $\eta(u_\T,m_\T)$ that combines standard residual estimators with some additional terms coming from the stabilization in the FEM.
Our main results, given in Theorems~\ref{thm:error_upper_bound} and~\ref{theorem:eta-lower-bound}, and Corollary~\ref{corollary:eta-lower-bound} below, are the first \emph{a posteriori} error bounds for MFG, showing that
\begin{align}
&\norm{u-u_\T}_{H^1(\Omega)} + \norm{m-m_\T}_{H^1(\Omega)} \lesssim \eta(u_\T,m_\T),
\label{eq:intro:main-error-bound}
\\ 
 &    \eta(u_\T,m_\T)  \lesssim \norm{m -m_\mathcal{T}}_{H^1(\Omega)} + \norm{u -u_\T}_{H^1(\Omega)}  + \mathrm{oscillation},
\label{eq:intro:main_lower_bound}
\end{align}
where the bound~\eqref{eq:intro:main-error-bound} is subject to the additional condition that $\eta(u_\T,m_\T)\leq \widetilde{\mathcal{R}}_*$ for some problem-dependent constant $\widetilde{\mathcal{R}}_*>0$.
This smallness condition is due to the nonlinearity and noncoercivity of the problem.
The oscillation terms that are alluded to in~\eqref{eq:intro:main_lower_bound} are made precise in Section~\ref{sec:general-estimates} below; roughly speaking, these terms relate to the approximability properties of the problem data appearing in the HJB and KFP equations.
The bounds~\eqref{eq:intro:main-error-bound} and~\eqref{eq:intro:main_lower_bound} show that the estimator is \emph{reliable} and \emph{efficient}, so that, up to data oscillation terms and generic constants, it is equivalent to the error.
Our analysis is founded on original analysis of the MFG system~\eqref{sys} in the form of a local equivalence result between the $H^1$-norm of the error and the $H^{-1}$-norm of the residual, see Theorem~\ref{thm:error-res-equiv} below.

In this paper, we also contribute towards the wider analysis of error estimators for stabilized FEM.
It is well-known that, even for linear PDE, adding stabilization to a FEM leads to a loss of Galerkin orthogonality of the discrete approximations, which in turn leads to challenges in attempting to localize the estimators. This issue is of wider interest beyond MFG.
In~\cite{TobiskaVerfurth2015}, \emph{a posteriori} error estimators of residual-type were obtained for several examples of stabilizations, where they showed that in some cases, one can avoid additional terms in the estimators coming from the stabilization thus leading to a final estimator that is the same as for the (unstabilized) Galerkin method.
Note however that the stabilizations treated in~\cite{TobiskaVerfurth2015} typically do not lead to discrete maximum principles and are thus not considered here.
Nevertheless, this motivates our analysis in Section~\ref{sec:affine_preserving_stab}, where we turn from the very general class of stabilizations of Section~\ref{sec:general-estimates} to a more restricted class with additional structure, the most important one being \emph{patchwise affine preservation}, see~\ref{H:stabilization_lin_preserving} in Section~\ref{sec:affine_preserving_stab} below.
Note that affine-preserving stabilizations have been considered in the literature {\ifJ\cite[]{barrenechea2017edge,barrenechea2018unified,barrenechea2024finite}\else\cite[]{barrenechea2017edge,barrenechea2024finite,barrenechea2018unified}\fi}, many of which are nonlinear stabilizations.
Our key result is that the stabilization-related terms that enter~$\eta(u_\T,m_\T)$ are bounded by the standard jump estimators, see Theorem~\ref{thm:stab_jump_bound} below.
Therefore, the stabilization-related terms can be removed, and the standard residual estimators are reliable and efficient, with the further advantage of being localizable.
The benefits include simplifications in the computation of the error estimators and in the implementation of adaptive mesh-refinements.

We present numerical experiments that support our theoretical results and also test the performance of the estimators when relaxing some of the assumptions used in the analysis, including several examples of challenging MFG for which the exact solution is not known analytically.
The experiments demonstrate the improvements in computational efficiency and accuracy when using the estimators in adaptive computations for problems on nonconvex domains with solutions with limited regularity.

The outline of this paper is as follows. In Section \ref{sec:Notation&Setting}, we outline the basic notation of this paper along with the regularity requirements on the MFG system for our estimates. 
In Section \ref{sec:continous-equivalence}, we present the equivalence result between residual and error in the continous setting. 
The class of stabilized FEMs for the MFG system are formulated in Section \ref{sec:stablized-fem}. The \textit{a posteriori} error bounds are proven for general stabilizations in Sections~\ref{sec:general-estimates}. 
Section~\ref{sec:affine_preserving_stab} treats the analysis of estimators for affine-preserving stabilizations.
Numerical experiments are presented in Section \ref{sec:numerics}.

\section{Notation and Setting}\label{sec:Notation&Setting}

\paragraph{Basic notation.} We denote $\mathbb{N}\coloneqq \{1,2,3,\cdots\}$.
For a Lebesgue measurable set $\omega\subset \R^\dim$, $\dim\in\N$, we let $|\omega|_\dim$ denote its Lebesgue measure.
The inner products on $L^2(\omega)$ and $L^2(\omega;\R^\dim)$ are denoted commonly by~$(\cdot,\cdot)_{\omega}$, with induced norm denoted by $\norm{\cdot}_\omega$; there will be no risk of confusion as the two cases will be distinguished by the arguments.
We equip the space  $L^{\infty}(\omega;\mathbb{R}^{d\times d})$ of bounded matrix-valued functions on $\omega$ with the essential supremum norm $\|\cdot\|_{L^{\infty}(\omega;\mathbb{R}^{d\times d})}$ that is induced by the Frobenius norm on $\mathbb{R}^{d\times d}$.

Let $\Omega$ be a bounded, connected, open subset of $\mathbb{R}^d$, $d\in\N$, with Lipschitz boundary $\partial \Omega$. We will also require that $\Omega$ be polytopal in Section~\ref{sec:stablized-fem} below.
Let $H^1(\Omega)$ denote the Sobolev space which consists of functions in  $L^2(\Omega)$ that have first-order weak partial derivatives also in $L^2(\Omega)$. We let $H^1_0(\Omega)$ denote the closure of $C_{0}^{\infty}(\Omega)$, the space of smooth compactly supported functions on $\Omega$, in the norm on $H^1(\Omega)$, and we let $H^{-1}(\Omega)$ denote the space of continuous linear functionals on $H_0^1(\Omega)$. 
The canonical norm on $H^{-1}(\Omega)$ is defined by
\begin{equation}
\norm{\Phi}_{H^{-1}(\Omega)}\coloneqq \sup_{\substack{v\in H^1_0(\Omega)\\\norm{\nabla v}_\Omega =1}} \langle \Phi, v\rangle_{H^{-1}\times H^1_0} \quad \forall\Phi \in H^{-1}(\Omega),
\end{equation}
where $\langle\cdot,\cdot\rangle_{H^{-1}\times H^1_0}$ denotes the duality pairing between $H^{-1}(\Omega)$ and $H^1_0(\Omega)$.

\paragraph{Notation for inequalities.}
In order to avoid the proliferation of generic constants, in the following, we write $a\lesssim b$ for real numbers $a$ and $b$ if $a\leq C b$ for some constant $C$ that may depend on the problem data and the shape-regularity parameter of the computational mesh $\T$ appearing below, but is otherwise independent of the mesh $\T$, the mesh-size, and the numerical solution. Throughout this work we will regularly specify the particular dependencies of the hidden constants.

\paragraph{Problem data.}

We make the following assumptions on the data appearing in \eqref{sys}. 
The diffusion $\nu>0$ is assumed to be constant. We assume that the source term $G\in H^{-1}(\Omega)$ is of the form $G=g_0-\nabla {\cdot}g_1$ with $g_0\in L^{q/2}(\Omega)$ and $g_1\in L^q(\Omega;\mathbb{R}^d)$ for some $q>d$, and that $G\in H^{-1}(\Omega)$ is nonnegative in the distributional sense, i.e.\ 
$\langle G, \phi\rangle_{H^{-1}\times H_0^1} \geq 0$ for all functions $\phi\in H_0^1(\Omega)$ that are nonnegative a.e.\ in~$\Omega$. 
We suppose that the coupling operator $F$ is a Lipschitz continuous operator from some given Hilbert space $\mathcal{H}$ into $H^{-1}(\Omega)$:  
\begin{equation} 
\|F[w]-F[v]\|_{H^{-1}(\Omega)}\leq L_F\|w-v\|_{\mathcal{H}} \quad\forall w,\,v\in  \mathcal{H},\label{F2}  
\end{equation}
for some constant $L_F\geq 0$.
We assume here that the space $\mathcal{H}$ is such that $H^1_0(\Omega)$ is continuously and compactly embedded into $\mathcal{H}$. For example, $\mathcal{H}=L^2(\Omega)$ is a typical situation that is included in the analysis.
We also assume that $F$ is \emph{strongly monotone} with respect to the given Hilbert space $\mathcal{H}$, at least when $F$ is restricted to the smaller space $H^1_0(\Omega)$. In other words, we assume that there exists a constant $c_F>0$ such that
\begin{equation}\label{strong-mono-F-C1H}
c_F \norm{w-v}_{\mathcal{H}}^2 \leq \langle {F}[w]-{F}[v], w-v\rangle_{H^{-1}\times H_0^1}\quad \forall w,v\in H_0^1(\Omega).
\end{equation}
Note that it is natural to restrict the arguments of $F$ in~\eqref{strong-mono-F-C1H} to the smaller space~$H^1_0(\Omega)$ owing to the fact that $F$ has images in~$H^{-1}(\Omega)$.
Several examples of operators $F$ that satisfy the above conditions are given in \cite[Example~1]{osborne2024analysis}. 
Observe that $F$ is allowed to be a \emph{nonlocal} operator, thus allowing for applications with global interactions between players of the game via the cost term. 
We also remark from the onset that whereas we allow distributional $F$ and $G$ in the analysis of the continuous problem in Section~\ref{sec:continous-equivalence}, we will make stronger assumptions in Sections~\ref{sec:general-estimates} and~\ref{sec:affine_preserving_stab} when dealing with the discrete estimators, namely that $F$ maps into $L^2(\Omega)$ and that $G\in L^2(\Omega)$, to avoid some unnecessary technicalities.
 
In applications, the Hamiltonian $H$ is usually specified in terms of the controlled drift and running cost of the associated optimal control problem in the MFG. Therefore, we assume that
\begin{equation}\label{Hamiltonian}
H(x,p)\coloneqq\sup_{\alpha\in\mathcal{A}}\left(b(x,\alpha)\cdot p-f(x,\alpha)\right)\quad \forall (x,p)\in\overline{\Omega}\times\mathbb{R}^d,
\end{equation} 
where the set $\mathcal{A}$ is the set of controls and is assumed to be a compact metric space, and where both the control-dependent drift $b:\overline{\Omega}\times\mathcal{A}\to\mathbb{R}^d$ and the control-dependent component of the running cost $f:\overline{\Omega}\times\mathcal{A}\to\mathbb{R}$ are assumed to be uniformly continuous.
It follows from the above hypotheses that~$H$ satisfies the following bounds 
\begin{subequations}\label{bounds}
\begin{alignat}{2} 
|H(x,p)|&\leq C_H\left(|p|+1\right) & \quad &\forall  (x,p)\in\overline{\Omega}\times\mathbb{R}^d,\\ 
|H(x,p)-H(x,q)|&\leq L_H|p-q|&&\forall(x,p,q)\in\overline{\Omega}\times\mathbb{R}^d\times\mathbb{R}^d,\label{bounds:lipschitz}
\end{alignat} 
\end{subequations}
with
$C_H\coloneqq \max\left\{\|b\|_{C(\overline{\Omega}\times\mathcal{A};\mathbb{R}^d)},\|f\|_{C(\overline{\Omega}\times\mathcal{A})}\right\}$ and $ L_H\coloneqq\|b\|_{C(\overline{\Omega}\times\mathcal{A};\mathbb{R}^d)}.$ Note that $H$ is also convex w.r.t.\ $p$ uniformly in $x\in\overline{\Omega}$.
We assume that $H$ is continuously differentiable w.r.t.\ $p$ with $\frac{\partial H}{\partial p}\colon \Omega\times\mathbb{R}^d\rightarrow \mathbb{R}^d$ continuous. 
{The bound~\eqref{bounds:lipschitz} implies that $\frac{\partial H}{\partial p}$ is in addition uniformly bounded with}
\begin{equation}\label{h-deriv-linf-bound}
\begin{aligned}
\left|\frac{\partial H}{\partial p}(x,p)\right|\leq L_H &&& \forall (x,p)\in \Omega\times\mathbb{R}^d.
\end{aligned}
\end{equation}
We also suppose that
\begin{equation}\label{eq:Hp_bound}
\begin{aligned}
\left|\frac{\partial H}{\partial p}(x,p) - \frac{\partial H}{\partial p}(x,q)\right|\leq L_{H_p}|p-q| &&& \forall (x,p,q)\in\Omega\times\mathbb{R}^d\times\mathbb{R}^d,
\end{aligned}
\end{equation}
for some constant $L_{H_p}\geq 0$, so $\frac{\partial H}{\partial p}$ is Lipschitz continuous w.r.t.\ $p$, uniformly in $x\in\Omega$.
We shall also require some regularity of $\frac{\partial H}{\partial p}$ w.r.t its $x$-dependence, in particular, we suppose that $x\mapsto \frac{\partial H}{\partial p}(x,p)$ is in $H^1(\Omega;\R^\dim)$ for each $p\in \R^\dim$.
It is clear that the mappings $H^1(\Omega)\ni v\mapsto H(\cdot,\nabla v)\in L^2(\Omega)$ and $H^1(\Omega)\ni v\mapsto \frac{\partial H}{\partial p}(\cdot,\nabla v)\in L^{2}(\Omega;\mathbb{R}^d)$ are Lipschitz continuous, and also that $\frac{\partial H}{\partial p}(\cdot,\nabla v)\in L^\infty(\Omega;\R^\dim)$ for all $v\in H^1(\Omega)$.
We will often abbreviate these compositions by writing instead $H[\nabla v]\coloneqq H(\cdot,\nabla v)$ and $\frac{\partial H}{\partial p}[\nabla v]\coloneqq\frac{\partial H}{\partial p}(\cdot,\nabla v)$ a.e.\ in $\Omega$, for each $v\in H^1(\Omega)$.

In the analysis we will employ the following bound on the linearizations of the Hamiltonians when composed with gradients of functions in Sobolev spaces, c.f.~\cite[Lemma 2.1]{osborne2024near}.
\begin{lemma}\label{lemma-semismooth-tech-result}
For any $\epsilon>0$, there exists a $R>0$, depending only on $\epsilon$, $\Omega$, $L_H$ and $L_{H_p}$, such that 
\begin{equation}\label{semismooth-bound-general}
\left\|H[\nabla v] - H[\nabla w] - \frac{\partial H}{\partial p}[\nabla w]\cdot \nabla (v-w)\right\|_{H^{-1}(\Omega)} \leq \epsilon\|v - w\|_{H^1(\Omega)},
\end{equation}
whenever $v,w\in H^1(\Omega)$ satisfy $\|v- w\|_{H^1(\Omega)}\leq R$.
\end{lemma}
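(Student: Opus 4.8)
The plan is to control the pointwise residual
\[
r\coloneqq H[\nabla v]-H[\nabla w]-\tfrac{\partial H}{\partial p}[\nabla w]\cdot\nabla z,\qquad z\coloneqq v-w,
\]
by two complementary pointwise bounds — a quadratic one and a linear one, both valid a.e.\ in $\Omega$ — and then to interpolate between them by truncating $\nabla z$ at a level $\delta$ that is tuned to $\epsilon$. We may assume $L_H>0$ and $L_{H_p}>0$: if $L_H=0$ then $b\equiv 0$ and $H$ is independent of $p$, while if $L_{H_p}=0$ then $p\mapsto H(x,p)$ is affine; in either case $r\equiv 0$ and there is nothing to prove.

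Since $p\mapsto H(x,p)$ is differentiable with $\tfrac{\partial H}{\partial p}(x,\cdot)$ Lipschitz of constant $L_{H_p}$ uniformly in $x$ by~\eqref{eq:Hp_bound}, Taylor's theorem in integral form gives, for a.e.\ $x\in\Omega$,
\[
\abs{r(x)}=\left|\int_0^1\Big(\tfrac{\partial H}{\partial p}(x,\nabla w(x)+t\,\nabla z(x))-\tfrac{\partial H}{\partial p}(x,\nabla w(x))\Big)\cdot\nabla z(x)\,\mathrm{d}t\right|\leq\tfrac{L_{H_p}}{2}\abs{\nabla z(x)}^2.
\]
On the other hand, the Lipschitz bound~\eqref{bounds:lipschitz} on $H$ together with the $L^\infty$-bound~\eqref{h-deriv-linf-bound} on $\tfrac{\partial H}{\partial p}$ gives $\abs{r(x)}\leq 2L_H\abs{\nabla z(x)}$ for a.e.\ $x\in\Omega$; in particular $r\in L^2(\Omega)\subset H^{-1}(\Omega)$, so the left-hand side of~\eqref{semismooth-bound-general} is well-defined.

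Now fix $\delta>0$ (to be chosen) and split $\Omega=A_\delta\cup B_\delta$ with $A_\delta\coloneqq\{x\in\Omega:\abs{\nabla z(x)}\leq\delta\}$, writing $\mathbf{1}_S$ for the indicator of a set $S$. On $A_\delta$ the quadratic bound yields $\abs{r}\,\mathbf{1}_{A_\delta}\leq\tfrac{L_{H_p}\delta}{2}\abs{\nabla z}$, so, using $\norm{\phi}_{H^{-1}(\Omega)}\leq C_\Omega\norm{\phi}_\Omega$ for $\phi\in L^2(\Omega)$ with $C_\Omega$ the Poincaré constant of $\Omega$, we get $\norm{r\,\mathbf{1}_{A_\delta}}_{H^{-1}(\Omega)}\leq C_\Omega\tfrac{L_{H_p}\delta}{2}\norm{\nabla z}_\Omega\leq C_\Omega\tfrac{L_{H_p}\delta}{2}\norm{z}_{H^1(\Omega)}$, which is at most $\tfrac{\epsilon}{2}\norm{z}_{H^1(\Omega)}$ once $\delta\coloneqq\epsilon/(C_\Omega L_{H_p})$; this $\delta$ depends only on $\epsilon$, $\Omega$, $L_{H_p}$. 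On $B_\delta$ we use instead the linear bound $\abs{r}\,\mathbf{1}_{B_\delta}\leq 2L_H\abs{\nabla z}\,\mathbf{1}_{B_\delta}$ and the continuous embedding $L^{r_0}(\Omega)\hookrightarrow H^{-1}(\Omega)$, valid for some $r_0\in(1,2)$ depending only on $\dim$ (for $\dim\geq 3$ one may take $r_0=2\dim/(\dim+2)$). Writing $C_S$ for the embedding constant and applying Hölder's inequality with exponents $2/r_0$ and $2/(2-r_0)$ followed by Chebyshev's inequality $\abs{B_\delta}\leq\delta^{-2}\norm{\nabla z}_\Omega^2$,
\[
\norm{r\,\mathbf{1}_{B_\delta}}_{H^{-1}(\Omega)}\leq 2L_H C_S\Big(\int_{B_\delta}\abs{\nabla z}^{r_0}\Big)^{1/r_0}\leq 2L_H C_S\,\norm{\nabla z}_\Omega\,\abs{B_\delta}^{\frac{2-r_0}{2r_0}}\leq 2L_H C_S\Big(\tfrac{\norm{z}_{H^1(\Omega)}}{\delta}\Big)^{\frac{2-r_0}{r_0}}\norm{z}_{H^1(\Omega)}.
\]
Since $\delta$ has already been fixed and the exponent $(2-r_0)/r_0$ is strictly positive, we may now choose $R>0$ small enough that $2L_H C_S(R/\delta)^{(2-r_0)/r_0}\leq\epsilon/2$; then $R$ depends only on $\epsilon$, $\Omega$, $L_H$, $L_{H_p}$, and whenever $\norm{v-w}_{H^1(\Omega)}\leq R$ the triangle inequality $\norm{r}_{H^{-1}(\Omega)}\leq\norm{r\,\mathbf{1}_{A_\delta}}_{H^{-1}(\Omega)}+\norm{r\,\mathbf{1}_{B_\delta}}_{H^{-1}(\Omega)}$ yields~\eqref{semismooth-bound-general}.

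The crux — and the reason the estimate is only local in $\norm{v-w}_{H^1(\Omega)}$ rather than global — is the integrability gap: the pointwise bound $\abs{r}\lesssim\abs{\nabla z}^2$ would formally give $\norm{r}_{L^{r_0}(\Omega)}\lesssim\norm{\nabla z}_{L^{2r_0}(\Omega)}^2$, but $2r_0>2$ when $\dim\geq 2$ whereas $\nabla z$ is only known to lie in $L^2$. Truncating at level $\delta$ circumvents this by using quadratic control only on $\{\abs{\nabla z}\leq\delta\}$ and the linear bounds on the remaining, small-measure set, at the price of the smallness hypothesis. The only delicate point is the order of the choices: $\delta$ must be fixed first, from the small-gradient contribution, and $R$ chosen afterwards, from the large-gradient one, so that both ultimately depend only on $\epsilon$, $\Omega$, $L_H$ and $L_{H_p}$ as claimed.
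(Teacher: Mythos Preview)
Your proof is correct and self-contained. The paper itself does not prove this lemma but cites \cite[Lemma~2.1]{osborne2024near}, so there is no proof in the paper to compare against directly. Your argument---truncating at level $\delta$, using the quadratic Taylor bound on $\{|\nabla z|\leq\delta\}$ via the $L^2\hookrightarrow H^{-1}$ embedding, and the linear bound on the complement via the subcritical $L^{r_0}\hookrightarrow H^{-1}$ embedding together with Chebyshev's inequality---is a clean and standard route to such semismoothness-type estimates, and the dependencies of $\delta$ and $R$ are tracked correctly. The handling of the degenerate cases $L_H=0$ and $L_{H_p}=0$ is also fine. One minor remark: the constant $C_S$ and the exponent $r_0$ depend on the dimension $d$, but since $d$ is determined by $\Omega$ this is consistent with the claimed dependencies.
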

In fact, see also~\cite[Theorem~13]{SmearsSuli2014} which gives a major generalization of Lemma~\ref{lemma-semismooth-tech-result}, in particular by showing the semismoothness in function spaces of HJB operators in the case of Lipschitz but possibly nondifferentiable Hamiltonians.

\section{Continuous problem and equivalence between error and residual}\label{sec:continous-equivalence}
Our first contribution toward deriving computable a posteriori error bounds is to prove the equivalence between norms of the error and of the residuals, detailed in Theorem \ref{thm:error-res-equiv} below.
This equivalence is nontrivial due to the lack of coercivity of the underlying PDE, the nonlinear coupling, and possible nondifferentiability of the nonlinear coupling $F[m]$.
To establish this equivalence, we first define the weak form of \eqref{sys}, which is to find $(u,m)\in H_0^1(\Omega)\times H_0^1(\Omega)$ which satisfies 
\begin{subequations}\label{weakform-C1H}
\begin{gather}
\int_{\Omega}\nu\nabla u{\cdot}\nabla \psi+H[\nabla u]\psi \, \mathrm{d}x=\langle F[m],\psi\rangle_{H^{-1}\times H_0^1}, \label{weakform1-C1H}\\ 
\int_{\Omega}\nu\nabla m{\cdot}\nabla \phi+m\frac{\partial H}{\partial p}[\nabla u]{\cdot}\nabla \phi \,\mathrm{d}x=\langle  G,\phi\rangle_{H^{-1}\times H_0^1},\label{weakform2-C1H} 
\end{gather} 
\end{subequations}
for all $\psi,\phi\in H_0^1(\Omega)$. 

\begin{remark}[Existence and uniqueness of the solution]\label{rem-continuous-pb-basic-properties-C1H}
Under the above assumptions on the data, the existence and uniqueness of a solution $(u,m)$ of~\eqref{weakform-C1H} is known from~\cite[Theorems~3.3 \& 3.4]{osborne2024analysis}, see also~\cite{osborne2024erratum}. Note however that if some of these assumptions are removed, then uniqueness of the solution of~\eqref{weakform-C1H} does not generally hold, see e.g.~\cite[Example~3.1]{osborne2025regularization} for an example with multiple solutions where $F$ is allowed to be nonmonotone.
Moreover, under the above assumptions, we have the \emph{a priori} bounds $\|m\|_{H^1(\Omega)} \lesssim \|G\|_{H^{-1}(\Omega)}$ and $\|u\|_{H^1(\Omega)} \lesssim 1+\|G\|_{H^{-1}(\Omega)}+\|f\|_{C({\overline{\Omega}\times\mathcal{A}})}$. The density function $m$ is in addition nonnegative a.e.\ in $\Omega$.
Furthermore, recalling the hypothesis that $G=g_0-\nabla {\cdot}g_1$ with $g_0\in L^{q/2}(\Omega)$ and $g_1\in L^q(\Omega;\mathbb{R}^d)$ for some $q>d$.
It then follows that $m\in L^\infty(\Omega)$ with
\begin{equation}\label{m-ess-bound}
\|m\|_{L^\infty(\Omega)}\leq {\Mfty} \coloneqq C\left(\|G\|_{H^{-1}(\Omega)}+\|g_0\|_{L^{q/2}(\Omega)}+\|g_1\|_{L^q(\Omega;\mathbb{R}^d)}\right),
\end{equation} 
for some constant $C$ depending only on $d,\Omega, L_H,\nu$, $q$ and $L_F$, see~\cite[Theorem 8.15]{gilbarg2015elliptic}.
We will use~\eqref{m-ess-bound} in the analysis that follows.
\end{remark}

\subsection{Local equivalence between error and residual}
Our main result in this section is the local equivalence between the norms of the error, when comparing the exact solution with some general $H^1_0$-conforming approximation, and the dual-norm of the residual. 
In particular, we define the following residual operators: let $\RH,\RF:{H_0^1(\Omega)}\times H_0^1(\Omega)\to H^{-1}(\Omega)$ be given by
\begin{subequations}\label{FEM-Res}
\begin{align}
\langle \RH(\overline{u},\overline{m}),\psi\rangle_{H^{-1}\times H^1_0}&\coloneqq \langle F[\overline{m}],\psi\rangle_{H^{-1}\times H_0^1} - \int_{\Omega}\nu\nabla \overline{u}{\cdot}\nabla \psi+ H[\nabla \overline{u}]\psi \, \mathrm{d}x ,\label{FEM-Res-HJB}
\\
\langle \RF(\overline{u},\overline{m}),\phi\rangle_{H^{-1}\times H^1_0}&\coloneqq \langle G,\phi\rangle_{H^{-1}\times H_0^1} - \int_{\Omega}\nu\nabla \overline{m}{\cdot}\nabla \phi+ \overline{m}\frac{\partial H}{\partial p}[\nabla \overline{u}]{\cdot}\nabla \phi
\, \mathrm{d}x ,\label{FEM-Res-KFP}
\end{align}
\end{subequations}
for all $\overline{u},\overline{m},\psi,\phi\in H_0^1(\Omega)$.
The total residual is then defined by
\begin{equation}\label{eq:FEM-Res_total}
\Res(\overline{u},\overline{m}) \coloneqq  \left(\norm{\RH(\overline{u},\overline{m})}_{H^{-1}(\Omega)}^2+\norm{\RF(\overline{u},\overline{m})}_{H^{-1}(\Omega)}^2\right)^{\frac{1}{2}}.
\end{equation}
The following result on the local equivalence between errors and residuals is the cornerstone of the \emph{a posteriori} error analysis.
\begin{theorem}[Local equivalence between error and residual]\label{thm:error-res-equiv}
There exists a constant $\mathcal{R}_*>0$ such that, for any pair $(\overline{u},\overline{m})\in H^1_0(\Omega)\times H^1_0(\Omega)$ that satisfies 
\begin{equation}\label{eq:error_res_equiv_condition}
\overline{m}\geq 0 \text{ a.e.\ in }\Omega, \quad 
\Res(\overline{u},\overline{m})\leq \mathcal{R}_*,
\end{equation}
then we have the equivalence between errors and residuals
\begin{equation}\label{eq:error_res_equivalence}
\|u-\overline{u}\|_{H^1(\Omega)}+\|m-\overline{m}\|_{H^1(\Omega)}\eqsim\Res(\overline{u},\overline{m}).
\end{equation}
\end{theorem}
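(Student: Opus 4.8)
The plan is to prove separately the two inequalities implicit in~\eqref{eq:error_res_equivalence}. Set $e_u\coloneqq u-\overline{u}$, $e_m\coloneqq m-\overline{m}$, and subtract the residual identities~\eqref{FEM-Res} from the weak formulation~\eqref{weakform-C1H} to obtain the error equations: for all $\psi,\phi\in H^1_0(\Omega)$,
\[
\int_{\Omega}\nu\nabla e_u\cdot\nabla\psi + \bigl(H[\nabla u]-H[\nabla\overline{u}]\bigr)\psi \,\dx = \langle F[m]-F[\overline{m}],\psi\rangle_{H^{-1}\times H^1_0} + \langle \RH(\overline{u},\overline{m}),\psi\rangle_{H^{-1}\times H^1_0},
\]
\[
\int_{\Omega}\nu\nabla e_m\cdot\nabla\phi + \Bigl(m\tfrac{\partial H}{\partial p}[\nabla u]-\overline{m}\tfrac{\partial H}{\partial p}[\nabla\overline{u}]\Bigr)\cdot\nabla\phi \,\dx = \langle \RF(\overline{u},\overline{m}),\phi\rangle_{H^{-1}\times H^1_0}.
\]
The bound $\Res(\overline{u},\overline{m})\lesssim\|e_u\|_{H^1(\Omega)}+\|e_m\|_{H^1(\Omega)}$ is the routine direction and needs neither sign nor smallness conditions: testing these equations with arbitrary $\psi,\phi$ and using the Lipschitz bounds~\eqref{F2}, \eqref{bounds:lipschitz}, \eqref{h-deriv-linf-bound} and~\eqref{eq:Hp_bound}, the continuous embedding $H^1_0(\Omega)\hookrightarrow\mathcal{H}$, and the bound $\|m\|_{L^\infty(\Omega)}\leq\Mfty$ from~\eqref{m-ess-bound} (to handle $m(\tfrac{\partial H}{\partial p}[\nabla u]-\tfrac{\partial H}{\partial p}[\nabla\overline{u}])$ after the splitting $m\tfrac{\partial H}{\partial p}[\nabla u]-\overline{m}\tfrac{\partial H}{\partial p}[\nabla\overline{u}]=e_m\tfrac{\partial H}{\partial p}[\nabla\overline{u}]+m(\tfrac{\partial H}{\partial p}[\nabla u]-\tfrac{\partial H}{\partial p}[\nabla\overline{u}])$) bounds $\|\RH(\overline u,\overline m)\|_{H^{-1}(\Omega)}$ and $\|\RF(\overline u,\overline m)\|_{H^{-1}(\Omega)}$, and one concludes via~\eqref{eq:FEM-Res_total}.

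The reverse inequality is where both hypotheses in~\eqref{eq:error_res_equiv_condition} are used, and its cornerstone is a Lasry--Lions-type monotonicity identity at the level of the residual. Testing the $e_u$-equation with $e_m$ and the $e_m$-equation with $e_u$ and subtracting, the symmetric terms $\nu\int_\Omega\nabla e_u\cdot\nabla e_m\,\dx$ cancel, leaving
\[
\langle F[m]-F[\overline{m}],e_m\rangle_{H^{-1}\times H^1_0} = J - \langle \RH(\overline{u},\overline{m}),e_m\rangle_{H^{-1}\times H^1_0} + \langle \RF(\overline{u},\overline{m}),e_u\rangle_{H^{-1}\times H^1_0},
\]
where, after substituting $e_m=m-\overline m$ and $\nabla e_u=\nabla u-\nabla\overline u$ and regrouping,
\[
J = \int_{\Omega} m\Bigl(H[\nabla u]-H[\nabla\overline u]-\tfrac{\partial H}{\partial p}[\nabla u]\cdot\nabla e_u\Bigr)\dx - \int_{\Omega} \overline m\Bigl(H[\nabla u]-H[\nabla\overline u]-\tfrac{\partial H}{\partial p}[\nabla\overline u]\cdot\nabla e_u\Bigr)\dx .
\]
Convexity of $H(x,\cdot)$ makes the first integrand $\leq0$ and the second $\geq0$ pointwise a.e.; since $m\geq0$ a.e.\ always (Remark~\ref{rem-continuous-pb-basic-properties-C1H}) and $\overline m\geq0$ a.e.\ by hypothesis, we obtain $J\leq0$. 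Combined with the strong monotonicity~\eqref{strong-mono-F-C1H} of $F$ on $H^1_0(\Omega)$, the Cauchy--Schwarz inequality and~\eqref{eq:FEM-Res_total}, this gives the key estimate
\[
c_F\,\|e_m\|_{\mathcal{H}}^2 \;\leq\; \langle F[m]-F[\overline m],e_m\rangle_{H^{-1}\times H^1_0} \;\lesssim\; \Res(\overline u,\overline m)\,\bigl(\|e_u\|_{H^1(\Omega)}+\|e_m\|_{H^1(\Omega)}\bigr).
\]

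To upgrade this to a full $H^1$-error bound I would prove a conditional stability estimate $\|e_u\|_{H^1(\Omega)}+\|e_m\|_{H^1(\Omega)}\lesssim\|e_m\|_{\mathcal{H}}+\Res(\overline u,\overline m)$, valid whenever $\|e_u\|_{H^1(\Omega)}\leq\delta_0$ for a suitable $\delta_0>0$. For the HJB error, Lemma~\ref{lemma-semismooth-tech-result} replaces $H[\nabla u]-H[\nabla\overline u]$ by $\tfrac{\partial H}{\partial p}[\nabla u]\cdot\nabla e_u$ up to an $H^{-1}$-remainder of size $\epsilon\|e_u\|_{H^1(\Omega)}$, and one inverts the linearized HJB operator $v\mapsto-\nu\Delta v+\tfrac{\partial H}{\partial p}[\nabla u]\cdot\nabla v$, which is an isomorphism $H^1_0(\Omega)\to H^{-1}(\Omega)$ because it is the formal adjoint of the linear, invertible Kolmogorov--Fokker--Planck operator appearing in~\eqref{weakform2-C1H}, and hence invertible with a data-dependent constant by the well-posedness of~\eqref{weakform-C1H}; taking $\epsilon$ small and absorbing gives $\|e_u\|_{H^1(\Omega)}\lesssim\|e_m\|_{\mathcal{H}}+\|\RH(\overline u,\overline m)\|_{H^{-1}(\Omega)}$. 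For the KFP error one writes $m\tfrac{\partial H}{\partial p}[\nabla u]-\overline m\tfrac{\partial H}{\partial p}[\nabla\overline u]=e_m\tfrac{\partial H}{\partial p}[\nabla u]+\overline m(\tfrac{\partial H}{\partial p}[\nabla u]-\tfrac{\partial H}{\partial p}[\nabla\overline u])$, inverts the same operator applied to $e_m$, and controls the remainder via~\eqref{eq:Hp_bound}: the part involving $m$ by $\Mfty L_{H_p}\|e_u\|_{H^1(\Omega)}$ using~\eqref{m-ess-bound}, and the part involving $-e_m$ (coming from $\overline m=m-e_m$) by $\|e_m\|_{H^1(\Omega)}$ times a factor that is small when $\|e_u\|_{H^1(\Omega)}$ is, by Hölder's inequality and a Sobolev embedding, so that it can be absorbed; this gives $\|e_m\|_{H^1(\Omega)}\lesssim\|\RF(\overline u,\overline m)\|_{H^{-1}(\Omega)}+\|e_u\|_{H^1(\Omega)}$. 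Substituting these two estimates into the key estimate and using Young's inequality to absorb $\|e_m\|_{\mathcal{H}}$ yields $\|e_m\|_{\mathcal{H}}\lesssim\Res(\overline u,\overline m)$, hence $\|e_u\|_{H^1(\Omega)}+\|e_m\|_{H^1(\Omega)}\lesssim\Res(\overline u,\overline m)$.

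The main obstacle is to remove the conditional hypothesis $\|e_u\|_{H^1(\Omega)}\leq\delta_0$, i.e.\ to show that it follows from $\Res(\overline u,\overline m)\leq\mathcal{R}_*$ for $\mathcal{R}_*$ small; this is precisely where the smallness condition in~\eqref{eq:error_res_equiv_condition} is used, and it is forced by the noncoercivity of the system, which provides no cheap global a priori error bound. I would argue by contradiction: if the reverse inequality failed for every $\mathcal{R}_*>0$, there would be a sequence $(\overline u_k,\overline m_k)$ with $\overline m_k\geq0$, $\Res(\overline u_k,\overline m_k)\to0$, and, by the conditional estimate, $\|u-\overline u_k\|_{H^1(\Omega)}>\delta_0$. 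The residual equations together with the uniform solvability bound for convection--diffusion operators $-\nu\Delta+b\cdot\nabla$ with $L^\infty$ drift of norm at most $L_H$ (this is where~\eqref{h-deriv-linf-bound} enters) furnish a uniform $H^1$-bound on $(\overline u_k,\overline m_k)$; extracting weakly convergent subsequences—strongly convergent in $\mathcal{H}$ and in $L^2(\Omega)$—a Cauchy-type energy argument on the differences of the residual equations, exploiting the Lipschitz bounds~\eqref{bounds:lipschitz} and~\eqref{eq:Hp_bound}, Rellich compactness, and $\Res(\overline u_k,\overline m_k)\to0$, upgrades the convergence to strong convergence in $H^1(\Omega)$, which permits passage to the limit in the residual equations; the limit therefore solves~\eqref{weakform-C1H}, so by the uniqueness recalled in Remark~\ref{rem-continuous-pb-basic-properties-C1H} it equals $(u,m)$, whence $\|u-\overline u_k\|_{H^1(\Omega)}\to0$, a contradiction. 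Fixing $\mathcal{R}_*$ at a value for which the reverse inequality holds completes the proof.
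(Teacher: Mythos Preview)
Your overall architecture mirrors the paper's: the easy direction via Lipschitz bounds (the paper's Lemma~\ref{lemma-E-general-est}), the Lasry--Lions monotonicity identity yielding control of $\|e_m\|_{\mathcal H}$ (Lemma~\ref{lemma-key-L2-bound}), conditional HJB stability via Lemma~\ref{lemma-semismooth-tech-result} (Lemma~\ref{lemma-w-H1-bound}), KFP stability, and a compactness/uniqueness argument to remove the smallness condition (Lemma~\ref{lem:residual_stability}). Most of this is correct and close to the paper's treatment.

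There is, however, a genuine gap in your KFP stability step. You split
\[
m\tfrac{\partial H}{\partial p}[\nabla u]-\overline m\tfrac{\partial H}{\partial p}[\nabla\overline u]=e_m\tfrac{\partial H}{\partial p}[\nabla u]+\overline m\bigl(\tfrac{\partial H}{\partial p}[\nabla u]-\tfrac{\partial H}{\partial p}[\nabla\overline u]\bigr),
\]
then write $\overline m=m-e_m$ and claim that the term $e_m\bigl(\tfrac{\partial H}{\partial p}[\nabla u]-\tfrac{\partial H}{\partial p}[\nabla\overline u]\bigr)$ can be bounded by $\|e_m\|_{H^1(\Omega)}$ times a factor small in $\|e_u\|_{H^1(\Omega)}$ via H\"older and Sobolev. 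This does not work in dimension $d\geq2$: the pairing with $\nabla\phi$ forces you to estimate $\|e_m\,\nabla e_u\|_{\Omega}$ (using~\eqref{eq:Hp_bound}), and with $\nabla e_u$ available only in $L^2(\Omega)$ you would need $e_m\in L^\infty(\Omega)$, which you do not have. The uniform bound $|\tfrac{\partial H}{\partial p}[\nabla u]-\tfrac{\partial H}{\partial p}[\nabla\overline u]|\leq 2L_H$ only gives $2L_H\|e_m\|_{\Omega}$, which is not small and cannot be absorbed without an extraneous structural smallness assumption on $L_H/\nu$.

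The paper avoids this entirely by linearizing the KFP drift at $\overline u$ rather than at $u$: with
\[
m\tfrac{\partial H}{\partial p}[\nabla u]-\overline m\tfrac{\partial H}{\partial p}[\nabla\overline u]=(m-\overline m)\tfrac{\partial H}{\partial p}[\nabla\overline u]+m\bigl(\tfrac{\partial H}{\partial p}[\nabla u]-\tfrac{\partial H}{\partial p}[\nabla\overline u]\bigr),
\]
the first term is absorbed into the invertible operator $v\mapsto-\nu\Delta v-\Div(v\,\tfrac{\partial H}{\partial p}[\nabla\overline u])$ (bounded $L^\infty$ drift by~\eqref{h-deriv-linf-bound}), and the remainder involves only $m$, controlled via $\|m\|_{L^\infty(\Omega)}\leq M_\infty$. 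This yields the \emph{unconditional} bound $\|e_m\|_{H^1(\Omega)}\lesssim\|\RF\|_{H^{-1}(\Omega)}+\|e_u\|_{H^1(\Omega)}$ of Lemma~\ref{lemma-zk-H1-bound}, with no smallness of $\|e_u\|$ needed at this stage. With this correction your argument goes through; the rest of your proof, including the compactness step, is essentially the paper's.
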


We prove Theorem~\ref{thm:error-res-equiv} in the following subsections.

\begin{remark}
It is well-known from {\ifJ\cite{lasry2007mean}\else the seminal work of Lasry \& Lions~\cite{lasry2007mean}\fi} that the nonnegativity of the density plays an important role in the proof of uniqueness of the solution of the MFG system in the setting of strictly monotone couplings. It is therefore not surprising that Theorem~\ref{thm:error-res-equiv} also requires nonnegativity of the density approximations in the first condition in~\eqref{eq:error_res_equiv_condition}.
\end{remark}

\subsection{Stability of HJB and FP equations}
In this section we establish stability results associated with the continuous HJB and KFP equations of the continuous problem \eqref{weakform-C1H}.
The following result expresses the continuous dependence of the residuals on the errors.
\begin{lemma}\label{lemma-E-general-est}
For all $\overline{u},\overline{m}\in {H_0^1(\Omega)}$, we have
\begin{subequations}\label{eq:dual-norm-residuals}
\begin{align} 
    \|\RH(\overline{u},\overline{m})\|_{H^{-1}(\Omega)}&\lesssim \|m-\overline{m}\|_{\mathcal{H}}+\|u-\overline{u}\|_{H^1(\Omega)},\label{eq:dual_norm_res_R1}
    \\ \|\RF(\overline{u},\overline{m})\|_{H^{-1}(\Omega)}&\lesssim \|m-\overline{m}\|_{H^1(\Omega)}+{\|u-\overline{u}\|_{H^1(\Omega)}},\label{eq:dual_norm_res_R2}  
\end{align}
\end{subequations}
where the hidden constants depend only on $\Omega$, $L_F$, $L_{H_p}$, $L_H$, $\nu$, {and $\Mfty$.} 
\end{lemma}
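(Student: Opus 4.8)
The plan is to estimate each residual directly from its definition in~\eqref{FEM-Res}, by subtracting the weak formulation~\eqref{weakform-C1H} satisfied by the exact solution $(u,m)$ and then bounding the resulting differences termwise using the Lipschitz and boundedness hypotheses on the data. Write $e_u \coloneqq u - \overline{u}$ and $e_m \coloneqq m - \overline{m}$ throughout.

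For~\eqref{eq:dual_norm_res_R1}: using~\eqref{weakform1-C1H} to rewrite $\langle F[m],\psi\rangle$, we get for any $\psi\in H^1_0(\Omega)$ with $\norm{\nabla\psi}_\Omega = 1$,
\begin{equation*}
\langle \RH(\overline{u},\overline{m}),\psi\rangle_{H^{-1}\times H^1_0}
= \langle F[\overline{m}] - F[m],\psi\rangle_{H^{-1}\times H^1_0}
+ \int_\Omega \nu\nabla e_u\cdot\nabla\psi + \bigl(H[\nabla u] - H[\nabla\overline{u}]\bigr)\psi\,\dx .
\end{equation*}
The first term is bounded by $L_F\norm{e_m}_{\mathcal{H}}$ using the Lipschitz property~\eqref{F2}. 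For the second term we apply Cauchy--Schwarz to control $\nu\norm{\nabla e_u}_\Omega$, and for the third term we use the Lipschitz bound~\eqref{bounds:lipschitz} together with the Poincaré inequality on $H^1_0(\Omega)$ to bound $\norm{H[\nabla u] - H[\nabla\overline{u}]}_\Omega \leq L_H\norm{\nabla e_u}_\Omega$ against $\norm{\psi}_\Omega \lesssim \norm{\nabla\psi}_\Omega = 1$. Taking the supremum over $\psi$ gives~\eqref{eq:dual_norm_res_R1} with constant depending on $\Omega$, $L_F$, $L_H$, $\nu$.

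For~\eqref{eq:dual_norm_res_R2}: subtracting~\eqref{weakform2-C1H} from~\eqref{FEM-Res-KFP}, the data term $\langle G,\phi\rangle$ cancels and we are left with
\begin{equation*}
\langle \RF(\overline{u},\overline{m}),\phi\rangle_{H^{-1}\times H^1_0}
= \int_\Omega \nu\nabla e_m\cdot\nabla\phi
+ \Bigl(m\,\tfrac{\partial H}{\partial p}[\nabla u] - \overline{m}\,\tfrac{\partial H}{\partial p}[\nabla\overline{u}]\Bigr)\cdot\nabla\phi\,\dx .
\end{equation*}
The diffusion term gives $\nu\norm{\nabla e_m}_\Omega$ by Cauchy--Schwarz. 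For the convective term, the standard trick is to insert the mixed quantity $\overline{m}\,\tfrac{\partial H}{\partial p}[\nabla u]$ and split:
\begin{equation*}
m\,\tfrac{\partial H}{\partial p}[\nabla u] - \overline{m}\,\tfrac{\partial H}{\partial p}[\nabla\overline{u}]
= e_m\,\tfrac{\partial H}{\partial p}[\nabla u] + \overline{m}\Bigl(\tfrac{\partial H}{\partial p}[\nabla u] - \tfrac{\partial H}{\partial p}[\nabla\overline{u}]\Bigr).
\end{equation*}
In the first piece, $\tfrac{\partial H}{\partial p}[\nabla u]$ is bounded by $L_H$ in $L^\infty$ by~\eqref{h-deriv-linf-bound}, so it contributes $\lesssim L_H\norm{e_m}_\Omega\norm{\nabla\phi}_\Omega$. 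The second piece is where the essential bound on $\overline{m}$ is needed — and here lies the main subtlety: $\overline{m}$ is a general $H^1_0$ function, not the exact solution, so the bound~\eqref{m-ess-bound} does not directly apply to it. The resolution is to write $\overline{m} = m - e_m$, so that $\overline{m}\bigl(\tfrac{\partial H}{\partial p}[\nabla u]-\tfrac{\partial H}{\partial p}[\nabla\overline{u}]\bigr) = m\bigl(\cdots\bigr) - e_m\bigl(\cdots\bigr)$; now $\norm{m}_{L^\infty(\Omega)}\leq \Mfty$ via~\eqref{m-ess-bound}, and the factor $\tfrac{\partial H}{\partial p}[\nabla u]-\tfrac{\partial H}{\partial p}[\nabla\overline{u}]$ is controlled in $L^2$ by $L_{H_p}\norm{\nabla e_u}_\Omega$ using~\eqref{eq:Hp_bound}. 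This yields a contribution $\lesssim \Mfty L_{H_p}\norm{\nabla e_u}_\Omega\norm{\nabla\phi}_\Omega$ from the $m$ term; the leftover $e_m\bigl(\tfrac{\partial H}{\partial p}[\nabla u]-\tfrac{\partial H}{\partial p}[\nabla\overline{u}]\bigr)$ term is a product of two error quantities, so one crude bound (e.g. using $\abs{\tfrac{\partial H}{\partial p}[\nabla u]-\tfrac{\partial H}{\partial p}[\nabla\overline{u}]}\leq 2L_H$ from~\eqref{h-deriv-linf-bound}) gives $\lesssim L_H\norm{e_m}_\Omega\norm{\nabla\phi}_\Omega$, which is absorbed into the already-present $\norm{m-\overline{m}}_{H^1(\Omega)}$ term. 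Collecting everything, using Poincaré to pass from $\norm{e_m}_\Omega$ to $\norm{e_m}_{H^1(\Omega)}$ and $\norm{\nabla e_u}_\Omega$ to $\norm{e_u}_{H^1(\Omega)}$, and taking the supremum over $\phi$ with $\norm{\nabla\phi}_\Omega = 1$, gives~\eqref{eq:dual_norm_res_R2} with hidden constant depending on $\Omega$, $L_{H_p}$, $L_H$, $\nu$, and $\Mfty$.

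I expect the only genuine obstacle to be the handling of the convective term $\overline{m}\tfrac{\partial H}{\partial p}[\nabla\overline{u}]$ just described: because the a posteriori setting forbids assuming any $L^\infty$ control on the approximation $\overline{m}$, one cannot bound this term naively and must route the $L^\infty$ bound through the exact density $m$ as above, at the cost of an extra error-times-error term that fortunately is harmless. Everything else is a routine sequence of Cauchy--Schwarz, Poincaré, and the stated Lipschitz/boundedness hypotheses on $F$, $H$, and $\tfrac{\partial H}{\partial p}$.
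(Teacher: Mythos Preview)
Your proof is correct and follows essentially the same strategy as the paper. The only difference is in the splitting of the convective term for~\eqref{eq:dual_norm_res_R2}: the paper inserts $m\tfrac{\partial H}{\partial p}[\nabla\overline{u}]$ directly, obtaining $(m-\overline{m})\tfrac{\partial H}{\partial p}[\nabla\overline{u}] + m\bigl(\tfrac{\partial H}{\partial p}[\nabla u]-\tfrac{\partial H}{\partial p}[\nabla\overline{u}]\bigr)$, which immediately places the $L^\infty$-bounded factor $m$ next to the Lipschitz difference and avoids your detour through $\overline{m}=m-e_m$ and the resulting extra error-times-error term.
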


\begin{proof}
Since $(u,m)$ solves~\eqref{weakform-C1H}, we have for all $\psi\in {H_0^1(\Omega)}$
\begin{multline}\label{res-est-1}
\langle \RH(\overline{u},\overline{m}),\psi\rangle_{H^{-1}\times H_0^1}=\langle F[\overline{m}]-F[m],\psi\rangle_{H^{-1}\times H_0^1}
\\+\int_{\Omega}\nu\nabla (u-\overline{u}){\cdot}\nabla \psi + (H[\nabla u]-H[\nabla \overline{u}])\psi\,\mathrm{d}x.
\end{multline}
The bound~\eqref{eq:dual_norm_res_R1} then follows from the Lipschitz continuity of $F$ in~\eqref{F2} and the Lipschitz continuity of $H$ in~\eqref{bounds:lipschitz}.
Next, we have
\begin{multline}\label{res-est-2}
\langle \RF(\overline{u},\overline{m}),\phi\rangle_{H^{-1}\times {H_0^1}}=\int_{\Omega}\nu\nabla (m-\overline{m}){\cdot}\nabla \phi+(m-\overline{m})\frac{\partial H}{\partial p}[\nabla \overline{u}]{\cdot}\nabla \phi\,\mathrm{d}x
\\+\int_{\Omega}m\left(\frac{\partial H}{\partial p}[\nabla u] - \frac{\partial H}{\partial p}[\nabla \overline{u}]\right){\cdot}\nabla \phi\,\mathrm{d}x ,
\end{multline}
for all $\phi\in {H_0^1(\Omega)}$.
Then, using the bound $\norm{\frac{\partial H}{\partial p}[\nabla \overline{u}]}_{L^\infty(\Omega,\R^\dim)}\leq L_H$ which follows from~\eqref{h-deriv-linf-bound}, we see that
\begin{equation}\label{eq:R2_bound_2}
\sup_{\substack{\phi\in {H_0^1(\Omega)}\\ \|\nabla\phi\|_\Omega =1}}
\left[\int_\Omega (m-\overline{m})\frac{\partial H}{\partial p}[\nabla \overline{u}]{\cdot}\nabla \phi\,\mathrm{d}x \right] \leq L_H \norm{m-\overline{m}}_\Omega .
\end{equation}
The Lipschitz continuity of $\frac{\partial H}{\partial p}$ in~\eqref{eq:Hp_bound} and the $L^\infty$-bound on $m$ in~\eqref{m-ess-bound} imply that 
\begin{equation}\label{eq:R2_bound_3}
\sup_{\substack{\phi\in {H_0^1(\Omega)}\\ \|\nabla\phi\|_\Omega =1}} \left[ \int_{\Omega}m\left(\frac{\partial H}{\partial p}[\nabla u] - \frac{\partial H}{\partial p}[\nabla \overline{u}]\right)\cdot\nabla \phi\,\mathrm{d}x 
\right]
\leq L_{H_p} \Mfty\norm{u-\overline{u}}_{H^1(\Omega)}.
\end{equation}
Combining~\eqref{res-est-2} with~\eqref{eq:R2_bound_2} and~\eqref{eq:R2_bound_3}, we then obtain~\eqref{eq:dual_norm_res_R2}.
\end{proof}
{We now turn to the (local) stability of the continuous HJB equation~\eqref{weakform1-C1H}.}
\begin{lemma}[Stability of continous HJB equation]\label{lemma-w-H1-bound}
There exists a constant $R>0$ such that 
\begin{equation}\label{val-func-approx-err}
\|u-\overline{u}\|_{H^1(\Omega)}\lesssim \|\RH(\overline{u},\overline{m})\|_{H^{-1}(\Omega)}+\|m-\overline{m}\|_{\mathcal{H}},
\end{equation}
for any $\overline{m}\in {H_0^1(\Omega)}$, and for any $\overline{u}\in {H_0^1(\Omega)}$ satisfying $\|\overline{u}-u\|_{H^1(\Omega)}\leq R$. 
The hidden constant in~\eqref{val-func-approx-err} depends only on $\Omega$, $\dim$, $\nu$, $L_H$ and $L_F$.
\end{lemma}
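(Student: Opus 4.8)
The plan is to use the semismoothness of the HJB operator (Lemma~\ref{lemma-semismooth-tech-result}) to reduce the estimate to a coercivity-type bound for a linearized convection-diffusion operator, which is available because the diffusion $\nu>0$ is coercive on $H^1_0(\Omega)$. Let me write $e \coloneqq u - \overline{u}$ and recall that, since $(u,m)$ solves~\eqref{weakform1-C1H}, for all $\psi\in H^1_0(\Omega)$,
\begin{equation*}
\langle \RH(\overline{u},\overline{m}),\psi\rangle_{H^{-1}\times H^1_0} = \langle F[\overline{m}]-F[m],\psi\rangle_{H^{-1}\times H^1_0} + \int_\Omega \nu\nabla e\cdot\nabla\psi + \left(H[\nabla u]-H[\nabla\overline{u}]\right)\psi\,\dx.
\end{equation*}
The key idea is to split $H[\nabla u]-H[\nabla\overline{u}]$ into the linearization $\frac{\partial H}{\partial p}[\nabla\overline{u}]\cdot\nabla e$ plus a remainder $\rho\coloneqq H[\nabla u]-H[\nabla\overline{u}]-\frac{\partial H}{\partial p}[\nabla\overline{u}]\cdot\nabla e$. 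By Lemma~\ref{lemma-semismooth-tech-result}, for any $\epsilon>0$ there is $R>0$ such that $\|\rho\|_{H^{-1}(\Omega)}\leq\epsilon\|e\|_{H^1(\Omega)}$ whenever $\|e\|_{H^1(\Omega)}\leq R$; we will fix $\epsilon$ small at the end.

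Next I would test the identity with $\psi = e$ and estimate term by term. The diffusion term gives $\nu\|\nabla e\|_\Omega^2$, which by Poincaré–Friedrichs controls $\|e\|_{H^1(\Omega)}^2$ up to a constant depending only on $\Omega$ and $\nu$. The linearization term $\int_\Omega \left(\frac{\partial H}{\partial p}[\nabla\overline{u}]\cdot\nabla e\right)e\,\dx$ is bounded using the uniform bound~\eqref{h-deriv-linf-bound}, namely $\|\frac{\partial H}{\partial p}[\nabla\overline{u}]\|_{L^\infty(\Omega;\R^\dim)}\leq L_H$, combined with a Poincaré inequality: $\left|\int_\Omega (\frac{\partial H}{\partial p}[\nabla\overline{u}]\cdot\nabla e)\,e\,\dx\right|\leq L_H\|\nabla e\|_\Omega\|e\|_\Omega\leq L_H C_\Omega\|\nabla e\|_\Omega^2$, which is \emph{not} in general absorbable by $\nu\|\nabla e\|_\Omega^2$ — so this naive route fails, reflecting the noncoercivity of the convection term. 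I would therefore instead absorb this term by testing not with $e$ itself but exploit that the convection term $\int_\Omega (\bm{w}\cdot\nabla e)\,e\,\dx = -\tfrac12\int_\Omega (\Div\bm{w})\,e^2\,\dx$ when $\bm{w}$ is smooth enough — but $\frac{\partial H}{\partial p}[\nabla\overline{u}]$ need not have an $L^2$ divergence. The cleaner fix, which is the route I expect the authors take, is to bound $\|e\|_{H^1(\Omega)}$ via duality: write $\|e\|_{H^1(\Omega)}\lesssim\sup_{\|\psi\|_{H^1_0}=1}|(\nu\nabla e,\nabla\psi)_\Omega| + (\text{lower order})$ is circular, so instead one uses the linear operator $L_{\overline{u}}\colon v\mapsto \nu\nabla v\cdot\nabla(\cdot) + (\frac{\partial H}{\partial p}[\nabla\overline{u}]\cdot\nabla v)(\cdot)$ viewed as a map $H^1_0(\Omega)\to H^{-1}(\Omega)$, and establishes its \emph{inf–sup stability}: since this is a linear convection–diffusion operator with bounded divergence-free-or-not drift and coercive principal part, it is an isomorphism $H^1_0\to H^{-1}$ with inverse bounded in terms of $\Omega,d,\nu,L_H$ (Gårding plus injectivity via the Fredholm alternative — here one uses that the adjoint homogeneous problem $-\nu\Delta\phi - \Div(\frac{\partial H}{\partial p}[\nabla\overline{u}]\phi)=0$ has only the trivial solution, which follows from the maximum principle or energy estimates, since nonnegative data gives nonnegative solution and zero data gives zero). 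Hence $\|e\|_{H^1(\Omega)}\lesssim \|L_{\overline{u}}e\|_{H^{-1}(\Omega)}$, and $L_{\overline{u}}e = \RH(\overline{u},\overline{m}) + (F[m]-F[\overline{m}]) - \rho$ up to signs; taking dual norms and using~\eqref{F2} for the $F$-difference and the semismoothness bound for $\rho$ gives
\begin{equation*}
\|e\|_{H^1(\Omega)}\lesssim \|\RH(\overline{u},\overline{m})\|_{H^{-1}(\Omega)} + L_F\|m-\overline{m}\|_{\mathcal{H}} + \epsilon\|e\|_{H^1(\Omega)}.
\end{equation*}

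Finally, I would choose $\epsilon$ small enough (depending only on the inf–sup constant, hence on $\Omega,d,\nu,L_H$) that the $\epsilon\|e\|_{H^1(\Omega)}$ term is absorbed into the left-hand side, and let $R$ be the corresponding radius from Lemma~\ref{lemma-semismooth-tech-result}. This yields~\eqref{val-func-approx-err} with hidden constant depending only on $\Omega,d,\nu,L_H,L_F$, as claimed. The main obstacle is the one flagged above: establishing that the linearized operator $L_{\overline{u}}$ is an isomorphism $H^1_0(\Omega)\to H^{-1}(\Omega)$ with a bound on $\|L_{\overline{u}}^{-1}\|$ that is \emph{uniform} over all admissible $\overline{u}$ (depending only on the stated constants and not on $\overline{u}$ itself), since the drift $\frac{\partial H}{\partial p}[\nabla\overline{u}]$ is merely an $L^\infty$ vector field with no control on its divergence — the uniform well-posedness must come from the uniform $L^\infty$-bound $L_H$ together with the coercive diffusion, via a Gårding inequality and a compactness/uniqueness argument, and care is needed to make the constant tracking explicit.
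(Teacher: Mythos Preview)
Your approach is essentially the same as the paper's: linearize around $\overline{u}$, invoke a uniform inf--sup bound for the linearized convection--diffusion operator $L_{\overline{u}}$, and use Lemma~\ref{lemma-semismooth-tech-result} to absorb the remainder. The ``main obstacle'' you flag --- the uniform-in-$\overline{u}$ bound $\|e\|_{H^1(\Omega)}\lesssim\|L_{\overline{u}}e\|_{H^{-1}(\Omega)}$ with constant depending only on $\Omega,d,\nu,L_H$ --- is precisely what the paper imports from \cite[Lemma~4.5]{osborne2024analysis}, so you need not develop the G\aa rding/Fredholm route yourself (and indeed a bare Fredholm argument would not by itself give the required uniformity).
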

\begin{proof}
Since $\norm{\frac{\partial H}{\partial p}[\nabla \overline{u}]}_{L^\infty(\Omega;\R^\dim)}\leq L_H$, it follows from~\cite[Lemma~4.5]{osborne2024analysis} that
\begin{equation}
\norm{u-\overline{u}}_{H^1(\Omega)}
\lesssim \sup_{\substack{\psi\in {H_0^1(\Omega)}\\ \|\nabla\psi\|_\Omega=1}}\left[\int_{\Omega}\nu\nabla (u-\overline{u}){\cdot}\nabla \psi + \frac{\partial H}{\partial p}[\nabla \overline{u}]{\cdot}\nabla (u-\overline{u}) \psi\,\mathrm{d}x\right],
\end{equation}
where the hidden constant depends only on $\Omega$, $\dim$, $\nu$ and $L_H$.
Using the continuous HJB equation of \eqref{weakform1-C1H}, we find that 
\begin{multline}
\norm{u-\overline{u}}_{H^1(\Omega)} \lesssim 
\sup_{\substack{\psi\in {H_0^1(\Omega)}\\ \|\nabla\psi\|_\Omega=1}}\left[
\langle \RH(\overline{u},\overline{m}),\psi\rangle_{H^{-1}\times {H_0^1}}
+ \langle F[m]-F[\overline{m}],\psi\rangle_{H^{-1}\times H_0^1} 
\right. 
\\\left. -\int_{\Omega}\left(H[\nabla u]-H[\nabla \overline{u}]-\frac{\partial H}{\partial p}[\nabla \overline{u}]{\cdot}\nabla (u-\overline{u})\right)\psi\,\mathrm{d}x  \right].
\end{multline}
Hence, the Lipschitz continuity of $F$, c.f.\ \eqref{F2}, and the triangle inequality imply that    
\begin{multline}\label{eq:val-func-approx-err_1}
\norm{u-\overline{u}}_{H^1(\Omega)} \lesssim \|\RH(\overline{u},\overline{m})\|_{H^{-1}(\Omega)} + L_F\norm{m-\overline{m}}_{\mathcal{H}}
\\ +  \left\lVert H[\nabla u] - H[\nabla \overline{u}] - \frac{\partial H}{\partial p}[\nabla \overline{u}]{\cdot}\nabla (u-\overline{u})\right\rVert_{H^{-1}(\Omega)}.
\end{multline}
Next, we apply Lemma~\ref{lemma-semismooth-tech-result} with $\epsilon$ chosen sufficiently small in order to absorb the last term on the right-hand side of~\eqref{eq:val-func-approx-err_1} into the left-hand side, and thereby deduce that there exists $R>0$ such that~\eqref{val-func-approx-err} holds whenever $\norm{u-\overline{u}}_{H^1(\Omega)}\leq R$.
\end{proof}

We now consider the stability of the continuous KFP equation~\eqref{weakform2-C1H}.
\begin{lemma}[Stability of continuous KFP equation]\label{lemma-zk-H1-bound}
For all $\overline{u},\overline{m}\in {H_0^1(\Omega)}$, we have
\begin{equation}\label{eq:mk_H1_bound}
\norm{m-\overline{m}}_{H^1(\Omega)}\lesssim  \norm{\RF(\overline{u},\overline{m})}_{H^{-1}(\Omega)} +\|u-\overline{u}\|_{H^1(\Omega)},
\end{equation}
where the constant depends only on $\Omega$, $\dim$, $\nu$, $L_H$, $L_{H_p}$ and on $\Mfty$.
\end{lemma}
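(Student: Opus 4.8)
The plan rests on the observation that the KFP equation~\eqref{weakform2-C1H} is \emph{linear} in the density, so that the density error $w\coloneqq m-\overline m$ solves a linear advection--diffusion problem; consequently, in contrast with Lemma~\ref{lemma-w-H1-bound}, no smallness hypothesis is needed and no semismoothness argument enters. Concretely, recall the identity~\eqref{res-est-2}: for every $\phi\in H^1_0(\Omega)$,
\[
\langle \RF(\overline u,\overline m),\phi\rangle_{H^{-1}\times H^1_0}
= \int_\Omega \nu\nabla w\cdot\nabla\phi + w\,\beta\cdot\nabla\phi\,\dx
+ \int_\Omega m\Bigl(\tfrac{\partial H}{\partial p}[\nabla u]-\tfrac{\partial H}{\partial p}[\nabla\overline u]\Bigr)\cdot\nabla\phi\,\dx,
\]
where $\beta\coloneqq\frac{\partial H}{\partial p}[\nabla\overline u]$ satisfies $\norm{\beta}_{L^\infty(\Omega;\R^\dim)}\le L_H$ by~\eqref{h-deriv-linf-bound}. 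Rearranging, $w$ solves the linear problem $b(w,\phi)=\langle\RF(\overline u,\overline m),\phi\rangle - \int_\Omega m(\frac{\partial H}{\partial p}[\nabla u]-\frac{\partial H}{\partial p}[\nabla\overline u])\cdot\nabla\phi\,\dx$ for all $\phi\in H^1_0(\Omega)$, where $b(v,\phi)\coloneqq\int_\Omega\nu\nabla v\cdot\nabla\phi + v\,\beta\cdot\nabla\phi\,\dx$.

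The next step is to invoke a stability (inf--sup) estimate for the bilinear form $b$. The key point is that $b$ is the transpose of the advection--diffusion form $a(\psi,\chi)\coloneqq\int_\Omega\nu\nabla\psi\cdot\nabla\chi + \beta\cdot\nabla\psi\,\chi\,\dx$ appearing in the proof of Lemma~\ref{lemma-w-H1-bound}, i.e.\ $b(v,\phi)=a(\phi,v)$. For that form, \cite[Lemma~4.5]{osborne2024analysis} gives the inf--sup bound $\norm{\psi}_{H^1(\Omega)}\lesssim\sup_{\norm{\nabla\chi}_\Omega=1}a(\psi,\chi)$, with constant depending only on $\Omega,\dim,\nu$ and $L_H$. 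Since $a$ is bounded on $H^1_0(\Omega)\times H^1_0(\Omega)$ (equipped with the seminorm $v\mapsto\norm{\nabla v}_\Omega$, equivalent to the full norm by Poincaré), its primal and dual inf--sup constants coincide; equivalently, the operator induced by $a$ is an isomorphism onto $H^{-1}(\Omega)$ if and only if its transpose is, with the same bound. Hence $\norm{v}_{H^1(\Omega)}\lesssim\sup_{\norm{\nabla\phi}_\Omega=1}a(\phi,v)=\sup_{\norm{\nabla\phi}_\Omega=1}b(v,\phi)$ for all $v\in H^1_0(\Omega)$. Applying this with $v=w$ and inserting the expression for $b(w,\phi)$ above yields
\[
\norm{m-\overline m}_{H^1(\Omega)}\lesssim \norm{\RF(\overline u,\overline m)}_{H^{-1}(\Omega)}
+ \sup_{\norm{\nabla\phi}_\Omega=1}\int_\Omega m\Bigl(\tfrac{\partial H}{\partial p}[\nabla u]-\tfrac{\partial H}{\partial p}[\nabla\overline u]\Bigr)\cdot\nabla\phi\,\dx.
\]

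Finally, the remaining supremum is controlled exactly as in~\eqref{eq:R2_bound_3}: the Lipschitz continuity~\eqref{eq:Hp_bound} of $\frac{\partial H}{\partial p}$ together with the essential bound~\eqref{m-ess-bound} on $m$ give $\bigl|\int_\Omega m(\frac{\partial H}{\partial p}[\nabla u]-\frac{\partial H}{\partial p}[\nabla\overline u])\cdot\nabla\phi\,\dx\bigr|\le L_{H_p}\Mfty\norm{u-\overline u}_{H^1(\Omega)}\norm{\nabla\phi}_\Omega$. Combining this with the previous display yields~\eqref{eq:mk_H1_bound}, with hidden constant depending only on $\Omega,\dim,\nu,L_H,L_{H_p}$ and $\Mfty$. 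The only nonroutine ingredient is the passage from the primal stability bound of \cite[Lemma~4.5]{osborne2024analysis} (the form in which it is naturally used for the HJB equation) to the dual bound needed here for the divergence-form KFP operator; this is supplied by the standard equivalence of the two inf--sup constants of a bounded bilinear form on a Hilbert space, and is the step I would be most careful to state precisely, since everything else merely reprises computations already carried out in the proof of Lemma~\ref{lemma-E-general-est}.
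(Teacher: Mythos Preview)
Your proof is correct and follows essentially the same route as the paper: use the identity~\eqref{res-est-2} to isolate the linear advection--diffusion form in $m-\overline m$, invoke the inf--sup stability from \cite[Lemma~4.5]{osborne2024analysis}, and bound the remaining term via~\eqref{eq:R2_bound_3}. The only difference is that the paper applies \cite[Lemma~4.5]{osborne2024analysis} directly to the divergence-form bilinear form $b(v,\phi)=\int_\Omega \nu\nabla v\cdot\nabla\phi + v\,\beta\cdot\nabla\phi\,\dx$, so your duality detour (transferring the inf--sup from $a$ to its transpose $b$) is unnecessary, though certainly valid.
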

\begin{proof}
Since $\norm{\frac{\partial H}{\partial p}[\nabla \overline{u}]}_{L^\infty(\Omega;\R^\dim)}\leq L_H$, it follows from~\cite[Lemma~4.5]{osborne2024analysis} that
\begin{multline}
    \norm{m - \overline{m}}_{H^1(\Omega)} \lesssim \sup_{\substack{\phi\in {H_0^1(\Omega)}\\ \|\nabla\phi\|_\Omega=1}}\left[  \int_{\Omega} \nu\nabla(m - \overline{m}){\cdot}\nabla \phi + (m - \overline{m}){\frac{\partial H}{\partial p}[\nabla \overline{u}}]{\cdot}\nabla \phi \,\mathrm{d}x\right]
    \\ = \sup_{\substack{\phi\in {H_0^1(\Omega)}\\ \|\nabla\phi\|_\Omega    =1}}
    \left[\langle  \RF(\overline{u},\overline{m}),\phi\rangle_{H^{-1}\times H^1_0}
    - \int_\Omega m \left(\frac{\partial H}{\partial p}[\nabla u] - \frac{\partial H}{\partial p}[\nabla \overline{u}] \right)\cdot\nabla \phi \,\mathrm{d}x\right],
\end{multline}
where the constant depends only on $\Omega$, $\dim$, $\nu$ and $L_H$.
Note that we have used~\eqref{res-est-2} in passing to the second line above.
Next, we see from~\eqref{eq:R2_bound_3} that
\begin{equation}
\norm{m - \overline{m}}_{H^1(\Omega)} \lesssim \norm{\RF(\overline{u},\overline{m})}_{H^{-1}(\Omega)} +  \Mfty L_{H_p}\norm{u-\overline{u}}_{H^1(\Omega)},
\end{equation}
thus showing~\eqref{eq:mk_H1_bound}.
\end{proof}

The next result concerns the $\mathcal{H}$-norm stability in the density component for the continuous MFG system~\eqref{weakform-C1H}, when restricted to the set of nonnegative functions in~$H_0^1(\Omega)$.
This bound can be seen as a quantitative analogue to some of the inequalities used in the well-known proofs of uniqueness of solutions due to~\cite{lasry2007mean}.

\begin{lemma}\label{lemma-key-L2-bound}
For any $\overline{u}, \overline{m}\in {H_0^1(\Omega)}$ such that $\overline{m}\geq 0$ a.e.\ in $\Omega$, we have
\begin{equation}\label{key-L2-bound}
c_F\|m-\overline{m}\|_{\mathcal{H}}^2\leq \langle \RH(\overline{u},\overline{m}),\overline{m}-m\rangle_{H^{-1}\times {H_0^1}} - \langle \RF(\overline{u},\overline{m}),\overline{u}-u\rangle_{H^{-1}\times H_0^1}.
\end{equation}
\end{lemma}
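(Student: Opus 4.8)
The plan is to exploit the monotonicity structure of the MFG system in the classical Lasry--Lions manner, but keeping track of the residuals as defects from the exact equations. First I would write out the two residuals tested against the specific combination of error functions that appears on the right-hand side of~\eqref{key-L2-bound}. That is, I would evaluate $\langle \RH(\overline{u},\overline{m}),\overline{m}-m\rangle_{H^{-1}\times H_0^1}$ using~\eqref{res-est-1} with $\psi = \overline{m}-m$, which gives
\[
\langle \RH(\overline{u},\overline{m}),\overline{m}-m\rangle = \langle F[\overline{m}]-F[m],\overline{m}-m\rangle + \int_\Omega \nu\nabla(u-\overline{u})\cdot\nabla(\overline{m}-m) + (H[\nabla u]-H[\nabla\overline{u}])(\overline{m}-m)\,\dx,
\]
and similarly evaluate $\langle \RF(\overline{u},\overline{m}),\overline{u}-u\rangle_{H^{-1}\times H_0^1}$ using~\eqref{res-est-2} with $\phi = \overline{u}-u$, which gives
\[
\langle \RF(\overline{u},\overline{m}),\overline{u}-u\rangle = \int_\Omega \nu\nabla(m-\overline{m})\cdot\nabla(\overline{u}-u) + (m-\overline{m})\tfrac{\partial H}{\partial p}[\nabla\overline{u}]\cdot\nabla(\overline{u}-u) + m\Bigl(\tfrac{\partial H}{\partial p}[\nabla u]-\tfrac{\partial H}{\partial p}[\nabla\overline{u}]\Bigr)\cdot\nabla(\overline{u}-u)\,\dx.
\]

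Next I would subtract the second identity from the first, as on the right of~\eqref{key-L2-bound}. The two Dirichlet terms $\int_\Omega \nu\nabla(u-\overline{u})\cdot\nabla(\overline{m}-m)\,\dx$ and $-\int_\Omega\nu\nabla(m-\overline{m})\cdot\nabla(\overline{u}-u)\,\dx$ coincide and cancel exactly --- this is the crucial antisymmetric cancellation that makes the argument work despite the lack of coercivity. What remains, after this cancellation, is
\[
\langle F[\overline{m}]-F[m],\overline{m}-m\rangle + \int_\Omega (H[\nabla u]-H[\nabla\overline{u}])(\overline{m}-m)\,\dx + \int_\Omega (\overline{m}-m)\tfrac{\partial H}{\partial p}[\nabla\overline{u}]\cdot\nabla(u-\overline{u})\,\dx - \int_\Omega m\Bigl(\tfrac{\partial H}{\partial p}[\nabla u]-\tfrac{\partial H}{\partial p}[\nabla\overline{u}]\Bigr)\cdot\nabla(\overline{u}-u)\,\dx.
\]
For the first term, strong monotonicity~\eqref{strong-mono-F-C1H} (applied with $w=\overline{m}$, $v=m$, both in $H_0^1(\Omega)$) gives $\langle F[\overline{m}]-F[m],\overline{m}-m\rangle \geq c_F\|m-\overline{m}\|_{\mathcal H}^2$, which is exactly the quantity we want to bound from below. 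So it suffices to show that the sum of the three remaining integral terms is nonnegative. Grouping the last three terms together and writing $e_u \coloneqq u-\overline{u}$, one sees they equal
\[
\int_\Omega (\overline{m}-m)\Bigl(H[\nabla u]-H[\nabla\overline{u}] - \tfrac{\partial H}{\partial p}[\nabla\overline{u}]\cdot\nabla(\overline{u}-u)\Bigr)\,\dx + \int_\Omega m\Bigl(H[\nabla u]-H[\nabla\overline{u}] - \tfrac{\partial H}{\partial p}[\nabla u]\cdot\nabla(u-\overline{u})\Bigr)\,\dx,
\]
after regrouping the $\tfrac{\partial H}{\partial p}$ contributions by adding and subtracting the appropriate terms. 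Using $\overline{m}-m = \overline{m} - m$ and recombining, the natural split is: a term $\int_\Omega \overline{m}\bigl(H[\nabla u]-H[\nabla\overline{u}]-\tfrac{\partial H}{\partial p}[\nabla\overline{u}]\cdot\nabla(u-\overline{u})\bigr)\,\dx$ and a term $\int_\Omega m\bigl(H[\nabla\overline{u}]-H[\nabla u]-\tfrac{\partial H}{\partial p}[\nabla u]\cdot\nabla(\overline{u}-u)\bigr)\,\dx$. Each integrand factor in parentheses is nonnegative pointwise by the convexity of $p\mapsto H(x,p)$: convexity gives $H(x,p) \geq H(x,q) + \tfrac{\partial H}{\partial p}(x,q)\cdot(p-q)$, so $H[\nabla u]-H[\nabla\overline{u}]-\tfrac{\partial H}{\partial p}[\nabla\overline{u}]\cdot\nabla(u-\overline{u}) \geq 0$ and likewise with the roles of $u$ and $\overline{u}$ swapped. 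Since $\overline{m}\geq 0$ a.e.\ by hypothesis and $m\geq 0$ a.e.\ by Remark~\ref{rem-continuous-pb-basic-properties-C1H}, both integrals are nonnegative, and~\eqref{key-L2-bound} follows.

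The main obstacle --- and the only point requiring care --- is the bookkeeping in the regrouping step: one must correctly add and subtract $\tfrac{\partial H}{\partial p}[\nabla u]\cdot\nabla(u-\overline{u})$ and $\tfrac{\partial H}{\partial p}[\nabla\overline{u}]\cdot\nabla(u-\overline{u})$ weighted respectively by $m$ and by $\overline{m}-m$ so that the cross terms rearrange into the two convexity-defect integrals. A secondary subtlety is ensuring all pairings are well-defined: $\overline{m}-m\in H_0^1(\Omega)$ and $\overline{u}-u\in H_0^1(\Omega)$ are legitimate test functions, $H[\nabla u]-H[\nabla\overline{u}]\in L^2(\Omega)$ by the Lipschitz composition property, $\tfrac{\partial H}{\partial p}[\nabla\overline{u}]\in L^\infty(\Omega;\R^\dim)$ by~\eqref{h-deriv-linf-bound}, and $m\in L^\infty(\Omega)$ by~\eqref{m-ess-bound}, so all integrals converge; and the use of~\eqref{strong-mono-F-C1H} is legitimate precisely because both $\overline{m}$ and $m$ lie in $H_0^1(\Omega)$, which is why the lemma is stated for $\overline{m}\in H_0^1(\Omega)$ rather than merely $\overline{m}\in\mathcal H$. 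No smallness condition on the residual is needed here; that will only enter later when this bound is combined with the stability lemmas to close the loop.
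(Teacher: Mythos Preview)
Your proposal is correct and follows essentially the same route as the paper: use the residual identities~\eqref{res-est-1} and~\eqref{res-est-2}, cancel the diffusion cross-terms, isolate the monotonicity term $\langle F[\overline{m}]-F[m],\overline{m}-m\rangle$, and regroup the Hamiltonian terms into two convexity defects weighted by $m$ and $\overline{m}$ respectively. The paper packages the same computation by introducing the shorthand $R_H[\nabla v,\nabla w]\coloneqq H[\nabla v]-H[\nabla w]-\tfrac{\partial H}{\partial p}[\nabla w]\cdot\nabla(v-w)$ and stating the identity~\eqref{eq:L2mono_bound_1} directly, but the content is identical to your final split.

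One bookkeeping slip: in your first displayed expression after the cancellation, the third term should read $\int_\Omega (\overline{m}-m)\tfrac{\partial H}{\partial p}[\nabla\overline{u}]\cdot\nabla(\overline{u}-u)\,\dx$ (or carry a minus sign with $\nabla(u-\overline{u})$); as written the sign is off. Your final split into $\int_\Omega \overline{m}\,R_H[\nabla u,\nabla\overline{u}]\,\dx + \int_\Omega m\,R_H[\nabla\overline{u},\nabla u]\,\dx$ is nonetheless correct, so this is a typo rather than a gap.
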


\begin{proof}
Let $\overline{u}, \overline{m}\in {H_0^1(\Omega)}$ with $\overline{m}\geq 0$ a.e.\ in $\Omega$ be fixed but arbitrary.
Using the continuous HJB equation~\eqref{weakform1-C1H} and the continuous KFP equation~\eqref{weakform2-C1H}, we find after some elementary computation that
\begin{multline}\label{eq:L2mono_bound_1}
\langle \RH(\overline{u},\overline{m}),\overline{m}-m\rangle_{H^{-1}\times {H_0^1}} - \langle \RF(\overline{u},\overline{m}),\overline{u}-u\rangle_{H^{-1}\times H_0^1}
\\= \langle F[\overline{m}] - F[m],\overline{m}-m\rangle_{H^{-1}\times H_0^1} + (m, R_H[\nabla\overline{u},\nabla u])_\Omega + (\overline{m},R_H[\nabla u,\nabla \overline{u}])_\Omega,
\end{multline}
where $R_H[\nabla v,\nabla w]\coloneqq H[\nabla v]-H[\nabla w]-\frac{\partial H}{\partial p}[\nabla w]{\cdot} \nabla(v-w)$ for any $v,\,w \in H^1(\Omega)$,
and where we recall that $(\cdot,\cdot)_{\Omega}$ denotes the inner product on $L^2(\Omega)$.
Since $H$ is convex and differentiable w.r.t.\ $p$, we have $R_H[\nabla v,\nabla w]\geq 0$ a.e.\ in $\Omega$, for any $v,\,w$. Also, we have $m\geq 0$ in $\Omega$ by the comparison principle. Therefore, the terms $(m, R_H[\nabla\overline{u},\nabla u])_\Omega$ and $(\overline{m},R_H[\nabla u,\nabla \overline{u}])_\Omega$ in~\eqref{eq:L2mono_bound_1} are both nonnegative, which implies that
\begin{equation}\label{eq:L2mono_bound_2}
 \begin{aligned}
\langle F[\overline{m}] - F[m],\overline{m} - m\rangle_{H^{-1}\times H_0^1} \leq  \langle \RH(\overline{u},\overline{m}),\overline{m}-m\rangle_{H^{-1}\times {H_0^1}} - \langle \RF(\overline{u},\overline{m}),\overline{u}-u\rangle_{H^{-1}\times H_0^1}.
\end{aligned}
\end{equation}
The desired result~\eqref{key-L2-bound} is then obtained from~\eqref{eq:L2mono_bound_2} and from the strong monotonicity of $F$ in~\eqref{strong-mono-F-C1H}.
\end{proof}

\subsection{Proof of Theorem~\ref{thm:error-res-equiv}}
In this section we prove the following result which shows the continuous dependence of the error on the residual of the problem.

\begin{lemma}\label{lem:residual_stability}
For each $\delta>0$, there exists a $\mathcal{R}_*>0$ such that
\begin{equation}
\mathcal{R}(\overline{u},\overline{m})\leq \mathcal{R}_* \implies \norm{u-\overline{u}}_{H^1(\Omega)}+\norm{m-\overline{m}}_{H^1(\Omega)}<\delta,
\end{equation}
for all $(\overline{u},\overline{m})\in H^1_0(\Omega)\times H^1_0(\Omega)$.
\end{lemma}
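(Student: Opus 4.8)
The plan is to argue by contradiction, exploiting the compactness embedded in the hypotheses on $F$ and the a priori bounds for the exact solution $(u,m)$. Suppose the claim fails for some $\delta>0$: then there is a sequence $(\overline{u}_n,\overline{m}_n)\in H^1_0(\Omega)\times H^1_0(\Omega)$ with $\mathcal{R}(\overline{u}_n,\overline{m}_n)\to 0$ but $\|u-\overline{u}_n\|_{H^1(\Omega)}+\|m-\overline{m}_n\|_{H^1(\Omega)}\geq\delta$ for all $n$. The first step is to show that this sequence is bounded in $H^1_0(\Omega)\times H^1_0(\Omega)$. For $\overline{m}_n$ this follows directly from Lemma~\ref{lemma-zk-H1-bound} combined with Lemma~\ref{lemma-w-H1-bound}: actually one should first bound $\overline{u}_n$. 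From the definition of $\RH$ and the bound~\eqref{bounds} on $H$ (linear growth) together with the Lipschitz bound~\eqref{F2} on $F$, testing with $\psi=\overline{u}_n$ and using that $\RH(\overline{u}_n,\overline{m}_n)\to 0$ in $H^{-1}(\Omega)$, one gets $\nu\|\nabla\overline{u}_n\|_\Omega^2\lesssim (1+\|\overline{m}_n\|_{\mathcal H}+\|\nabla\overline{u}_n\|_\Omega)$; similarly testing $\RF$ with $\phi=\overline{m}_n$ and using~\eqref{h-deriv-linf-bound} gives $\nu\|\nabla\overline{m}_n\|_\Omega^2\lesssim \|G\|_{H^{-1}}\|\nabla\overline{m}_n\|_\Omega + L_H\|\overline{m}_n\|_\Omega\|\nabla\overline{m}_n\|_\Omega$, hence $\|\overline{m}_n\|_{H^1(\Omega)}$ is bounded, and then so is $\|\overline{u}_n\|_{H^1(\Omega)}$.

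Having established boundedness, I would extract a subsequence (not relabelled) such that $\overline{u}_n\rightharpoonup u_\infty$ and $\overline{m}_n\rightharpoonup m_\infty$ weakly in $H^1_0(\Omega)$, and, using the compact embedding $H^1_0(\Omega)\hookrightarrow\hookrightarrow\mathcal H$ (assumed in Section~\ref{sec:Notation&Setting}) and also $H^1_0(\Omega)\hookrightarrow\hookrightarrow L^2(\Omega)$, strongly in $\mathcal H$ and in $L^2(\Omega)$. The key step is then to pass to the limit in the residual identities and conclude that $(u_\infty,m_\infty)$ solves~\eqref{weakform-C1H}. However, the nonlinear terms $H[\nabla\overline{u}_n]$ and $\overline{m}_n\,\tfrac{\partial H}{\partial p}[\nabla\overline{u}_n]$ involve $\nabla\overline{u}_n$, which only converges weakly, so passing to the limit in these products is not immediate. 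This is the main obstacle. To overcome it, I would instead use the stability lemmas quantitatively rather than pass to a limiting PDE: apply Lemma~\ref{lemma-key-L2-bound} to $(\overline{u}_n,\overline{m}_n)$ (legitimate once we know $\overline{m}_n\geq 0$ — which we may assume, or we note that the claim is only needed for such pairs in Theorem~\ref{thm:error-res-equiv}, or we replace $\overline{m}_n$ by $\overline{m}_n^+$ and control the truncation error) to get $c_F\|m-\overline{m}_n\|_{\mathcal H}^2\le \|\RH(\overline{u}_n,\overline{m}_n)\|_{H^{-1}}\|\overline{m}_n-m\|_{H^1}+\|\RF(\overline{u}_n,\overline{m}_n)\|_{H^{-1}}\|\overline{u}_n-u\|_{H^1}$, whose right-hand side tends to $0$ by the boundedness of the errors and $\mathcal{R}(\overline{u}_n,\overline{m}_n)\to0$; hence $\|m-\overline{m}_n\|_{\mathcal H}\to 0$.

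With $\|m-\overline{m}_n\|_{\mathcal H}\to0$ in hand, I would feed this into Lemma~\ref{lemma-w-H1-bound}: there is $R>0$ so that $\|u-\overline{u}_n\|_{H^1(\Omega)}\lesssim\|\RH(\overline{u}_n,\overline{m}_n)\|_{H^{-1}}+\|m-\overline{m}_n\|_{\mathcal H}$ \emph{provided} $\|u-\overline{u}_n\|_{H^1(\Omega)}\le R$. The subtlety here is that Lemma~\ref{lemma-w-H1-bound} is only a \emph{local} stability statement, so it does not by itself force $\overline{u}_n$ close to $u$; we must rule out the possibility that $\overline{u}_n$ stays far from $u$. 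For this I would argue that the weak limit $u_\infty$ must in fact equal $u$: from $\overline{m}_n\to m$ in $\mathcal H$ we have $F[\overline{m}_n]\to F[m]$ in $H^{-1}(\Omega)$, so $\RH(\overline{u}_n,\overline{m}_n)\to0$ forces $\int_\Omega \nu\nabla\overline{u}_n\cdot\nabla\psi + H[\nabla\overline{u}_n]\psi\,\dx\to\langle F[m],\psi\rangle$; combined with the convexity/monotonicity structure of the HJB operator and uniqueness of the viscous HJB solution with right-hand side $F[m]$ (which is the first equation of the well-posed system~\eqref{weakform-C1H}), one deduces $\overline{u}_n\to u$ strongly in $H^1(\Omega)$ — the strong convergence of gradients following from a Minty-type / energy argument using strict convexity of $H$ or, more directly, from the already-established stability Lemma~\ref{lemma-w-H1-bound} once $\|u-\overline{u}_n\|_{H^1}<R$ holds for large $n$, which the weak limit identification guarantees. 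Finally, Lemma~\ref{lemma-zk-H1-bound} gives $\|m-\overline{m}_n\|_{H^1(\Omega)}\lesssim\|\RF(\overline{u}_n,\overline{m}_n)\|_{H^{-1}}+\|u-\overline{u}_n\|_{H^1(\Omega)}\to0$. Thus $\|u-\overline{u}_n\|_{H^1(\Omega)}+\|m-\overline{m}_n\|_{H^1(\Omega)}\to0$, contradicting the standing assumption that this sum is $\ge\delta$; the existence of the desired $\mathcal{R}_*$ follows. I expect the cleanest route avoids invoking a limiting PDE altogether and chains the three stability lemmas together with the contradiction hypothesis, the only real care being the local nature of Lemma~\ref{lemma-w-H1-bound}, which is handled by the boundedness-plus-weak-limit argument above.
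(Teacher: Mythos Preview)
Your contradiction framework matches the paper's, but two steps in your execution have genuine gaps, and the paper's route avoids them both.

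\textbf{Boundedness.} Testing the residual identities with $\overline{u}_n$ and $\overline{m}_n$ does not give a uniform $H^1$ bound without a smallness assumption. For instance, from the KFP residual you get $\nu\|\nabla\overline{m}_n\|_\Omega \le L_H\|\overline{m}_n\|_\Omega + C$, and after Poincar\'e this yields $(\nu - L_H C_P)\|\nabla\overline{m}_n\|_\Omega \le C$, which is only useful when $L_H C_P<\nu$. No such condition is assumed. The paper instead invokes \cite[Lemma~4.5]{osborne2024analysis}, a genuine inf--sup stability bound for operators of the form $v\mapsto -\nu\Delta v + b\cdot\nabla v$ (resp.\ $-\nu\Delta v - \Div(vb)$) with merely bounded $b$, which holds without smallness. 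For the HJB side the paper uses a measurable selection to write $H[\nabla u_k]=b_k\cdot\nabla u_k - f_k$ and applies the same stability lemma.

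\textbf{Use of Lemma~\ref{lemma-key-L2-bound}.} The statement of Lemma~\ref{lem:residual_stability} is for \emph{all} pairs $(\overline{u},\overline{m})$, with no sign condition on $\overline{m}$, so you cannot invoke Lemma~\ref{lemma-key-L2-bound} (which needs $\overline{m}\ge 0$). Replacing $\overline{m}_n$ by $\overline{m}_n^+$ changes the residuals in an uncontrolled way. The paper's proof of Lemma~\ref{lem:residual_stability} does not use Lemma~\ref{lemma-key-L2-bound} (nor Lemmas~\ref{lemma-w-H1-bound} or~\ref{lemma-zk-H1-bound}) at all.

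\textbf{What the paper does instead.} After boundedness and extraction of weak limits $(\widetilde{u},\widetilde{m})$, the paper passes to a further subsequence so that $H[\nabla u_k]\rightharpoonup\tilde{h}$ in $L^2$ and $m_k\to\widetilde{m}$ in $\mathcal{H}$ (hence $F[m_k]\to F[\widetilde{m}]$ in $H^{-1}$). The decisive step is an energy argument: testing the HJB residual with $u_k$ and using weak-times-strong convergence gives $\nu\|\nabla u_k\|_\Omega^2\to \langle F[\widetilde{m}],\widetilde{u}\rangle - \int_\Omega\tilde{h}\,\widetilde{u}\,\dx = \nu\|\nabla\widetilde{u}\|_\Omega^2$, so $u_k\to\widetilde{u}$ \emph{strongly} in $H^1$. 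Strong convergence then identifies $\tilde{h}=H[\nabla\widetilde{u}]$ and $\tfrac{\partial H}{\partial p}[\nabla u_k]\to\tfrac{\partial H}{\partial p}[\nabla\widetilde{u}]$ in $L^2$; the same energy trick on the KFP residual yields $m_k\to\widetilde{m}$ strongly in $H^1$. Hence $(\widetilde{u},\widetilde{m})$ is a second weak solution at distance $\ge\delta$ from $(u,m)$, contradicting uniqueness. This avoids both the sign condition and the local restriction $\|u-\overline{u}\|\le R$ that troubled your chain of stability lemmas; the ``Minty-type/energy argument'' you allude to is exactly this, but it must be made the backbone of the proof rather than a fallback.
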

\begin{proof}
The main idea of the proof is to show that if the claim were false, then this would contradict the uniqueness of the solution of~\eqref{sys}.
Under the hypothesis that the claim is false, there exists a $\delta>0$ and a sequence of pairs $(u_k,m_k)\in H^1_0(\Omega)\times H^1_0(\Omega)$, $k\in \N$, with $\mathcal{R}(u_k,m_k)\tends 0$ as $k\tends \infty$, but $\norm{u-u_k}_{H^1(\Omega)}+\norm{m-m_k}_{H^1(\Omega)}\geq \delta$.

First, we claim that the sequence is uniformly bounded in the $H^1$-norm, i.e.\ that
\begin{equation}\label{eq:weaklimits_uniform}
\sup_{k\in\N }\left[\norm{m_k}_{H^1(\Omega)}+\norm{u_k}_{H^1(\Omega)}\right] <\infty.
\end{equation}
Indeed, since $\norm{\frac{\partial H}{\partial p}[\nabla u_k]}_{L^\infty(\Omega;\R^\dim)}\leq L_H$ for all $k\in\N$, we first apply \cite[Lemma~4.5]{osborne2024analysis} and the hypothesis that~$\lim_{k\tends\infty}\norm{\RF(u_k,m_k)}_{H^{-1}(\Omega)}=0$ to obtain
\begin{equation}\label{eq:weaklimits_uniform_1}
\begin{split}
\limsup_{k\tends\infty}\norm{m_k}_{H^1(\Omega)}
&\lesssim \limsup_{k\tends\infty}\sup_{\substack{\phi\in H^1_0(\Omega) \\ \norm{\nabla \phi}_\Omega=1 }} \left[\int_\Omega \nu\nabla m_k{\cdot}\nabla \phi + m_k \frac{\partial H}{\partial p}[\nabla u_k]{\cdot}\nabla \phi \,\mathrm{d}x \right]
\\ 
&= \limsup_{k\tends\infty}\norm{G-\RF(u_k,m_k)}_{H^{-1}(\Omega)}
 \leq \norm{G}_{H^{-1}(\Omega)}.
\end{split}
\end{equation}
Next, recalling~\eqref{Hamiltonian} and the fact that $\mathcal{A}$ is compact, with the functions $b$ and $f$ continuous on $\overline{\Omega}\times\mathcal{A}$, we use the measurable selection theorem~\cite[Theorem~10]{SmearsSuli2014} to find that, for each $k\in \N$, there exists a Lebesgue measurable map $\alpha_k:\Omega\tends \mathcal{A}$ such that
\[
\alpha_k(x) \in \argmax_{\alpha\in\mathcal{A}}\left[b(x,\alpha){\cdot}\nabla u_k(x)-f(x,\alpha)\right] \quad \text{for a.e. }x\in\Omega.
\]
Defining $b_k\coloneqq b(\cdot,\alpha_k(\cdot))$ and $f_k\coloneqq f(\cdot,\alpha_k(\cdot))$, we then have $H[\nabla u_k]=b_k\cdot \nabla u_k - f_k$ in $\Omega$. This implies that each $u_k\in H^1_0(\Omega)$ satisfies
\begin{equation}\label{eq:weaklimits_uniform_2}
\int_\Omega \nu\nabla u_k{\cdot}\nabla \psi + b_k{\cdot}\nabla u_k \psi \,\mathrm{d}x =  \langle F[m_k],\psi \rangle_{H^{-1}\times H^1_0} - \langle \RH(u_k,m_k),\psi\rangle
+ \int_\Omega f_k \psi \,\mathrm{d}x,
\end{equation}
for all $\psi \in H^1_0(\Omega)$.
Therefore, \cite[Lemma~4.5]{osborne2024analysis} applied to~\eqref{eq:weaklimits_uniform_2} shows that
\begin{equation}
\limsup_{k\tends\infty}\norm{u_k}_{H^1(\Omega)}\lesssim 1+ \norm{f}_{C(\overline{\Omega}\times\mathcal{A})}+\norm{G}_{H^{-1}(\Omega)},
\end{equation}
where we have used here the hypothesis that $\norm{\RH(u_k,m_k)}_{H^{-1}(\Omega)}\tends 0$ as $k\tends \infty$, and also the Lipschitz continuity of $F$, c.f.\ \eqref{F2}, the embedding of $H^1_0(\Omega)$ into $\mathcal{H}$, and the above bound~\eqref{eq:weaklimits_uniform_1} to bound $\limsup_{k\tends\infty}\norm{F[m_k]}_{H^{-1}(\Omega)}\lesssim 1+\norm{G}_{H^{-1}(\Omega)}$.
This completes the proof of~\eqref{eq:weaklimits_uniform}.

The uniform boundedness of the sequence~\eqref{eq:weaklimits_uniform} and the Rellich--Kondrachov theorem then imply that there exists a $(\widetilde{u},\widetilde{m})\in H^1_0(\Omega)\times H^1_0(\Omega)$ and a subsequence, to which we pass without change of notation, such that $m_k\rightharpoonup \widetilde{m}$ in $H^1_0(\Omega)$ and $u_k\rightharpoonup \widetilde{u}$ in $H^1_0(\Omega)$ as $k\tends \infty$, where the convergence of the sequences is additionally strong in $L^2(\Omega)$.
Furthermore, after possibly passing to a further subsequence, we can assume $m_k\tends \widetilde{m}$ in $\mathcal{H}$ without loss of generality, since $H^1_0(\Omega)$ is compactly embedded in $\mathcal{H}$ by hypothesis.
This implies that $F[m_k]\tends F[\widetilde{m}]$ in $H^{-1}(\Omega)$ by continuity of $F$ from $\mathcal{H}$ to $H^{-1}(\Omega)$.
After possibly passing to a further subsequence, we can also assume without loss of generality that $h_k\coloneqq H[\nabla u_k] \rightharpoonup \tilde{h} $ in $L^2(\Omega)$ as $k\tends\infty$, for some $\tilde{h}\in L^2(\Omega)$.
Then, it follows from $\mathcal{R}(u_k,m_k)\tends 0$ as $k\tends \infty$ that
 \begin{align}\label{eq:weaklimit_1}
 0=\lim_{k\tends \infty}\langle \RH(u_k,m_k),\psi\rangle_{H^{-1}\times H^1_0} = \langle F[\widetilde{m}],\psi\rangle_{H^{-1}\times H^1_0} - \int_\Omega \nu\nabla \widetilde{u}{\cdot} \nabla \psi + \tilde{h} \psi \,\mathrm{d}x,
 \end{align}
 for all $\psi \in H^1_0(\Omega)$.
Using weak-times-strong convergence and $\RH(u_k,m_k)\tends 0$ as $k\tends \infty$, we see that
\begin{equation}\label{eq:weaklimit_2}
\begin{aligned}
\lim_{k\tends\infty}\nu \norm{\nabla u_k}_
{\Omega}^2 &= \lim_{k\tends \infty}
\left[ \langle F[m_k],u_k\rangle_{H^{-1}\times H^1_0}-\int_\Omega h_k u_k \,\mathrm{d}x - \langle \RH(u_k,m_k),u_k\rangle_{H^{-1}\times H^1_0} \right] 
\\ &= \langle F[\widetilde{m}],\widetilde{u}\rangle_{H^{-1}\times H^1_0}-\int_\Omega\tilde{h}\widetilde{u} \,\mathrm{d}x = \nu \norm{\nabla \widetilde{u}}_\Omega^2,
\end{aligned}
\end{equation}
where the last equality follows from~\eqref{eq:weaklimit_1} when testing with $\widetilde{u}$.
This implies the strong convergence $u_k\tends u_k$ in the $H^1$-norm as $k\tends \infty$.
In turn, this implies that $\tilde{h} = H[\nabla \widetilde{u}]$ in $L^2(\Omega)$ by Lipschitz continuity of $H$. Combined with~\eqref{eq:weaklimit_1}, this implies that $\RH(\widetilde{u},\widetilde{m})=0$.
 Moreover, we find that $\frac{\partial H}{\partial p}[\nabla u_k]\tends \frac{\partial H}{\partial p}[\nabla \widetilde{u}]$ in $L^2(\Omega;\R^{\dim})$ as $k\tends \infty$ by Lipschitz continuity of $\frac{\partial H}{\partial p}$. 
Therefore, we obtain
 \begin{align}\label{eq:weaklimit_3}
0 = \lim_{k\tends \infty}\langle \RF(u_k,m_k),\phi \rangle_{H^{-1}\times H^1_0}
= \langle G,\phi\rangle - \int_\Omega \nu \nabla \widetilde{m}{\cdot}\nabla \phi + \widetilde{m}\frac{\partial H}{\partial p}[\nabla \widetilde{u}]{\cdot}\nabla \phi \,\mathrm{d}x,
\end{align}
for all $\phi \in H^1_0(\Omega)$. 
Hence $\RF(\widetilde{u},\widetilde{m})=0$, and thus $\Res(\widetilde{u},\widetilde{m})=0$, i.e.\ the pair $(\widetilde{u},\widetilde{m})$ is a weak solution of~\eqref{sys}. 
Using the uniform $L^\infty$-norm bound $\norm{\frac{\partial H}{\partial p}[\nabla u_k]}_{L^\infty(\Omega;\R^d)}\leq L_H$, we also deduce the weak convergence $\frac{\partial H}{\partial p}[\nabla u_k] {\cdot} \nabla m_k \rightharpoonup \frac{\partial H}{\partial p}[\nabla \widetilde{u}]{\cdot} \nabla \widetilde{m} $ in $L^2(\Omega)$, and thus weak-times-strong convergence gives
\begin{equation}
\begin{aligned}
  \lim_{k\tends \infty}\nu \norm{\nabla m_k}_{\Omega}^2 &= \lim_{k\tends \infty}\left[ \langle G-\RF(u_k,m_k),m_k\rangle_{H^{-1}\times H^1_0} - \int_\Omega m_k \frac{\partial H}{\partial p}[\nabla u_k]{\cdot} \nabla m_k \,\mathrm{d}x  \right] 
  \\ & = \langle G,\widetilde{m}\rangle_{H^{-1}\times H^1_0} - \int_\Omega \widetilde{m}\frac{\partial H}{\partial p}[\nabla \widetilde{u}]{\cdot} \nabla \widetilde{m}\,\mathrm{d}x
= \nu \norm{\nabla \widetilde{m}}_\Omega^2,
\end{aligned}
\end{equation}
where the last identity follows from~\eqref{eq:weaklimit_3} when testing with $\widetilde{m}$.
This implies the strong convergence $m_k\tends m$ in the $H^1$-norm as $k\tends \infty$.
Thus, we conclude that $(\widetilde{u},\widetilde{m})$ is a weak solution of \eqref{sys}, yet
\[
\norm{m-\widetilde{m}}_{H^1(\Omega)}+\norm{u-\widetilde{u}}_{H^1(\Omega)}=\lim_{k\tends \infty}\left[\norm{m-m_k}_{H^1(\Omega)}+\norm{u-u_k}_{H^1(\Omega)}\right]\geq \delta >0.
\]
This contradicts the uniqueness of the solution of~\eqref{sys}, see Remark~\ref{rem-continuous-pb-basic-properties-C1H}.
\end{proof} 

\begin{proof}[Proof of Theorem \ref{thm:error-res-equiv}]
First, note that the continuity bounds of Lemma~\ref{lemma-E-general-est} imply the lower bound 
$\Res(\overline{u},\overline{m})\lesssim \norm{m-\overline{m}}_{H^1(\Omega)}+\norm{u-\overline{u}}_{H^1(\Omega)}$,  where we are using here the continuous embedding of $H^1_0(\Omega)$ into $\mathcal{H}$ so that $\norm{m-\overline{m}}_{\mathcal{H}}\lesssim \norm{m-\overline{m}}_{H^1(\Omega)}$.
Therefore, it remains only to show the upper bound, namely that
\begin{equation}\label{eq:main_thm_proof_4}
\norm{m-\overline{m}}_{H^1(\Omega)}+\norm{u-\overline{u}}_{H^1(\Omega)}
\lesssim \Res(\overline{u},\overline{m}).
\end{equation}
Let $R>0$ be the constant given in~Lemma~\ref{lemma-w-H1-bound}.
Then, Lemma~\ref{lem:residual_stability} implies that there exists a $\mathcal{R}_*>0$ such that $\norm{u-\overline{u}}_{H^1(\Omega)}\leq R$ for any $(\overline{u},\overline{m})\in H^1_0(\Omega)\times H^1_0(\Omega)$ that satisfy $\mathcal{R}(\overline{u},\overline{m})<\mathcal{R}_*$.
Suppose now that we are given $(\overline{u},\overline{m})\in H_0^1(\Omega)\times H^1_0(\Omega)$ with $\overline{m}\geq 0$ a.e.\ in $\Omega$ and $\mathcal{R}(\overline{u},\overline{m})<\mathcal{R}_*$.
We use Lemma~\ref{lemma-zk-H1-bound} to get
\begin{equation}\label{eq:main_thm_proof_1}
\norm{m-\overline{m}}_{H^1(\Omega)}+\norm{u-\overline{u}}_{H^1(\Omega)}
\lesssim \norm{\RF(\overline{u},\overline{m})}_{H^{-1}(\Omega)} +\|u-\overline{u}\|_{H^1(\Omega)}.
\end{equation}
Since $\norm{u-\overline{u}}_{H^1(\Omega)}\leq R$, we then apply Lemma~\ref{lemma-w-H1-bound} to the last term on the right-hand side of~\eqref{eq:main_thm_proof_1} above to find that
\begin{equation}\label{eq:main_thm_proof_2}
\begin{aligned}
\norm{m-\overline{m}}_{H^1(\Omega)}+\norm{u-\overline{u}}_{H^1(\Omega)}
&\lesssim \norm{\RF(\overline{u},\overline{m})}_{H^{-1}(\Omega)} + \|\RH(\overline{u},\overline{m})\|_{H^{-1}(\Omega)}+\|m-\overline{m}\|_{\mathcal{H}}
\\ & \lesssim \Res(\overline{u},\overline{m})+\|m-\overline{m}\|_{\mathcal{H}}.
\end{aligned}
\end{equation}
Then, since by hypothesis $\overline{m} \in H_0^1(\Omega)$ with $\overline{m}\geq 0$ a.e.\ in $\Omega$, we use the bound~\eqref{key-L2-bound} (after taking square-roots) from Lemma~\ref{lemma-key-L2-bound}, which implies that
\begin{multline}\label{eq:main_thm_proof_5}
\norm{m-\overline{m}}_{\mathcal{H}}\lesssim \left(\langle \RH(\overline{u},\overline{m}),\overline{m}-m\rangle_{H^{-1}\times H_0^1} - \langle \RF(\overline{u},\overline{m}),\overline{u}-u\rangle_{H^{-1}\times H_0^1}\right)^{\frac{1}{2}}
\\ \lesssim \Res(\overline{u},\overline{m})^{\frac{1}{2}}\left(\norm{m-\overline{m}}_{H^1(\Omega)}+\norm{u-\overline{u}}_{H^1(\Omega)}\right)^{\frac{1}{2}},
\end{multline}
Therefore, combining~\eqref{eq:main_thm_proof_2} with \eqref{eq:main_thm_proof_5} gives
\begin{multline}\label{eq:main_thm_proof_3}
\norm{m-\overline{m}}_{H^1(\Omega)}+\norm{u-\overline{u}}_{H^1(\Omega)}
\\ \lesssim \Res(\overline{u},\overline{m}) + \Res(\overline{u},\overline{m})^{\frac{1}{2}}\left(\norm{m-\overline{m}}_{H^1(\Omega)}+\norm{u-\overline{u}}_{H^1(\Omega)}\right)^\frac{1}{2}
\\ \lesssim (1+\epsilon^{-1})\Res(\overline{u},\overline{m}) + \epsilon \left(\norm{m-\overline{m}}_{H^1(\Omega)}+\norm{u-\overline{u}}_{H^1(\Omega)}\right),
\end{multline}
for any $\epsilon>0$, where the last inequality follows from Young's inequality with a parameter.
After choosing $\epsilon$ sufficiently small compared to the hidden constant in~\eqref{eq:main_thm_proof_3}, we can then absorb the last term on the right-hand side of~\eqref{eq:main_thm_proof_3} into the left-hand side, which finally gives~\eqref{eq:main_thm_proof_4} and thus completes the proof.
\end{proof}

\section{Stabilized finite element discretizations} \label{sec:stablized-fem}
\subsection{Setting and notation}
\paragraph{The mesh.}
Let $\mathcal{T}$ denote a conforming simplicial mesh of the domain $\Omega$. We adopt the convention that each element $K\in\T$ is closed.
For each element $K\in\T$, we let $h_K$ denote the diameter of $K$. We define the mesh-size function $h_{\T}\in L^\infty(\Omega)$ by $h_{\T}|_K\coloneqq h_K$ for each element $K\in\T$. In the following, the notation for inequalities $a \lesssim b$ will allow the hidden constant to depend on the shape-regularity parameter of $\T$, defined as $\theta_\T\coloneqq \max_{K\in\T}\frac{h_K}{\rho_K}$, where $\rho_K$ denotes the diameter of the largest inscribed balls in $K$. However, the hidden constants will be otherwise independent of the size of the mesh elements.
Two distinct elements of $\T$ are called neighbours if they have non-empty intersection. 
For each $K\in\T$, we let $\TK$ denote the set of neighbouring elements of $K$.

Let $\calV$ denote the set of all vertices of $\T$ and let $\calVi=\calV\cap \Omega$ denote the set of interior vertices of $\T$.
Two distinct vertices are called neighbours if they belong to a common element of $\T$. 
For each vertex $z\in \calV$, we let $\Tz$ denote the set of elements of $\T$ that contain $z$, and we let $\omz\coloneqq\bigcup \{ K^\prime \colon K^\prime \in \Tz\}$ denote its associated vertex patch. Let $h_{\omz}\coloneqq \diam \omz$ denote the diameter of $\omz$.

Let $\calF$ denote the set of all faces of $\T$, and let $\calFi$ denote the subset of all interior faces of $\calF$, i.e.\ all faces that are not contained in $\partial\Omega$.
For each face $F\in\calF$, let $h_F\coloneqq \diam F$ denote the diameter of $F$.
For each element $K\in\T$, we let $\calFiK$ denote the set of interior faces $F\in\calFi$ that are contained in $K$.
For each face $F\in\calF$, we let $\TF$ denote the set of elements that contain $F$, and we let $\omega_F \coloneqq \bigcup_{K\in\TF}K$ denote its associated patch.
For each element $K\in\T$, we define the set $\tTK$ of face-sharing neighbouring elements and the associated patch $\omK$
\begin{equation}
\begin{aligned}
\tTK \coloneqq \bigcup_{F\in\calFKi}\TF, &&& \omK\coloneqq \bigcup_{K^\prime\in\tTK} K^\prime = \bigcup_{F\in\calFKi}\omega_F.
\end{aligned}
\end{equation}
Note that $K\in\tTK$ for all $K\in\T$.
For each vertex $z\in\calV$, we let $\calFiz$ denote the set of interior faces that contain $z$.

Let $\calE$ denote the set of edges of the mesh $\T$, i.e.\ the set of all closed line segments formed by all pairs of neighbouring vertices. Note that for $\dim=2$, edges and faces of the mesh coincide. For each edge $E\in\calE$, we let the set of elements of $\T$ containing~$E$ be denoted by $\T_E\coloneqq \{K\in\T:E\subset K\}$. An edge $E\in\calE$ is called an internal edge if there exists at least one vertex $z_*\in \calVi$ such that $z_*\in E$. We denote the set of all internal edges by $\mathcal{E}^I$. For each $z\in \mathcal{V}$, we let $\mathcal{E}_{z}\coloneqq\{E\in\mathcal{E}:z\in E\}$ denote the set of edges containing the vertex $z\in \mathcal{V}$. Given a simplex $K\in\mathcal{T}$ we define the set
$\calEKI\coloneqq \{E\in \mathcal{E}^I\colon E\subset K\}$ which is the collection of edges of $K$ that are internal edges.

\paragraph{Finite element space.}
The numerical method is computed using a linear finite element space $V(\mathcal{T}) \subset H^1_0(\Omega)$ defined by
\begin{align}
    V(\mathcal{T}) \coloneqq \left\{ v_\mathcal{T} \in H^1_0(\Omega)\colon v_\mathcal{T} \in \mathcal{P}_1(K) \quad \forall K \in \mathcal{T} \right\},
\end{align}
where $\mathcal{P}_1(K)$ denotes the space of real-valued affine functions on $K$, i.e.\ polynomials of degree at most $1$.
Let $\{\psi_z\}_{z\in\calVi}$ denote the standard Lagrange nodal basis for $V(\T)$, which is characterized by the conditions $\psi_z(z)=1$ and $\psi_z(z^\prime)=0$ for all $z^\prime\in \calV\setminus\{z\}$, for all $z\in\calVi$.

\paragraph{Quasi-interpolation operator.}
Let $\IT:H^1_0(\Omega)\rightarrow V(\T)$ denote a quasi-interpolation operator that satisfies the following properties: $\IT v_\T = v_\T$ for all $v_\T\in V(\T)$, and
\begin{equation}\label{eq:quasi_approx_bound_elements}
\left(\sum_{K\in\T} h_K^{2(k-1)}\norm{\nabla^k(v-\IT v)}_K^2 \right)^\frac{1}{2} \lesssim \norm{v}_{H^1(\Omega)} \quad\forall v \in H^1_0(\Omega),\; k\in\{0,1\},
\end{equation}
where the hidden constant depends only on the dimension $d$ and the shape-regularity of~$\T$.
Note that~\eqref{eq:quasi_approx_bound_elements} implies that $\norm{v-\IT v}_{\Omega}\lesssim h_\T \norm{v}_{H^1(\Omega)}$ and $\norm{\IT v}_{H^1(\Omega)}\lesssim \norm{v}_{H^1(\Omega)}$ for all $v\in H^1_0(\Omega)$.
For instance, one may consider the Scott--Zhang quasi-interpolation operator~\cite{ScottZhang1990} where~\eqref{eq:quasi_approx_bound_elements} is given by~\cite[Theorem~4.1]{ScottZhang1990}.
Note that the trace inequality for element faces \cite[Lemma~B.1]{MonkSuli1999} then implies that
\begin{equation}\label{eq:quasi_approx_bound_faces}
\left( \sum_{F\in \mathcal{F}} h_F^{-1}\norm{v-\IT v}_F^2 \right)^{\frac{1}{2}} \lesssim \norm{v}_{H^1(\Omega)}\quad \forall v \in H^1_0(\Omega).
\end{equation}

\subsection{Family of numerical schemes}
In order to satisfy the nonnegativity of the density approximation, which is one of the hypotheses of Theorem~\ref{thm:error-res-equiv}, it is often necessary to include some stabilization in the construction of the numerical discretization. 
To allow for a wide range of possible choices of stabilizations, we develop the \emph{a posteriori} analysis for an abstract stabilized FEM discretization of the general form: find $(u_{\T},m_{\T})\in V(\mathcal{T})\times V(\mathcal{T})$ such that
\begin{subequations}\label{eq:num_scheme}
    \begin{align}
    &\int_{\Omega} \nu \nabla u_\mathcal{T} \cdot \nabla v_\mathcal{T} + H[\nabla u_{\T}] v_{\mathcal{T}} \mathrm{d}x + S_1(u_{\T},m_{\T};v_{\mathcal{T}}) = \langle F[m_{\T}],v_{\mathcal{T}}\rangle_{H^{-1}\times H^1_0},\label{eq:num_scheme_hjb}\\
    &\int_{\Omega} \nu \nabla m_{\T}\cdot \nabla w_{\mathcal{T}} + m_\mathcal{T} \frac{\partial H}{\partial p}[\nabla u_\mathcal{T}]\cdot \nabla w_{\mathcal{T}} \mathrm{d}x + S_2(u_{\T},m_{\T};w_{\mathcal{T}}) = \langle G, w_{\mathcal{T}} \rangle_{H^{-1}\times H^1_0},\label{eq:num_scheme_fp}
\end{align}
\end{subequations}
for all discrete test functions $v_\T,\;w_\T\in V(\T)$.
In~\eqref{eq:num_scheme} above, the terms $S_i(\cdot,\cdot;\cdot):[V(\mathcal{T})]^3\rightarrow \R$, $i\in\{1,2\}$, represent general stabilization terms, see below. 

\paragraph{Assumptions on the stabilizations.}
In order to develop the \emph{a posteriori} analysis for as wide a range of stabilizations as possible, we consider abstract stabilizations that satisfy some structural conditions. See however Section~\ref{subsec:examples} below for some concrete examples.
We will assume throughout this work that the forms $S_i(\cdot,\cdot;\cdot)$ are linear in the third argument, i.e.\ for each $i\in\{1,2\}$, we have $S_i(u_{\T},m_{\T};\lambda v_\mathcal{T}+w_\mathcal{T})= \lambda S_i(u_{\T},m_{\T};v_{\mathcal{T}})+S_i(u_{\T},m_{\T};w_{\mathcal{T}})$ for all $(u_{\T},m_{\T},v_{\mathcal{T}},w_{\mathcal{T}})\in \left[V(\mathcal{T})\right]^4$, for all $\lambda \in \R$. 
Note that this allows the stabilizations to be nonlinear with respect to $u_\T$ and $m_\T$.
The main assumption for the \emph{a posteriori} error analysis is that the stabilizations must be chosen to ensure the nonnegativity of the density approximation. 
In other words, we require that
\begin{enumerate}[label={(H\arabic*)},resume]
\item any solution $(u_{\T},m_{\T})$ of the numerical scheme~\eqref{eq:num_scheme} satisfies $m_{\T}\geq 0$ in $\Omega$.
\label{H:stabilization_main}
\end{enumerate}
Recall that nonnegativity of the density approximation is the natural discrete counterpart of the nonnegativity of the exact density $m$, see Remark~\ref{rem-continuous-pb-basic-properties-C1H}.
We present various examples of possible stabilizations in~Section~\ref{subsec:examples} below.
In Section~\ref{sec:general-estimates}, we show general \emph{a posteriori} error bounds assuming only that the stabilizations satisfy linearity in the third argument and the assumption~\ref{H:stabilization_main}. 

\begin{remark}
In practice, stabilizations are often designed to satisfy stronger conditions with the purpose of ensuring, amongst other things, existence and uniqueness of a numerical solution $(u_\T,m_\T)\in \left[V(\T)\right]^2$ and good \emph{a priori} convergence properties, c.f.~\cite{osborne2024analysis,osborne2024erratum,osborne2023finite,osborne2024near}.
However, in this work, we aim for rather minimal conditions on the stabilization to enable the \emph{a posteriori} error analysis.
The assumptions made above on the stabilization, including~\ref{H:stabilization_main} do not appear to be generally sufficient for proving the existence or uniqueness of a numerical solution. 
Therefore, in the following, we will assume the existence of a numerical solution $(u_\T,m_\T) \in \left[V(\T)\right]^2$. The uniqueness of the numerical solution is however not required for the \emph{a posteriori} analysis.
\end{remark}

\subsection{Examples}\label{subsec:examples}

In order to illustrate some particular choices of the stabilizations $S_i(\cdot,\cdot;\cdot)$ from~\eqref{eq:num_scheme} that we have in mind, in this section we summarize the stabilization methods that were considered in the earlier works \cite{osborne2024analysis,osborne2023finite,osborne2024near} on MFG.
In particular, these works considered linear stabilizations that can be written in the following general form
\begin{equation}\label{eq:linear_stabilization}
\begin{aligned}
S_1(u_\T,m_\T;v_\T)\coloneqq \int_\Omega D_\T\nabla u_\T\cdot \nabla v_\T\dx,
&&& S_2(u_\T,m_\T;v_\T)=\int_\Omega D_\T\nabla m_\T\cdot \nabla v_\T\dx,
\end{aligned}
\end{equation}
for all $(u_\T,m_\T,v_\T)\in [V(\T)]^3$, for some suitably chosen matrix $D_\T\in L^{\infty}\left(\Omega;\Rdd\right)$ with $D_\T(x)\in \Rddsymp$ for a.e.\ $x\in \Omega$, where $\Rddsymp$ denotes the set of symmetric positive semidefinite matrices in $\Rdd$.
Note that the stabilizations in~\eqref{eq:linear_stabilization} are linear in all arguments, and moreover the stabilization $S_1$ for the HJB equation is independent of the density approximation, and likewise the stabilization $S_2$ for the KFP equation is independent of the value function.
In~\cite{osborne2023finite,osborne2024near}, the stabilization matrix $D_\T$ was defined elementwise to be of the form
\begin{equation}\label{edge-tensor-formula}
D_\T|_K\coloneqq \sum_{E\in\calEKI}\edgeweight\, t_E t_E^T\quad \forall K\in\mathcal{T}, 
\end{equation}
where $t_E$ denotes a fixed choice of unit tangent vector to $E$ (the choice of orientation does not matter); and where $\edgeweight\geq 0$ is a suitably chosen weight. 
In particular, it is shown in~\cite[Theorem~4.2]{osborne2023finite} that if the mesh satisfies the Xu--Zikatanov condition{\ifJ, c.f.~\cite{xu1999monotone},\else~\cite{xu1999monotone}\fi} and if the weights are suitably chosen so that $\edgeweight \eqsim L_H \diam E$ for each edge $E$, where $L_H$ is from~\eqref{bounds:lipschitz}, then the resulting stabilization enforces a discrete maximum principle/comparison principle for the discretized problem, so that Assumption~\ref{H:stabilization_main} is satisfied, and moreover the method remains consistent to first-order.
The earlier work~\cite{osborne2024analysis} considered the simpler choice where $D_\T|_K\coloneqq \gamma_{\T,K} \mathbb{I}_\dim $, where $\mathbb{I}_\dim$ denotes the $\dim\times\dim$ identity matrix and where $\gamma_{\T,K}$ is a chosen constant parameter for each $K\in \T$. 
It is known if that if the mesh is strictly acute, and if the parameter $\gamma_{\T,K}$ is chosen to be sufficiently large, c.f.~\cite[Eq.~(5.3)]{osborne2024analysis}, then a discrete maximum principle/comparison principle is again also available so that Assumption~\ref{H:stabilization_main} also holds, see~\cite[Lemma~6.1]{osborne2024analysis}.
The stabilizations considered above are by no means the only ones that are admissible for the purposes of \emph{a posteriori} error analysis, see also for instance~\cite{barrenechea2017edge,barrenechea2018unified,barrenechea2024finite}.

\section{A posteriori error bounds for general stabilizations}\label{sec:general-estimates}

\subsection{Decomposition of the residuals}
The starting point for the derivation of \emph{a posteriori} error estimators for the approximations are some general abstract bounds on the residuals. 
We initially adopt a similar approach as in~\cite{TobiskaVerfurth2015}.
Recall that the residual operators $\Resi$ are defined in~\eqref{FEM-Res} above.
Supposing that~$(u_\T,m_\T)\in \left[V(\T)\right]^2$ solves~\eqref{eq:num_scheme}, 
we start by splitting the residuals $\Resi(u_\T,m_\T)$ between one component that will lead to standard residual estimators, and another component that comprises the effect of stabilization.
To do so, for each $i\in\{1,2\}$ and each $v\in H^1_0(\Omega)$, we have, for any $(u_\T,m_\T)\in \left[V(\T)\right]^2$,
\begin{equation}\label{eq:residual_split_1}
\langle \Resi(u_\T,m_\T),v\rangle = \langle \Resi(u_\T,m_\T),v-\IT v \rangle_{H^{-1}\times H^1_0} + \langle \Resi(m_{\T},u_\mathcal{T}), \IT v\rangle_{H^{-1}\times H^1_0},
\end{equation}
where $\IT$ was the quasi-interpolation operator considered above.
Note that the definition of $\Resi$ implies that if $(u_\T,m_\T)$ is a solution of the numerical scheme~\eqref{eq:num_scheme}, then the final term on the right-hand side of~\eqref{eq:residual_split_1} satisfies
\begin{equation}\label{eq:residual_split_2}
\begin{aligned}
\langle \Resi(m_{\T},u_\mathcal{T}), \IT v\rangle_{H^{-1}\times H^1_0} = S_i(u_\T,m_\T;\IT v) &&& \forall i\in\{1,2\}.
\end{aligned}
\end{equation}
Therefore, we have the following decomposition of the residual into two key components: for any solution~$(u_\T,m_\T)\in \left[V(\T)\right]^2$ of \eqref{eq:num_scheme},
\begin{multline}\label{eq:residual_split_3}
\norm{\Resi(u_\T,m_\T)}_{H^{-1}(\Omega)}\leq \sup_{v\in H^1_0(\Omega)\setminus\{0\}}\frac{\langle \Resi(u_\T,m_\T),v-\IT v \rangle_{H^{-1}\times H^1_0}}{\norm{\nabla v}_\Omega} 
\\ + \sup_{v\in H^1_0(\Omega)\setminus\{0\}}\frac{S_i(u_\T,m_\T;\IT v)}{\norm{\nabla v}_\Omega}.
\end{multline}
We will see shortly below that the first term on the right-hand side of~\eqref{eq:residual_split_3} will lead towards the well-known residual estimators, whereas the second term will lead to the stabilization estimators.

\subsection{Residual and stabilization estimators}
\paragraph{Residual estimators.}
The derivation of computable estimators for the first term on the right-hand side of~\eqref{eq:residual_split_3} is quite standard by now, and there is a range of possible approaches.
For the sake of simplicity and illustration, we present here the classical residual estimators following a standard approach, see for instance the textbook~\cite{verfurth2013posteriori}.
In the current setting, the classical definition of residual estimators require that $F[m_\T] \in L^2(\Omega)$, so we make the assumption  that $F\colon \mathcal{H}\tends L^2(\Omega)$ throughout the following analysis. We also assume that $G\in L^2(\Omega)$.
However, there are approaches to constructing \emph{a posteriori} estimators for distributional right-hand sides, see e.g.~\cite{ErnSmearsVohralik2017,KreuzerVeeser2021}.

For a given solution $(u_\T,m_\T)\in \left[V(\T)\right]^2$ of~\eqref{eq:num_scheme}, we start by defining the volume residual terms $\rKi\in L^2(K)$, $K\in\T$, by
\begin{subequations}\label{eq:residual_estimators_volume}
\begin{align}
\rKH &\coloneqq F[m_\T]|_K + \nu \Delta (u_\T|_K) -  H[\nabla u_\T]|_K , \\
\rKF &\coloneqq  G|_K + \nu \Delta (m_\T|_K) + \Div\left.\left(m_\T\frac{\partial H}{\partial p}[\nabla u_\T] \right)\right|_K.
\end{align}
\end{subequations}
Note that the dependence of $\rKi$ on $(u_T,m_\T)$ is left implicit to alleviate the notation.
Also, since functions in $V(\T)$ are piecewise linear, the contributions from the piecewise Laplacians $\Delta (u_\T|_K) $ and $\Delta (m_\T|_K)$ vanish identically; nevertheless, we write them explicitly in~\eqref{eq:residual_estimators_volume} to highlight the conceptual meaning of the volume residual terms.
Observe also that since $\nabla u_\T$ is piecewise constant with respect to the mesh $\T$, it follows from the hypotheses on $H$ that, for each $K\in\T$, the map $x\mapsto \frac{\partial H}{\partial p}[\nabla u_\T](x)$ is $H^1$-regular on $K$. This ensures that $\rKF \in L^2(K)$ for each $K\in\T$.

Next, for each interior face $F\in \calFi$, we define the jump residual terms $\ji \in L^2(F)$ by
\begin{subequations}\label{eq:residual_estimators_jumps}
\begin{align}
\jH &\coloneqq \nu \jump{ \nabla u_\T \cdot n_F },\\
\jF &\coloneqq \nu \jump{\nabla m_\T \cdot n_F } +   m_\T|_F\jump{ \frac{\partial H}{\partial p}[\nabla u_\T]\cdot n_F }.
\end{align}
\end{subequations}
Again, we leave the dependence of the jump residual on $(u_\T,m_\T)$ implicit.
In the notation above, the vector~$n_F\in \R^\dim$ denotes an arbitrary choice of unit normal to the face $F$, and the jump operator $\jump{\cdot}$ on $F\in \calFi$ is defined by $\jump{w}\coloneqq \tau_F^{-}w - \tau_F^{+}w$ for all $w\in BV(\Omega)$, where $\tau_F^{-}$, respectively $\tau_F^+$, denotes the trace operator from the side of $F$ for which $n_F$ is \emph{outward} pointing, respectively \emph{inward} pointing. Note that with this definition, the jump residual terms defined in~\eqref{eq:residual_estimators_jumps} are invariant with respect to the choice of normal vector~$n_F$, since reversing the orientation of $n_F$ also reverses the sign of the jump operator.
Note that since $\nabla u_\T$ is piecewise constant, it follows from the hypotheses on the problem data that $x\mapsto \frac{\partial H}{\partial p}[\nabla u_\T](x)$ is $H^1$-regular on each element $K\in \T$, and thus has well-defined one-sided traces in $L^2(F)$ on each face $F$. This ensures that $\jF$ is well-defined and is in $L^2(F)$ for each $F\in \calFi$.
Note also that in~\eqref{eq:residual_estimators_jumps} the term $m_\T|_F$ appears outside the jump operator owing to the continuity of $m_\T$.

With the residual and jump terms defined above, we define the elementwise residual estimator $\etaKi$, $K\in\T$, $i\in \{1,2\}$, by
\begin{equation}\label{eq:element_estimators}
\left[\etaKi\right]^2 \coloneqq h_K^2\norm{\rKi}_K^2 + \sum_{F\in \calFiK} h_F \norm{\ji}_F^2.
\end{equation}
The \emph{residual estimators}, denoted by $\etaResi$, $i\in \{1,2\}$, are then defined by
\begin{equation}\label{eq:residual-estimator}
\etaResi \coloneqq \left(\sum_{K\in\T}[\etaKi]^2\right)^{\frac{1}{2}}, \quad i\in\{1,2\}.
\end{equation}

\begin{remark}[Alternative estimators]\label{remark:alternative-estimators}
Note that one could also consider the numerous alternatives to the classical residual estimators, many of which have certain theoretical and/or practical advantages, especially in the case of distributional data~\cite{ErnSmearsVohralik2017,KreuzerVeeser2021}.
In practice, it is also desirable to employ \emph{a posteriori} estimators that control the linearization and algebraic errors committed in solving the discretized problems, see~\cite{ErnVohralik2013}; however a detailed analysis for this goes beyond the scope of this work.
\end{remark}

The next theorem states the well-known upper bound for the residuals with the above estimators.
\begin{theorem}[Upper bound on the residuals]\label{thm:residual_bound}
Suppose that $F\colon \mathcal{H}\rightarrow L^2(\Omega)$. 
Then, for any $(u_\T,m_\T)\in \left[V(\T)\right]^2$, we have
\begin{equation}\label{eq:residual_upper_bound}
\sup_{v\in H^1_0(\Omega)\setminus \{0\}}\frac{\langle \Resi(u_\T,m_\T),v-\IT v \rangle_{H^{-1}\times H^1_0}}{\norm{\nabla v}_\Omega} \lesssim \etaResi  \quad \forall i\in \{1,2\}.
\end{equation}
\end{theorem}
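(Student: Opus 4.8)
The plan is to follow the classical residual-estimator argument (see e.g.~\cite{verfurth2013posteriori}), using the fact that $\IT v$ is a valid discrete test function together with the approximation bounds~\eqref{eq:quasi_approx_bound_elements} and~\eqref{eq:quasi_approx_bound_faces}. Fix $i\in\{1,2\}$ and $v\in H^1_0(\Omega)\setminus\{0\}$. First I would write out $\langle \Resi(u_\T,m_\T),v-\IT v\rangle_{H^{-1}\times H^1_0}$ directly from the definition~\eqref{FEM-Res} and integrate by parts elementwise over each $K\in\T$. Since $v-\IT v\in H^1_0(\Omega)$, the element-boundary contributions assemble into face terms over interior faces only, and the resulting identity reads
\begin{equation*}
\langle \Resi(u_\T,m_\T),v-\IT v\rangle_{H^{-1}\times H^1_0} = \sum_{K\in\T}\int_K \rKi\,(v-\IT v)\dx + \sum_{F\in\calFi}\int_F \ji\,(v-\IT v)\ds,
\end{equation*}
with $\rKi$, $\ji$ as defined in~\eqref{eq:residual_estimators_volume}--\eqref{eq:residual_estimators_jumps}; here I use that $\nabla u_\T$, $\nabla m_\T$ are piecewise constant so that $\Delta(u_\T|_K)=\Delta(m_\T|_K)=0$, and that $x\mapsto\frac{\partial H}{\partial p}[\nabla u_\T](x)$ is $H^1$-regular on each element so the distributional divergence and one-sided traces are honest $L^2$ objects.

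Next I would apply Cauchy--Schwarz on each element and each face:
\begin{equation*}
\sum_{K\in\T}\int_K \rKi\,(v-\IT v)\dx \leq \left(\sum_{K\in\T} h_K^2\norm{\rKi}_K^2\right)^{1/2}\left(\sum_{K\in\T} h_K^{-2}\norm{v-\IT v}_K^2\right)^{1/2},
\end{equation*}
and similarly for the face sum with the weights $h_F^{1/2}$ and $h_F^{-1/2}$. The second factor in the element sum is $\lesssim \norm{v}_{H^1(\Omega)}$ by~\eqref{eq:quasi_approx_bound_elements} with $k=0$, and the second factor in the face sum is $\lesssim \norm{v}_{H^1(\Omega)}$ by~\eqref{eq:quasi_approx_bound_faces}. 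Recognising the first factors as (parts of) $\etaResi$ via~\eqref{eq:element_estimators}--\eqref{eq:residual-estimator}, and using that $\norm{v}_{H^1(\Omega)}\eqsim\norm{\nabla v}_\Omega$ on $H^1_0(\Omega)$ by Poincaré, one divides through by $\norm{\nabla v}_\Omega$ and takes the supremum over $v$ to obtain~\eqref{eq:residual_upper_bound}. A small bookkeeping point: each interior face $F$ belongs to exactly two elements, so $\sum_{F\in\calFi} h_F\norm{\ji}_F^2 \leq \sum_{K\in\T}\sum_{F\in\calFiK} h_F\norm{\ji}_F^2$, which is controlled by $[\etaResi]^2$; the shape-regularity of $\T$ also lets one replace $h_F$ by $h_K$ up to a constant wherever convenient.

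This statement is essentially standard, so there is no genuine obstacle; the only points requiring care are (i) justifying the elementwise integration by parts and the $L^2$-regularity of the KFP flux $m_\T\frac{\partial H}{\partial p}[\nabla u_\T]$, which the paper has already flagged in the discussion around~\eqref{eq:residual_estimators_volume}--\eqref{eq:residual_estimators_jumps}, and (ii) making sure the estimator constants absorb the factor-of-two overlap in the face sum and the shape-regularity-dependent equivalences between $h_F$ and $h_K$. Note the hypothesis $F\colon\mathcal{H}\to L^2(\Omega)$ (and $G\in L^2(\Omega)$) is exactly what is needed for $\rKi\in L^2(K)$, so the estimator is well-defined; no smallness condition or nonnegativity is required here, since this bound concerns only the "consistency part" of the residual split~\eqref{eq:residual_split_3} and not the full error-residual equivalence.
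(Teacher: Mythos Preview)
Your proposal is correct and follows essentially the same approach as the paper: elementwise integration by parts to obtain the representation~\eqref{eq:residual_representation}, followed by Cauchy--Schwarz and the quasi-interpolation bounds~\eqref{eq:quasi_approx_bound_elements}--\eqref{eq:quasi_approx_bound_faces}. The only discrepancy is the sign on the face sum (the paper has a minus, consistent with its jump convention), but this is immaterial for the bound and your additional bookkeeping remarks on face overlaps and $h_F\eqsim h_K$ are all in order.
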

\begin{proof}
The essential techniques of the proof are already well-known, see for instance \cite[Theorem~3.58, p.~135]{verfurth2013posteriori}, so we only sketch the main steps.
For each $i\in\{1,2\}$, observe that the residual term $\langle \Resi(u_\T,m_\T), v-\IT v\rangle $ vanishes for any test function $v$ that belongs to the discrete space $V(\T)$. 
In other words, the space $V(\T)$ belongs to the kernel of the linear functional $(I-\IT^*)\Resi(u_\T,m_\T)\in H^{-1}(\Omega)$ where $\IT^* \colon H^{-1}(\Omega)\subset V(\T)^*\tends H^{-1}(\Omega)$ denotes the formal adjoint of $\IT$.
Furthermore, for any $v\in H^1_0(\Omega)$, using integration by parts over the elements of $\T$, we obtain the identity
\begin{equation}\label{eq:residual_representation}
\langle \Resi(u_\T,m_\T), v-\IT v\rangle_{H^{-1}\times H^1_0} = \sum_{K\in \T} \int_K \rKi(v-\IT v) \dx - \sum_{F\in\calFi}\int_F \ji (v-\IT v) \ds.
\end{equation}
We then obtain~\eqref{eq:residual_upper_bound} by applying the Cauchy--Schwarz inequality to~\eqref{eq:residual_representation}, and applying the approximation properties of $\IT$ from~\eqref{eq:quasi_approx_bound_elements} and~\eqref{eq:quasi_approx_bound_faces}.
\end{proof}

\paragraph{Stabilization estimators.}
We now turn to bounding the final term in~\eqref{eq:residual_split_3}. 
Our approach here differs from the one in~\cite{TobiskaVerfurth2015} in order to handle the very broad class of abstract stabilizations considered above.
Note that, for any $(u_\T,m_\T)\in \left[V(\T)\right]^2$,
\begin{equation}
\begin{split}
 \sup_{v\in H^1_0(\Omega)\setminus\{0\}}\frac{S_i(u_\T,m_\T;\IT v)}{\norm{\nabla v}_\Omega} & = \sup_{v\in H^1_0(\Omega)\setminus\{0\}} \frac{\norm{\nabla \IT v}_{\Omega}}{\norm{\nabla v }_{\Omega}} \frac{S_i(u_\T,m_\T;\IT v)}{\norm{\nabla \IT v}_\Omega } 
 \\ &\lesssim \sup_{v_\T\in V(\T)\setminus\{0\}}\frac{S_i(u_\T,m_\T;v_\T)}{\norm{\nabla v_\T}_\Omega},
 \end{split}
\end{equation}
where in passing to the final inequality above, we have used~\eqref{eq:quasi_approx_bound_elements}, which entails the $H^1$-stability of $\IT$, and we have also used the Poincar\'e inequality.
This leads us to consider the \emph{stabilization estimators}
\begin{equation}\label{eq:stab_estimator}
\etastabi \coloneqq \sup_{v_\T\in V(\T)\setminus\{0\}}
\frac{S_i(u_\T,m_\T;v_\T)}{\norm{\nabla v_\T}_\Omega} , \quad i\in\{1,2\}.
\end{equation} 
As above, we leave implicit the dependence of $\etastabi$ on $(u_\T,m_\T)$ in order to alleviate the notation.
Observe that if $(u_\T,m_\T)$ solves~\eqref{eq:num_scheme}, then $S_i(u_\T,m_\T;v_\T)=\langle \Resi(u_\T,m_\T),v_\T\rangle_{H^{-1}\times H^1_0}$ for all $v_\T\in V(\T)$. 
Thus the stabilization estimators $\etastabi$ satisfy the global efficiency bound
\begin{equation}\label{eq:stabilization_efficiency}
    \etastabi  \lesssim \norm{\Resi(u_\T,m_\T)}_{H^{-1}(\Omega)} \lesssim \norm{u-u_\T}_{H^{1}(\Omega)}+\norm{m-m_\T}_{H^{1}(\Omega)},
\end{equation}
for each $i\in\{1,2\}$.
However, in the case of general stabilizations, it is not known how to localize $\etastabi$ whilst retaining local efficiency. 
This motivates the analysis in Section~\ref{sec:affine_preserving_stab} for the case of affine-preserving stabilizations.

\begin{remark}[Practical computation of stabilization estimators]\label{rem:discrete_computation}
Observe that the stabilization estimator is a dual norm of the stabilization acting over the space of discrete test functions, so it is a computable quantity.
In practice, it is often cheaper computationally to use well-known preconditioners for elliptic problems that lead to approximations $\widetilde{\eta}_{\mathrm{stab},i} \approx \etastabi$ satisfying the equivalence
\begin{equation}\label{eq:equivalence_stabilization}
\gamma\,\etastabi  \leq \widetilde{\eta}_{\mathrm{stab},i}\leq \Gamma \,\etastabi  \quad \forall i\in\{1,2\},
\end{equation}
for some fixed positive constants $\gamma$, $\Gamma$ that do not depend on the mesh-size of~$\T$.
Indeed, consider a positive definite symmetric preconditioner $P_\T\colon V(\T)\rightarrow V(\T)^*$ for the discrete Laplacian on $V(\T)$, such that the action of $P_\T^{-1}\colon V(\T)^*\rightarrow V(\T)$ is easily computable (e.g.\ a single multigrid V-cycle), and such that $P_\T$ satisfies the spectral equivalence property: for some fixed $\gamma$, $\Gamma >0$ independent of $\T$, $\gamma \norm{v_\T}_{P_\T}\leq \norm{\nabla v_\T}_\Omega \leq \Gamma\norm{v_\T}_{P_\T}$ for all $v_\T\in V(\T)$, where the norm $\norm{\cdot}_{P_\T}\coloneqq \sqrt{\langle P_\T \cdot, \cdot\rangle_{V(\T)^*\times V(\T)}}$.
Then, we compute 
\begin{equation}
\begin{aligned}
w_{\T,i}\coloneqq P_\T^{-1} S_i(u_\T,m_\T;\cdot)\in V(\T), &&& \widetilde{\eta}_{\mathrm{stab},i}\coloneqq \sqrt{S_i(u_\T,m_\T;w_{\T,i})}=\norm{w_{\T,i}}_{P_\T}.
\end{aligned}
\end{equation}
It follows from the spectral equivalence property of $P_\T$ that $\widetilde{\eta}_{\mathrm{stab},i}$ satisfies the equivalence~\eqref{eq:equivalence_stabilization}.
Thus, up to the constants in~\eqref{eq:equivalence_stabilization}, one can often approximate $\etastabi$ at a smaller computation cost. 
In the following analysis we shall focus on the discussion the stabilization estimators $\etastabi$ defined in~\eqref{eq:stab_estimator}, yet we note that these can be substituted for any equivalent approximation throughout this work with identical results up to the constants.
For an introduction to preconditioners in the context of FEM, including various possible choices for $P_\T$, we refer the reader to the textbook~\cite{ElmanSilvesterWathen2014}.
Note also that in Section~\ref{sec:affine_preserving_stab} below, we show that the computation (or approximation) of the stabilization estimators can be avoided in the case of affine-preserving stabilizations.
\end{remark}

\subsection{Error bounds for general stabilizations}

We now define the total estimator
\begin{equation} \label{eq:total-estimator}
\eta(u_\T,m_\T) \coloneqq \sum_{i=1}^2\left[\etaResi+\etastabi\right].
\end{equation}
In Theorem~\ref{thm:error_upper_bound} below we give the main upper bound on the error in the general case where~\ref{H:stabilization_main} holds.
This result shows that, provided that the total estimator $\eta(u_\T,m_\T)$ is sufficiently small, then the estimator gives an upper bound on the error between the exact solution and its numerical approximation.

\begin{theorem}[Upper bound on the error]\label{thm:error_upper_bound}
Assume~\ref{H:stabilization_main}, that $G\in L^2(\Omega)$, and that $F\colon \mathcal{H}\to L^2(\Omega)$.
Suppose that $(u_\T,m_\T)\in \left[V(\T)\right]^2$ is a solution of~\eqref{eq:num_scheme}.
There exists an $\widetilde{\mathcal{R}}_*>0$ such that if $\eta(u_\T,m_\T) \leq \widetilde{\mathcal{R}}_*$, then
\begin{equation}\label{eq:error_upper_bound}
\norm{u-u_\T}_{H^1(\Omega)}+\norm{m-m_\T}_{H^1(\Omega)}\lesssim \eta (u_\T,m_\T).
\end{equation}
The constant $\widetilde{\mathcal{R}}_*$ depends possibly on the problem data in~\eqref{sys} and on $\theta_\T$ the shape-regularity parameter of $\T$, but not on $(u_\T,m_\T)$ or the size of the mesh.
\end{theorem}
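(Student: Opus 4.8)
The plan is to deduce Theorem~\ref{thm:error_upper_bound} from the continuous local equivalence result, Theorem~\ref{thm:error-res-equiv}, by verifying that the pair $(u_\T,m_\T)$ meets the hypotheses of that theorem, with the total residual $\Res(u_\T,m_\T)$ controlled by the computable estimator $\eta(u_\T,m_\T)$. First I would observe that Assumption~\ref{H:stabilization_main} gives precisely $m_\T\geq 0$ a.e.\ in $\Omega$, so the first condition in~\eqref{eq:error_res_equiv_condition} is automatic. The crux is then to establish the estimate
\begin{equation}\label{eq:proofprop_key}
\Res(u_\T,m_\T)\lesssim \eta(u_\T,m_\T).
\end{equation}
This follows by combining the residual decomposition~\eqref{eq:residual_split_3} with Theorem~\ref{thm:residual_bound} (which bounds the first term on the right-hand side of~\eqref{eq:residual_split_3} by $\etaResi$) and with the computation preceding~\eqref{eq:stab_estimator} (which bounds the second term, after using the $H^1$-stability of $\IT$ and Poincar\'e, by $\etastabi$). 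Summing the squares over $i\in\{1,2\}$ and taking square roots, and recalling the definition~\eqref{eq:total-estimator}, yields~\eqref{eq:proofprop_key} with a hidden constant depending only on $\theta_\T$, $d$, and the problem data.

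Next I would handle the smallness threshold. Let $\mathcal{R}_*>0$ be the constant furnished by Theorem~\ref{thm:error-res-equiv}, and let $C_0$ be the hidden constant in~\eqref{eq:proofprop_key}, so $\Res(u_\T,m_\T)\leq C_0\,\eta(u_\T,m_\T)$. Set $\widetilde{\mathcal{R}}_*\coloneqq \mathcal{R}_*/C_0$. Then the assumption $\eta(u_\T,m_\T)\leq \widetilde{\mathcal{R}}_*$ forces $\Res(u_\T,m_\T)\leq \mathcal{R}_*$, so the second condition in~\eqref{eq:error_res_equiv_condition} is met as well. Theorem~\ref{thm:error-res-equiv} then gives the equivalence~\eqref{eq:error_res_equivalence}, in particular the upper bound
\begin{equation*}
\norm{u-u_\T}_{H^1(\Omega)}+\norm{m-m_\T}_{H^1(\Omega)}\lesssim \Res(u_\T,m_\T)\lesssim \eta(u_\T,m_\T),
\end{equation*}
which is~\eqref{eq:error_upper_bound}. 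The constant $\widetilde{\mathcal{R}}_*$ depends on $\mathcal{R}_*$ (problem data) and on $C_0$ (problem data and $\theta_\T$), but is independent of the mesh-size and of $(u_\T,m_\T)$, as claimed.

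The only genuinely delicate point is that the residual decomposition identity~\eqref{eq:residual_split_2} — and hence~\eqref{eq:residual_split_3} — requires that $(u_\T,m_\T)$ actually \emph{solves} the numerical scheme~\eqref{eq:num_scheme}, which is part of the hypotheses, so this is not really an obstacle. The assumptions $G\in L^2(\Omega)$ and $F\colon\mathcal{H}\to L^2(\Omega)$ are needed only so that the volume residual terms $\rKi$ in~\eqref{eq:residual_estimators_volume} are well-defined $L^2$ functions and Theorem~\ref{thm:residual_bound} applies; I would just note this explicitly. I expect the main conceptual content to be entirely contained in Theorem~\ref{thm:error-res-equiv}, so that the present proof is essentially a bookkeeping argument assembling~\eqref{eq:residual_split_3}, Theorem~\ref{thm:residual_bound}, the stabilization-estimator bound, and the choice of $\widetilde{\mathcal{R}}_*$; the only thing to be careful about is tracking that all hidden constants are mesh-size-independent and that the chain of implications around the smallness condition is logically airtight.
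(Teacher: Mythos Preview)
Your proposal is correct and follows essentially the same approach as the paper's own proof: use \ref{H:stabilization_main} for nonnegativity of $m_\T$, combine \eqref{eq:residual_split_3}, Theorem~\ref{thm:residual_bound}, and the bound leading to~\eqref{eq:stab_estimator} to obtain $\Res(u_\T,m_\T)\lesssim\eta(u_\T,m_\T)$, and then invoke Theorem~\ref{thm:error-res-equiv} with $\widetilde{\mathcal{R}}_*$ chosen so that the smallness condition~\eqref{eq:error_res_equiv_condition} is met. Your write-up is in fact slightly more explicit than the paper's about the definition of $\widetilde{\mathcal{R}}_*$ and the tracking of constants, but the argument is the same.
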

\begin{proof}
First, recall that~\ref{H:stabilization_main} states that if $(u_\T,m_\T)$ is a solution of~\eqref{eq:num_scheme}, then $m_\T\geq 0$ in $\Omega$.
Next, by combining the bound~\eqref{eq:residual_split_3}, Theorem~\ref{thm:residual_bound} and \eqref{eq:stab_estimator}, we see that the residual $\mathcal{R}(u_\T,m_\T)$ satisfies
\begin{equation}\label{eq:error_upper_bound_1}
\mathcal{R}(u_\T,m_\T)\lesssim \eta (u_\T,m_\T).
\end{equation}
Therefore, it follows that if $\eta(u_\T,m_\T)$ is sufficiently small, then the pair $(u_\T,m_\T)$ satisfies the condition~\eqref{eq:error_res_equiv_condition} from Theorem~\ref{thm:error-res-equiv}. 
Thus we obtain~\eqref{eq:error_upper_bound} from~\eqref{eq:error_res_equivalence} and from~\eqref{eq:error_upper_bound_1}.
\end{proof}

We now consider the efficiency of the estimators, i.e.\ we show that the estimators are bounded from above by the error, up to some data oscillation terms.
These data oscillation terms measure the approximability of the data and the nonlinear terms $F[m_\T]$, $H[\nabla u_\T]$ and $\frac{\partial H}{\partial p}[\nabla u_\T]$ by piecewise polynomial functions.
In order to define the data oscillation terms, we consider a fixed but arbitrary polynomial degree $\kappa\geq 0$, and we define the following approximation operators.
For each $K\in\T$ and each nonnegative integer $\kappa\geq 0$, let $\Pi_{K,\kappa}\colon L^2(K)\rightarrow \mathcal{P}_\kappa(K)$ denote the $L^2$-projection operator onto polynomials of degree at most $\kappa$ defined on $F$.
For each face $F\in\calFi$, let $\Pi_{F,\kappa}\coloneq L^2(F)\rightarrow \mathcal{P}_\kappa(F)$ denote the $L^2$-projection operator onto the space of polynomials of degree at most $\kappa$ defined over $F$.
For each element $K\in\T$, $i\in\{1,2\}$ we define the data oscillation terms
\begin{equation}\label{eq:osc}
[\oscKqi]^2 \coloneqq \sum_{K^\prime \in \tTK} h_{K^\prime}^2 \norm{r_{K^\prime,i}-\Pi_{K^\prime,\kappa}r_{K^\prime,i}}_{K^\prime}^2 + \sum_{F\in\calFKi} h_F\norm{\ji-\Pi_{F,\kappa}\ji}^2_F.
\end{equation}
Observe that
\begin{equation}
[\oscKqH]^2= \sum_{K^\prime \in \tTK} h_{K^\prime}^2\norm{\left(\mathrm{I}-\Pi_{K^\prime,\kappa}\right)\left(F[m_\T]-H[\nabla u_\T]\right)}_{K^\prime}^2.
\end{equation}
Additionally, observe that
\begin{multline}
 [\oscKqF]^2 = \sum_{K^\prime\in \TK} h_{K^\prime}^2\norm{\left(\mathrm{I}-\Pi_{K^\prime,\kappa}\right)\left(G+\Div\left(m_\T\frac{\partial H}{\partial p}[\nabla u_T] \right)\right)}_{K^\prime}^2
 \\ + \sum_{F\in \calFKi} h_F\norm{(\mathrm{I}-\Pi_{F,\kappa})\left(m_\T\jump{\frac{\partial H}{\partial p}[\nabla u_T]\cdot n_F}\right) }_F^2.
\end{multline}

The following theorem shows the local efficiency of the residual estimators \eqref{eq:residual-estimator}. 
\begin{theorem}[Local efficiency of residual estimators]\label{theorem:eta-lower-bound}
Let $\kappa\geq 0$ be a nonnegative integer.
Assume~\ref{H:stabilization_main}, that $G\in L^2(\Omega)$, and that $F\colon \mathcal{H}\rightarrow L^2(\Omega)$.
For any $(u_\T,m_\T)\in \left[V(\T)\right]^2$, and any $K \in \mathcal{T}$, it holds that
\begin{subequations}\label{eq:eta-lower-bounds}
\begin{align}
\etaKH &\lesssim \norm{F[m]-F[m_\T]}_{H^{-1}(\omK)}+\norm{\nabla(u-u_\T)}_{\omK} + \oscKqH,\label{eq:eta-lower-bound_1}
\\
\etaKF &\lesssim \norm{m-m_\T}_{H^1(\omK)}+\norm{\nabla(u-u_\T)}_{
\omK} + \oscKqF,\label{eq:eta-lower-bound_2}
\end{align}
\end{subequations}
where the hidden constants depend on the shape-regularity of $\mathcal{T}$, the constants $M_\infty$, $L_H$, and $L_{H_p}$, $\nu$, and~$\kappa$.
\end{theorem}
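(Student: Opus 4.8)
The plan is to use the standard bubble-function technique for residual estimators, e.g.~\cite[Chapter~1]{verfurth2013posteriori}; the only work specific to the present setting is to establish \emph{localised} versions of the residual continuity bounds of Lemma~\ref{lemma-E-general-est} and to verify that all non-polynomial contributions of the data and of the nonlinear terms are absorbed into the oscillation terms $\oscKqi$. As a preliminary step I would show that, for any open set $\omega\subset\Omega$,
\[
\norm{\RH(u_\T,m_\T)}_{H^{-1}(\omega)}\lesssim \norm{F[m]-F[m_\T]}_{H^{-1}(\omega)}+\norm{\nabla(u-u_\T)}_\omega,\qquad
\norm{\RF(u_\T,m_\T)}_{H^{-1}(\omega)}\lesssim \norm{m-m_\T}_{H^1(\omega)}+\norm{\nabla(u-u_\T)}_\omega,
\]
where $H^{-1}(\omega)$ denotes the dual of $H^1_0(\omega)$. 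This follows exactly as in the proof of Lemma~\ref{lemma-E-general-est}: restricting the test function to $H^1_0(\omega)$ localises every integral in the identities \eqref{res-est-1}--\eqref{res-est-2}, the estimates \eqref{eq:R2_bound_2}--\eqref{eq:R2_bound_3} localise verbatim (using $\Mfty$, $L_H$, $L_{H_p}$), and the zeroth-order term $\norm{u-u_\T}_\omega$ is absorbed into $\norm{\nabla(u-u_\T)}_\omega$ by Poincar\'e's inequality on $\omega$ (whose constant is bounded by $\diam\Omega$). I would also use the elementwise integration-by-parts identity underlying \eqref{eq:residual_representation}, valid for \emph{every} $w\in H^1_0(\Omega)$, namely $\langle\Resi(u_\T,m_\T),w\rangle=\sum_{K\in\T}\int_K \rKi\,w-\sum_{F\in\calFi}\int_F \ji\,w$.

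\textbf{Volume term.} Fix $K\in\T$ and $i\in\{1,2\}$, set $r_{K,\kappa,i}\coloneqq\Pi_{K,\kappa}\rKi$ and $v_K\coloneqq b_K r_{K,\kappa,i}$ with $b_K$ the interior bubble on $K$, extended by zero so that $v_K\in H^1_0(K)$. Standard bubble equivalences give $\norm{r_{K,\kappa,i}}_K^2\lesssim\int_K b_K r_{K,\kappa,i}^2$, $\norm{v_K}_K\lesssim\norm{r_{K,\kappa,i}}_K$ and $\norm{\nabla v_K}_K\lesssim h_K^{-1}\norm{r_{K,\kappa,i}}_K$. Since $v_K$ vanishes on $\partial K$, only the term over $K$ survives in the representation above, so $\norm{r_{K,\kappa,i}}_K^2\lesssim\langle\Resi(u_\T,m_\T),v_K\rangle+\int_K(r_{K,\kappa,i}-\rKi)v_K$. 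Bounding the pairing by $\norm{\Resi(u_\T,m_\T)}_{H^{-1}(K)}\norm{\nabla v_K}_K$, inserting the preliminary step and the bubble estimates, and cancelling one power of $\norm{r_{K,\kappa,i}}_K$, I obtain $h_K\norm{\rKi}_K$ bounded by the appropriate local error norm on $K$ plus $h_K\norm{(\mathrm{I}-\Pi_{K,\kappa})\rKi}_K$, after a final triangle inequality.

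\textbf{Jump term.} Fix an interior face $F\in\calFKi$, write $\TF=\{K_1,K_2\}$, $\omega_F=K_1\cup K_2$, set $j_{F,\kappa,i}\coloneqq\Pi_{F,\kappa}\ji$, choose a degree-preserving extension $E_F$ from $F$ to $\omega_F$, and put $v_F\coloneqq b_F E_F(j_{F,\kappa,i})$ with $b_F$ the face bubble; then $v_F\in H^1_0(\omega_F)$ and $v_F$ vanishes on every mesh face except $F$, so the representation gives $\int_F\ji v_F=\sum_{K'\in\TF}\int_{K'}r_{K',i}v_F-\langle\Resi(u_\T,m_\T),v_F\rangle$. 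Using $\norm{j_{F,\kappa,i}}_F^2\lesssim\int_F b_F j_{F,\kappa,i}^2$ together with the scalings $\norm{v_F}_F\lesssim\norm{j_{F,\kappa,i}}_F$, $\norm{v_F}_{K'}\lesssim h_F^{1/2}\norm{j_{F,\kappa,i}}_F$, $\norm{\nabla v_F}_{K'}\lesssim h_F^{-1/2}\norm{j_{F,\kappa,i}}_F$, cancelling a power of $\norm{j_{F,\kappa,i}}_F$, and then invoking the volume-term bound on $h_{K'}\norm{r_{K',i}}_{K'}$ (note $h_F\eqsim h_{K'}$ for $K'\in\TF$) and the preliminary step for $\norm{\Resi(u_\T,m_\T)}_{H^{-1}(\omega_F)}$, I bound $h_F^{1/2}\norm{\ji}_F$ by the relevant error norms over $\omega_F\subset\omK$ plus oscillation contributions $h_{K'}\norm{(\mathrm{I}-\Pi_{K',\kappa})r_{K',i}}_{K'}$ and $h_F^{1/2}\norm{(\mathrm{I}-\Pi_{F,\kappa})\ji}_F$. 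Summing the volume bound for $K$ and the jump bounds for $F\in\calFKi$ in the definition \eqref{eq:element_estimators} of $\etaKi$, and collecting the oscillation terms into $\oscKqi$ exactly as in \eqref{eq:osc}, yields \eqref{eq:eta-lower-bounds}; the error norms live on $\omK=\bigcup_{K'\in\tTK}K'$ because only $K$, its face-neighbours, and the patches $\omega_F$ are involved.

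\textbf{Main obstacle.} The bubble machinery of the last two steps is entirely routine; the delicate point is the preliminary localisation step together with the oscillation bookkeeping. Unlike the linear case, the residuals carry the nonlocal coupling $F[m_\T]$ and the $x$- and $p$-dependent term $m_\T\frac{\partial H}{\partial p}[\nabla u_\T]$, so one must check these produce exactly the local dual norms $\norm{F[m]-F[m_\T]}_{H^{-1}(\omK)}$ and $\norm{m-m_\T}_{H^1(\omK)}$ and no global quantities, and that the genuinely non-polynomial terms $H[\nabla u_\T]$, $\Div(m_\T\frac{\partial H}{\partial p}[\nabla u_\T])$ and $\jump{\frac{\partial H}{\partial p}[\nabla u_\T]\cdot n_F}$ — none of which is piecewise polynomial since $H$ is only $C^{1,1}$ — end up inside $\oscKqi$ rather than being differenced against polynomials of their own.
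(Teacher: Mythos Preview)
Your proposal is correct and follows essentially the same bubble-function route as the paper, which packages your volume and jump steps into an abstract lemma (Lemma~\ref{lem:bubble_functions}) and then expands $\langle \Resi(u_\T,m_\T),w\rangle$ directly via the exact solution identities rather than first stating a separate localised $H^{-1}$ bound. One small correction: in your preliminary step there is no zeroth-order term $\norm{u-u_\T}_\omega$ to absorb (and Poincar\'e on $\omega$ would not apply to $u-u_\T\notin H^1_0(\omega)$ anyway); the term $(H[\nabla u]-H[\nabla u_\T])\psi$ already produces only $\norm{\nabla(u-u_\T)}_\omega$ after applying Poincar\'e to the \emph{test function} $\psi\in H^1_0(\omega)$, exactly as the paper does on $K$ and on $\omega_F$.
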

We postpone the proof of Theorem~\ref{theorem:eta-lower-bound} to Section~\ref
{sec:proof_of_theorem_ref_theorem_eta_lower_bound} below.
The next result expresses the global efficiency for the total estimator, which comprises both the residual and stabilization estimators. 
This shows that, up to data oscillation and constants, the total estimator is a lower bound for the total error.

\begin{corollary}[Global efficiency]\label{corollary:eta-lower-bound}
Assume~\ref{H:stabilization_main}, that $G\in L^2(\Omega)$, and that $F\colon \mathcal{H}\rightarrow L^2(\Omega)$.
Suppose that $(u_\mathcal{T},m_\mathcal{T}) \in V(\mathcal{T})\times V(\T)$ is a solution of~\eqref{eq:num_scheme}.
Then,
\begin{equation}
      \eta(u_\T,m_\T)  \lesssim \norm{m -m_\mathcal{T}}_{H^1(\Omega)} + \norm{u -u_\T}_{H^1(\Omega)}   + \left[\sum_{i=1}^2 \sum_{K\in\T} [\oscKqi]^2 \right]^{\frac{1}{2}}, 
\end{equation}
where the hidden constant depends on the shape-regularity of $\mathcal{T}$, the constants $M_\infty$, $L_F$, $L_H$, and $L_{H_p}$, the diffusion parameter $\nu$, and the Poincar\'{e} constant of $\Omega$.
\end{corollary}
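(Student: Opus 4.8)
The plan is to assemble the global efficiency bound for $\eta(u_\T,m_\T)$ from two ingredients that are, between them, essentially already in hand: local efficiency of the residual estimators (Theorem~\ref{theorem:eta-lower-bound}) and global efficiency of the stabilization estimators (the bound~\eqref{eq:stabilization_efficiency}). Recalling the definition~\eqref{eq:total-estimator}, namely $\eta(u_\T,m_\T)=\sum_{i=1}^2[\etaResi+\etastabi]$, it suffices to bound each of the four terms $\etaResi$ and $\etastabi$, $i\in\{1,2\}$, by the right-hand side of the claimed inequality.

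First I would treat the residual estimators. By the definition~\eqref{eq:residual-estimator}, $\etaResi^2=\sum_{K\in\T}[\etaKi]^2$, so squaring the local bounds~\eqref{eq:eta-lower-bounds} from Theorem~\ref{theorem:eta-lower-bound} and summing over $K\in\T$ gives, using the fact that the patches $\omK$ have finite overlap controlled by the shape-regularity parameter $\theta_\T$,
\begin{align*}
[\etaResH]^2 &\lesssim \sum_{K\in\T}\left(\norm{F[m]-F[m_\T]}_{H^{-1}(\omK)}^2+\norm{\nabla(u-u_\T)}_{\omK}^2\right) + \sum_{K\in\T}[\oscKqH]^2,
\\
[\etaResF]^2 &\lesssim \sum_{K\in\T}\left(\norm{m-m_\T}_{H^1(\omK)}^2+\norm{\nabla(u-u_\T)}_{\omK}^2\right) + \sum_{K\in\T}[\oscKqF]^2.
\end{align*}
The finite overlap lets me replace $\sum_{K\in\T}\norm{\cdot}_{\omK}^2$ by a constant times the corresponding global norm over $\Omega$; here one also uses that the $H^{-1}(\omK)$-seminorms sum up to at most a constant times $\norm{F[m]-F[m_\T]}_{H^{-1}(\Omega)}$, which is then bounded by $L_F\norm{m-m_\T}_{\mathcal{H}}\lesssim L_F\norm{m-m_\T}_{H^1(\Omega)}$ by the Lipschitz continuity~\eqref{F2} of $F$ and the continuous embedding $H^1_0(\Omega)\hookrightarrow\mathcal{H}$. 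This yields $\etaResi\lesssim\norm{m-m_\T}_{H^1(\Omega)}+\norm{u-u_\T}_{H^1(\Omega)}+\big(\sum_{K\in\T}[\oscKqi]^2\big)^{1/2}$ for each $i$.

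For the stabilization estimators, I would invoke~\eqref{eq:stabilization_efficiency} directly: since $(u_\T,m_\T)$ solves~\eqref{eq:num_scheme}, one has $S_i(u_\T,m_\T;v_\T)=\langle\Resi(u_\T,m_\T),v_\T\rangle_{H^{-1}\times H^1_0}$ for all $v_\T\in V(\T)$, hence $\etastabi\le\norm{\Resi(u_\T,m_\T)}_{H^{-1}(\Omega)}$, and then Lemma~\ref{lemma-E-general-est} gives $\norm{\Resi(u_\T,m_\T)}_{H^{-1}(\Omega)}\lesssim\norm{m-m_\T}_{H^1(\Omega)}+\norm{u-u_\T}_{H^1(\Omega)}$, again using $H^1_0(\Omega)\hookrightarrow\mathcal{H}$ to pass from the $\mathcal{H}$-norm to the $H^1$-norm. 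Summing the four contributions and absorbing all mesh-independent constants into the $\lesssim$ notation gives the stated bound, with the hidden constant depending only on $\theta_\T$, $M_\infty$, $L_F$, $L_H$, $L_{H_p}$, $\nu$, $\kappa$, and the Poincar\'e constant of $\Omega$ (the last entering through the embedding and the equivalence of $\norm{\nabla\cdot}_\Omega$ with the full $H^1$-norm on $H^1_0(\Omega)$). There is no genuine obstacle here: the work has already been done in Theorem~\ref{theorem:eta-lower-bound} and Lemma~\ref{lemma-E-general-est}, and the only point requiring a little care is the finite-overlap/patch-summation argument that converts the local residual bounds into global ones without losing mesh-independence of the constant.
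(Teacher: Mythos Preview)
Your proposal is correct and follows essentially the same route as the paper: sum the local efficiency bounds of Theorem~\ref{theorem:eta-lower-bound} over all elements, use finite overlap of the patches $\omK$ (each element lies in at most $d+2$ such patches) to control the sum $\sum_{K\in\T}\norm{F[m]-F[m_\T]}_{H^{-1}(\omK)}^2$ by $(d+2)\norm{F[m]-F[m_\T]}_{H^{-1}(\Omega)}^2$, then apply~\eqref{F2} and the embedding $H^1_0(\Omega)\hookrightarrow\mathcal{H}$, and finally invoke~\eqref{eq:stabilization_efficiency} for the stabilization estimators. The paper likewise singles out the $H^{-1}(\omK)$ patch-summation as the only nonobvious step.
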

\begin{proof}
The result is essentially immediate after summing up the bounds~\eqref{eq:eta-lower-bounds} over all elements of the mesh and combining with the efficiency of the stabilization estimators in~\eqref{eq:stabilization_efficiency}.
The only nonobvious step that is required is to use the inequality 
$$\sum_{K\in\T}\norm{F[m]-F[m_\T]}_{H^{-1}(\omK)}^2\leq(d+2)\norm{F[m]-F[m_\T]}_{H^{-1}(\Omega)}^2,$$
which is deduced from the fact that each element of $\T$ is contained in at most $d+2$ of the patches $\{\omK,\; K\in\T\}$. We then use the Lipschitz continuity of $F$ in~\eqref{F2}, and the hypothesis that $H^1_0(\Omega)$ is continuously embedded into $\mathcal{H}$.
\end{proof}

\begin{remark}
In practical computations, the constant $\widetilde{\mathcal{R}}_*$ appearing in Theorem~\ref{thm:error_upper_bound} may be hard to compute \emph{a priori}, since it depends on the problem data. 
Nevertheless, Theorem~\ref{thm:error_upper_bound} still guarantees that convergence to zero of the estimators implies convergence of the numerical approximations to the exact solution.
Note also that there is no smallness condition for the efficiency bounds, c.f.\ Corollary~\ref{corollary:eta-lower-bound}.
\end{remark}

\begin{remark}[Data oscillation]
We stress that the polynomial degree $\kappa$ in Theorem~\ref{theorem:eta-lower-bound} is arbitrary, although the constants in the bounds do depend on $\kappa$. Therefore, in many cases the oscillation terms $\oscKqi$ are higher order than the errors, and might even vanish entirely in some cases. For instance, if the Hamiltonian $H$ has no spatial dependence, i.e.\ $H(x,p)=H(p)$ for all $x\in \overline{\Omega}$, then $H[\nabla u_\T]$ and $\frac{\partial H}{\partial p}[\nabla u_\T]$ are piecewise constant over the mesh $\T$, and thus these are reproduced exactly by the local projection operators for any $\kappa\geq 0$.
Regarding the possibly nonlocal operator $F$, we can consider the following situation for the sake of illustration: suppose that we are given a shape-regular sequence of conforming simplicial meshes $\{\T_k\}_{k\in\N}$ such that the numerical scheme is stable with some uniform \emph{a priori} bound on $\norm{m_{\T_k}}_{H^1(\Omega)}$ (see for instance \cite[Eq.~(5.6a)]{osborne2024analysis}), and suppose also that $F$ is continuous from its domain $\mathcal{H}$ into $L^2(\Omega)$.
Then the compactness of the embedding of $H^1_0(\Omega)$ into $\mathcal{H}$ implies the precompactness in $L^2(\Omega)$ of the set $\{F[m_{\T_k}]\}_{k\in\N}$.
It is then easy to show that $\left(\sum_{K\in\T}h_K^2\norm{F[m_\T]-\Pi_{K,\kappa}F[m_\T]}_K^2\right)^{\frac{1}{2}}= o ( h_k )$ where $h_k\coloneqq \max_{K\in\T_k}h_K$ denotes the maximum mesh-size of $\T_k$. 
In other words, the contribution to data oscillation from the approximation of $F[m_\T]$ is then higher-order than the error in many cases.
Despite these examples, it is known however that the classical $L^2$-data oscillation term as considered does have some significant shortcomings for problems with rough data, see \cite{KreuzerVeeser2021} for a detailed discussion in the case of the Poisson equation.
\end{remark}

\subsection{Proof of Theorem~\ref{theorem:eta-lower-bound}}\label{sec:proof_of_theorem_ref_theorem_eta_lower_bound}
To prove the lower bounds, we start by recalling some well-known bounds for the terms in the residual bounds in Lemma~\ref{lem:bubble_functions} below.
\begin{lemma}\label{lem:bubble_functions}
For each $K\in\T$ and any $\kappa\geq 0$, and any $r\in L^2(K)$, we have
\begin{equation}\label{eq:bubble_function_bound_1}
h_K\norm{r}_K\lesssim \left(\sup_{w\in H^1_0(K)\setminus\{0\}}\frac{\int_K r w\dx }{\norm{\nabla w}_K} \right)+h_K\norm{r-\Pi_{K,\kappa} r}_K.
\end{equation}
For each interior face $F\in \mathcal{F}^I$, any $\kappa\geq 0$, and any $j\in L^2(F)$, we have
\begin{equation}\label{eq:bubble_function_bound_2}
h_F^{\frac{1}{2}}\norm{j}_F \lesssim \left(\sup_{w\in H^1_0(\omega_F)\setminus\{0\}}\frac{\int_F j w \ds}{\norm{\nabla w}_{\omega_F}}\right)+h_F^{\frac{1}{2}}\norm{j-\Pi_{F,\kappa}j}_F.
\end{equation}
The hidden constants in~\eqref{eq:bubble_function_bound_1} and~\eqref{eq:bubble_function_bound_2} depend only on the polynomial degree~$\kappa$, the dimension $\dim$, and the shape-regularity of $\T$.
\end{lemma}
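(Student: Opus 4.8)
The plan is to use the classical bubble-function technique for \emph{a posteriori} lower bounds, see e.g.~\cite[Section~3.6]{verfurth2013posteriori}; the only adaptation needed here is to peel off the oscillation term that arises from replacing $r$ (resp.\ $j$) by its polynomial projection before testing against bubble functions. I begin with~\eqref{eq:bubble_function_bound_1}. Write $r_\kappa\coloneqq\Pi_{K,\kappa}r\in\mathcal{P}_\kappa(K)$ and let $b_K\in H^1_0(K)$ be the standard interior bubble function on $K$, normalized so that $0\leq b_K\leq 1$. A transformation to a reference simplex supplies the two scaling estimates I will use, both with constants depending only on $\kappa$, $\dim$ and the shape-regularity of $\T$: the polynomial norm equivalence $\norm{q}_K^2\lesssim\int_K b_K q^2\dx$ for all $q\in\mathcal{P}_\kappa(K)$, and the inverse inequality $\norm{\nabla(b_K q)}_K\lesssim h_K^{-1}\norm{q}_K$ for all $q\in\mathcal{P}_\kappa(K)$.

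Testing with $w\coloneqq b_K r_\kappa\in H^1_0(K)$, and using $0\leq b_K\leq 1$ together with $\norm{w}_K\leq\norm{r_\kappa}_K$, I would obtain $\norm{r_\kappa}_K^2\lesssim\int_K b_K r_\kappa^2\dx=\int_K r w\dx+\int_K(r_\kappa-r)w\dx$, where the first term is bounded by the supremum in~\eqref{eq:bubble_function_bound_1} times $\norm{\nabla w}_K\lesssim h_K^{-1}\norm{r_\kappa}_K$, and the second by $\norm{r-r_\kappa}_K\norm{r_\kappa}_K$. Dividing through by $\norm{r_\kappa}_K$ and multiplying by $h_K$ then gives $h_K\norm{r_\kappa}_K\lesssim\sup(\cdots)+h_K\norm{r-r_\kappa}_K$, and~\eqref{eq:bubble_function_bound_1} follows from the triangle inequality $\norm{r}_K\leq\norm{r_\kappa}_K+\norm{r-r_\kappa}_K$, noting that $h_K\norm{r-r_\kappa}_K$ is exactly the oscillation contribution on the right-hand side of~\eqref{eq:bubble_function_bound_1}.

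For~\eqref{eq:bubble_function_bound_2} I would run the same argument on the face patch $\omega_F$. Set $j_\kappa\coloneqq\Pi_{F,\kappa}j$, let $b_F\in H^1_0(\omega_F)$ be the standard face bubble function associated with $F$ (again normalized so that $0\leq b_F\leq 1$), and let $E_F$ denote the constant-in-the-normal-direction prolongation of a polynomial on $F$ to a polynomial on $\omega_F$, which satisfies $(E_F q)|_F=q$ together with the scaling bounds $\norm{E_F q}_{\omega_F}\lesssim h_F^{1/2}\norm{q}_F$ and $\norm{\nabla(b_F E_F q)}_{\omega_F}\lesssim h_F^{-1/2}\norm{q}_F$, and the face norm equivalence $\norm{q}_F^2\lesssim\int_F b_F q^2\ds$. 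Testing with $w\coloneqq b_F E_F j_\kappa\in H^1_0(\omega_F)$ and repeating the chain of inequalities above over $F$ (using $\norm{w}_F\leq\norm{j_\kappa}_F$, since $b_F\leq 1$ and $(E_F j_\kappa)|_F=j_\kappa$) gives $h_F^{1/2}\norm{j_\kappa}_F\lesssim\sup(\cdots)+h_F^{1/2}\norm{j-j_\kappa}_F$, whence~\eqref{eq:bubble_function_bound_2} by the triangle inequality.

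I do not expect a genuine obstacle here: the argument is entirely standard. The only points requiring some care are to make the bubble-function norm equivalences, the inverse inequalities, and the face-to-patch prolongation explicit and uniform in the mesh-size through the reference-element transformation, and to keep track of the dependence of all hidden constants on the fixed but arbitrary polynomial degree $\kappa$; none of this presents any real difficulty.
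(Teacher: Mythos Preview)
Your proof is correct and follows essentially the same bubble-function approach as the paper's own proof; the paper simply compresses your explicit construction of $w=b_K r_\kappa$ (resp.\ $w=b_F E_F j_\kappa$) into a citation of the inverse inequality $h_K\norm{\Pi_{K,\kappa}r}_K\lesssim\sup_{w\in H^1_0(K)\setminus\{0\}}\int_K\Pi_{K,\kappa}r\,w\,\dx/\norm{\nabla w}_K$ from~\cite[Theorem~3.59]{verfurth2013posteriori}, and then applies the same triangle-inequality bookkeeping you carry out. The only cosmetic difference is that the paper also mentions a Poincar\'e step to pass from $\int_K\Pi_{K,\kappa}r\,w$ back to $\int_K r\,w$ in the supremum, whereas you achieve the same effect via the decomposition $\int_K r_\kappa w=\int_K r w+\int_K(r_\kappa-r)w$ and the bound $\norm{w}_K\leq\norm{r_\kappa}_K$.
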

\begin{proof}
This result is derived from the well-known bubble function argument, see e.g.~\cite[Section~3.8.2]{verfurth2013posteriori}, so we only outline here the main steps.
The bound~\eqref{eq:bubble_function_bound_1} is shown by first applying the triangle inequality $h_K\norm{r}_K\leq h_K\norm{\Pi_{K,\kappa}r}_K+h_K\norm{r-\Pi_{K,\kappa}r}_K$, followed by applying the inverse inequality $h_K\norm{\Pi_{K,\kappa}r}_{K}\lesssim \sup_{w\in H^1_0(K)\setminus\{0\}}\frac{\int_K \Pi_{K,\kappa}r w\dx }{\norm{\nabla w}_K}$, see e.g.~\cite[Theorem~3.59]{verfurth2013posteriori}, followed by a further triangle inequality and the Poincar\'e inequality for functions in $H^1_0(K)$.
The bound~\eqref{eq:bubble_function_bound_2} follows by an analoguous argument.
\end{proof}

\paragraph{Proof of Theorem~\ref{theorem:eta-lower-bound}}
We start by considering an arbitrary $K\in\T$ and arbitrary $w\in H^1_0(K)$. Then, integration-by-parts implies that
\begin{equation}
\int_K \rKi w\dx = \langle \Resi(u_\T,m_\T), w\rangle_{H^{-1}\times H^1_0}  \quad \forall i\in\{1,2\}.
\end{equation}
Therefore, we obtain
\begin{subequations}
\begin{align}
 \int_K \rKH w \dx &=\langle F[m_\T]-F[m], w\rangle_{H^{-1}\times H^1_0} + \int_\Omega \nu \nabla(u-u_\T)\cdot\nabla w + (H[\nabla u]-H[\nabla u_\T])w \dx,
 \\
 \int_K \rKF w \dx &=\int_K \left[ \nu \nabla(m-m_\T)\cdot\nabla w + \left(m\frac{\partial H}{\partial p}[\nabla u]-m_\T\frac{\partial H}{\partial p}[\nabla u_\T]\right)\cdot \nabla w \right]\dx. \label{eq:lower_bound_r2_elements_1}
\end{align}
\end{subequations}
Therefore, applying~\eqref{eq:bubble_function_bound_1}, we get
\begin{equation}\label{eq:lower_bound_r1_element}
h_K\norm{\rKH}_K \lesssim \norm{F[m]-F[m_\T]}_{H^{-1}(K)}+\norm{\nabla(u-u_\T)}_K+h_K\norm{\rKH-\Pi_{K,\kappa}\rKH}_K.
\end{equation}
The triangle inequality followed by the bounds~\eqref{h-deriv-linf-bound} and~\eqref{eq:Hp_bound} imply that
\begin{equation}\label{eq:lower_bound_r2_elements_2}
\begin{split}
\norm{m\frac{\partial H}{\partial p}[\nabla u]-m_\T\frac{\partial H}{\partial p}[\nabla u_\T]}_K &\leq \norm{m \left(\frac{\partial H}{\partial p}[\nabla u] - \frac{\partial H}{\partial p}[\nabla u_\T]\right) }_K+ \norm{(m-m_\T)\frac{\partial H}{\partial p}[\nabla u_\T]}_K
\\
& \leq \norm{m}_{L^{\infty}({\Omega})}L_{H_p}\norm{\nabla(u-u_\T)}_K + L_H\norm{m-m_\T}_K.
\end{split}
\end{equation}
Recall that \eqref{m-ess-bound} provides a maximum norm bound for the density $m$.
Therefore, combining \eqref{eq:bubble_function_bound_1} with \eqref{eq:lower_bound_r2_elements_1} and~\eqref{eq:lower_bound_r2_elements_2}, we find that
\begin{equation}\label{eq:lower_bound_r2_elements}
h_K \norm{\rKF}_K\lesssim \norm{m-m_\T}_{H^1(K)}+\norm{\nabla(u-u_\T)}_K+h_K\norm{\rKF-\Pi_{K,\kappa}\rKF }_K,
\end{equation}
where the hidden constant depends on the constants $M_\infty$ from~\eqref{m-ess-bound}, $L_{H_p}$ and $L_H$ as well as $\nu$, $\dim$, $\kappa$, and the shape-regularity of $\T$.
Next, for each interior face $F\in \calFi$, and any $w\in H^1_0(\omega_F)$, it is straightforward to check that integration-by-parts gives
\begin{equation}
\begin{aligned}
\int_F \ji w\ds = \sum_{K\in\T_F} \int_K \rKi w \dx - \langle \Resi(u_\T,m_\T),w \rangle_{H^{-1}\times H^1_0} &&& \forall i\in\{1,2\},
\end{aligned}
\end{equation}
where it is recalled that $\TF$ denotes the set of elements containing $F$.
Therefore, using~\eqref{eq:bubble_function_bound_2} (where we note that $\jH=\nu \jump{\nabla u_\T\cdot n_F}$ is a constant function on $F$), the Poincar\'e inequality on the patch $\omega_F$,  and the bound~\eqref{eq:lower_bound_r1_element}, we find that, for each $F\in\calFi$,
\begin{equation}
h_F^{\frac{1}{2}} \norm{\jH}_F \lesssim \norm{F[m]-F[m_\T]}_{H^{-1}(\omega_F)}+\norm{\nabla(u-u_\T)}_{\omega_F}+\sum_{K\in \T_F}h_K\norm{\rKH-\Pi_{K,\kappa}\rKH}_K.
\label{eq:lower_bound_j1_faces}
\end{equation}
Similarly,
\begin{multline}
h_F^{\frac{1}{2}}\norm{\jF}_F \lesssim \norm{\nabla(m-m_\T)}_{H^1(\omega_F)}+\norm{\nabla(u-u_\T)}_{\omega_F}
\\ + h_F^{\frac{1}{2}}\norm{\jF-\Pi_{F,\kappa}\jF}_F +\sum_{K\in\T_F}h_K\norm{\rKF-\Pi_{K,\kappa}\rKF}_K.\label{eq:lower_bound_j2_faces_1}
\end{multline}
Note that if $K\in \T$ and $F\in \calFKi$, then $\omega_F\subset \omK$, and thus after summing~\eqref{eq:lower_bound_j1_faces} over all faces of $K$, we get, for each $K\in\T$,
\begin{equation}\label{eq:lower_bound_j1_faces_2}
\sum_{F\in\calFKi}h_F^{\frac{1}{2}} \norm{\jH}_F \lesssim \norm{F[m]-F[m_\T]}_{H^{-1}(\omK)}+\norm{\nabla(u-u_\T)}_{\omK}+\oscKqH.
\end{equation}
Similarly, using~\eqref{eq:lower_bound_r2_elements} and~\eqref{eq:lower_bound_j2_faces_1}, we find that, for each $K\in\T$,
\begin{equation}\label{eq:lower_bound_j2_faces}
\sum_{F\in\calFKi}h_F^{\frac{1}{2}}\norm{\jF}_F \lesssim \norm{m-m_\T}_{H^1(\omK)}+\norm{\nabla(u-u_\T)}_{\omK}+\oscKqF.
\end{equation}
Thus we obtain~\eqref{eq:eta-lower-bound_1} by combining~\eqref{eq:lower_bound_r1_element} with \eqref{eq:lower_bound_j1_faces_2}. 
Similarly, we obtain~\eqref{eq:eta-lower-bound_2} by combining~\eqref{eq:lower_bound_r2_elements} with \eqref{eq:lower_bound_j2_faces}.
\hfill\qedsymbol

\section{A posteriori error bounds for affine-preserving stabilizations}\label{sec:affine_preserving_stab}
In this section, we show that the analysis can be refined if, in addition to the hypothesis~\ref{H:stabilization_main}, the stabilizations satisfies some further structural assumptions, namely Lipschitz continuity, maximum-norm stability of the density approximation, and patchwise affine-preservation.
We present each of these assumptions in turn.

\subsection{Lipschitz continuity}
We assume the following local Lipschitz continuity property on the stabilizations.
\begin{enumerate}[label={(H\arabic*)},resume]
\item \emph{Lipschitz continuity.} For each $i\in\{1,2\}$, for each interior vertex $z\in \calVi$, 
\begin{equation}
\abs{
 S_i(v_{\T},w_{\T};\psi_{z})-S_i(\widetilde{v}_{\T},\widetilde{w}_\T; \psi_{z} ) } \lesssim \abs{\omz}_d^{\frac{1}{2}}\left(\norm{\nabla(v_{\T}-\widetilde{v}_{\T})}_{\omega_{z}}+\norm{ \nabla(w_{\T}-\widetilde{w}_{\T})}_{\omega_{z}}\right),
\end{equation}
for all $(v_{\T},w_{\T},\widetilde{v}_{\T},\widetilde{w}_{\T})\in \left[V(\T)\right]^4$, where the hidden constant is independent of the mesh-size and of the vertex $z\in \mathcal{V}$.
\label{H:stabilization_local_Lipschitz}
\end{enumerate}
Recall that $|\omz|_\dim$ denotes the Lebesgue measure of $\omz$.
The assumption~\ref{H:stabilization_local_Lipschitz} contains several ingredients. 
Observe that in addition to Lipschitz continuity with respect to the first two arguments, it essentially requires that the stabilizations should be acting locally, i.e.\ when testing with a basis function $\psi_{z}$ in the third argument, the stabilizations may only depend on the values of the first two arguments over the corresponding vertex patch $\omega_{z}$. 
It also further requires, in essence, that the stabilization terms should scale as the first-order (or lower) terms of the differential operator.

\begin{example}
Consider the stabilizations $S_1$ and $S_2$ defined by~\eqref{eq:linear_stabilization}, and suppose that the matrix-valued function $D_\T\colon \Omega \rightarrow \Rddsymp$ appearing in~\eqref{eq:linear_stabilization} satisfies $ \norm{D_\mathcal{T}}_{L^{\infty}\left(\omz;{\Rdd}\right)}\lesssim h_{\omz}$ for each vertex $z\in \calVi$.
For instance, this is satisfied when $D_\T$ is given by~\eqref{edge-tensor-formula} and the weights $\edgeweight \eqsim \diam E$, as is used in~\cite{osborne2023finite,osborne2024near}.
By applying the Cauchy-Schwarz inequality, then applying inverse and H\"{o}lder inequalities, we obtain
\begin{equation}
\begin{split}
\abs{S_1(v_{\T},w_{\T};\psi_{z})-S_1(\widetilde{v}_{\T},\widetilde{w}_\T; \psi_{z} ) } 
&\leq \norm{D_\mathcal{T} \nabla (v_\mathcal{T} - \widetilde{v}_\T)}_{\omz} \norm{\nabla \psi_z}_{\omz} \\
&\lesssim \norm{D_\mathcal{T}}_{L^{\infty}(\omz;\Rdd)} \norm{\nabla (v_\mathcal{T} - \widetilde{v}_\T)}_{\omz} h_{\omz}^{-1} \abs{\omz}_{d}^{\frac{1}{2}} \\
&\lesssim  \norm{\nabla (v_\mathcal{T} - \widetilde{v}_\T)}_{\omz} \abs{\omz}_{d}^{\frac{1}{2}}.
\end{split}
\end{equation}
A similar bound for $S_2(\cdot,\cdot ; \cdot)$ follows from the same arguments. This verifies~\ref{H:stabilization_local_Lipschitz} for this example.
\end{example}

\subsection{Maximum-norm bound}
We also assume the following bound for the maximum-norm of $m_\T$.
\begin{enumerate}[label={(H\arabic*)},resume]
\item \label{H:maxnorm_bound} \emph{Maximum norm bound.} 
We assume that the datum $G$ and the numerical scheme are such that there is a uniform maximum norm bound for the computed density, i.e.\ any solution $(u_\T,m_\T)$ of~\eqref{eq:num_scheme} satisfies
\begin{equation}\label{eq:max-norm}
\norm{m_\T}_{C(\overline{\Omega})} \leq \widetilde{M}_\infty, 
\end{equation}
for some constant $\widetilde{M}_\infty$, which may depend on the problem data and the shape-regularity parameter $\theta_\T$ but not on $m_\T$ or the mesh-size of $\T$.
\end{enumerate}
The maximum norm bound~\eqref{eq:max-norm} is the discrete analogue of~\eqref{m-ess-bound}.
We show in Proposition~\ref{prop:discrete_maxnorm_bound} below how to verify~\ref{H:maxnorm_bound} for problems in two and three space-dimensions, in the case of the stabilizations from~\cite{osborne2023finite,osborne2024near} on meshes satisfying the Xu--Zikatanov condition.
It can also be verified for a wider range of stabilizations satisfying a DMP.

\begin{proposition}\label{prop:discrete_maxnorm_bound}
Let $\dim\in \{2,3\}$.
Suppose that $G\in H^{-1}(\Omega)$ satisfies $G=g_0 - \nabla {\cdot} g_1$ with $g_0\in L^{q/2}(\Omega)$ and $g_1\in L^q(\Omega;\R^\dim)$ for some $q>\dim$.
Let $\T$ satisfy the Xu--Zikatanov condition.
Let $S_2$ be defined by~\eqref{eq:linear_stabilization} and~\eqref{edge-tensor-formula}, with weights $\edgeweight$ verifying the hypotheses of~\cite[Theorem~4.2]{osborne2023finite}. Suppose that $(u_\T,m_\T)\in \left[V(\T)\right]^2$ solves~\eqref{eq:num_scheme}.
Then,
    \begin{equation}\label{eq:maxnorm_bound_example}
        \norm{m_\T}_{C(\overline{\Omega})} \lesssim \norm{G}_{H^{-1}(\Omega)}+\norm{g_0}_{L^{q/2}(\Omega)}+\norm{g_1}_{L^q(\Omega)}.
    \end{equation}
\end{proposition}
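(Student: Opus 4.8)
Since the scheme is linear in $m_\T$ once $u_\T$ is regarded as fixed, equation~\eqref{eq:num_scheme_fp} with $S_2$ given by~\eqref{eq:linear_stabilization}--\eqref{edge-tensor-formula} asks for $m_\T\in V(\T)$ with
\[
a_\T(m_\T,w_\T):=\int_\Omega (\nu\mathbb{I}_\dim+D_\T)\nabla m_\T\cdot\nabla w_\T+m_\T\,b_\T\cdot\nabla w_\T\,\dx=\langle G,w_\T\rangle_{H^{-1}\times H^1_0}\qquad\forall\,w_\T\in V(\T),
\]
where $b_\T:=\frac{\partial H}{\partial p}[\nabla u_\T]$ is piecewise constant on $\T$ with $\|b_\T\|_{L^\infty(\Omega;\R^\dim)}\le L_H$ by~\eqref{h-deriv-linf-bound}, and $D_\T$ is symmetric positive semidefinite. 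Under the Xu--Zikatanov condition and the weight hypotheses of~\cite[Theorem~4.2]{osborne2023finite}, the stiffness matrix of $a_\T$ with the Dirichlet degrees of freedom eliminated is an M-matrix; hence the scheme is monotone, $m_\T\ge0$ in $\Omega$ (consistently with~\ref{H:stabilization_main}), and it remains only to bound $\max_{\overline\Omega}m_\T$ from above by the data norms, uniformly in $\T$.

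The plan is to run a discrete De Giorgi--Stampacchia iteration, mirroring the proof of~\cite[Theorem~8.15]{gilbarg2015elliptic} that underlies the continuous bound~\eqref{m-ess-bound}. For $k\ge0$, let $w_{\T,k}\in V(\T)$ be the Lagrange nodal interpolant of the truncation $(m_\T-k)^+$; it is nonnegative, vanishes at every vertex where $m_\T\le k$ and on $\partial\Omega$, and satisfies $w_{\T,k}\ge(m_\T-k)^+$ pointwise. The M-matrix structure yields the algebraic Stampacchia-type inequality $a_\T(w_{\T,k},m_\T)\ge a_\T(w_{\T,k},w_{\T,k})$, so testing the scheme with $w_{\T,k}$ and discarding the nonnegative stabilization contribution gives
\[
\nu\norm{\nabla w_{\T,k}}_\Omega^2\le \langle G,w_{\T,k}\rangle_{H^{-1}\times H^1_0}-\int_\Omega w_{\T,k}\,b_\T\cdot\nabla w_{\T,k}\,\dx.
\]
Writing $A(k):=\{m_\T>k\}$, one estimates $\norm{w_{\T,k}}_{L^2(\Omega)}$ by the Sobolev inequality on the (shape-regularly controlled) support of $w_{\T,k}$, so that the drift term is $\lesssim L_H\,|A(k)|^{1/\dim}\norm{\nabla w_{\T,k}}_\Omega^2$ and is absorbed into the left-hand side for $k\ge k_0$, where $k_0\lesssim\norm{m_\T}_{L^2(\Omega)}$ by Chebyshev's inequality; here the uniform \emph{a priori} stability bound $\norm{m_\T}_{L^2(\Omega)}\lesssim\norm{m_\T}_{H^1(\Omega)}\lesssim\norm{G}_{H^{-1}(\Omega)}$ for the stabilized scheme is essential, cf.~\cite[Eq.~(5.6a)]{osborne2024analysis} and~\cite{osborne2024near}. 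Splitting $G=g_0-\nabla\cdot g_1$ and applying H\"older and Sobolev inequalities on the support of $w_{\T,k}$ then gives $\norm{\nabla w_{\T,k}}_\Omega\lesssim(\norm{g_0}_{L^{q/2}(\Omega)}+\norm{g_1}_{L^q(\Omega)})\,|A(k)|^{\beta}$ for some $\beta=\beta(\dim,q)>0$, where the positivity of $\beta$ is exactly the point at which $q>\dim$ is used. Comparing with the super-level set $A(h)$ for $h>k$ (again via $w_{\T,k}\ge(m_\T-k)^+\ge h-k$ on $A(h)$ and Sobolev) produces a recursion to which Stampacchia's iteration lemma applies, forcing $m_\T\le k_0+\Lambda$ a.e.\ with $\Lambda\lesssim\norm{g_0}_{L^{q/2}(\Omega)}+\norm{g_1}_{L^q(\Omega)}$; combined with $m_\T\ge0$ and $k_0\lesssim\norm{m_\T}_{L^2(\Omega)}\lesssim\norm{G}_{H^{-1}(\Omega)}\lesssim\norm{g_0}_{L^{q/2}(\Omega)}+\norm{g_1}_{L^q(\Omega)}$, this yields~\eqref{eq:maxnorm_bound_example}.

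The main obstacle, relative to the textbook estimate, is the coexistence of the stabilization term with the loss of coercivity when $L_H$ is large. The stabilization is dealt with by keeping $D_\T$ positive semidefinite in the energy lower bound together with the M-matrix property of~\cite[Theorem~4.2]{osborne2023finite}, which is what permits the algebraic Stampacchia inequality once the non-conforming truncation $(m_\T-k)^+$ is replaced by its piecewise-affine nodal interpolant; carrying this replacement through — i.e.\ relating $w_{\T,k}$, $(m_\T-k)^+$, and the super-level sets $A(k)$, $A(h)$ by shape-regular interpolation and inverse estimates with mesh-independent constants — is the delicate bookkeeping. The non-coercivity is then absorbed via the level-set smallness argument, which is precisely why a uniform-in-mesh \emph{a priori} $L^2$-bound on $m_\T$ must be invoked. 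An alternative route, closer to the continuous proof, would use the comparison principle of~\cite[Theorem~4.2]{osborne2023finite} against the continuous barrier $\Phi$ solving $-\nu\Delta\Phi-\nabla\cdot(b_\T\Phi)=G$ in $\Omega$ with $\Phi=0$ on $\partial\Omega$, whose $C(\overline\Omega)$-norm is controlled by $\norm{G}_{H^{-1}(\Omega)}+\norm{g_0}_{L^{q/2}(\Omega)}+\norm{g_1}_{L^q(\Omega)}$ uniformly in $u_\T$ and $\T$ via~\cite[Theorem~8.15]{gilbarg2015elliptic}; however, turning $\Phi$ into a genuine discrete supersolution is awkward since $\Phi$ enjoys only De Giorgi--Nash H\"older regularity, so the direct iteration above is preferable.
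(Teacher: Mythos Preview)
Your route---a discrete De~Giorgi/Stampacchia iteration with the drift kept on the left---is different from the paper's, and it contains a step that is not justified. The paper instead moves the drift to the right-hand side: it rewrites~\eqref{eq:num_scheme_fp} as
\[
a(m_\T,w_\T)\coloneqq\int_\Omega(\nu\mathbb{I}_\dim+D_\T)\nabla m_\T\cdot\nabla w_\T\,\dx=(f_0,w_\T)_\Omega+(f_1,\nabla w_\T)_\Omega,
\]
with $f_0=g_0$ and $f_1=g_1-m_\T\,b_\T$. The symmetric form $a$ is coercive and of nonnegative type in the sense of Ciarlet--Raviart~\cite{ciarlet1973maximum} (Xu--Zikatanov plus the structure of $D_\T$). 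The uniform bound $\norm{m_\T}_{H^1(\Omega)}\lesssim\norm{G}_{H^{-1}(\Omega)}$, which you also invoke, together with the Sobolev embedding $H^1_0(\Omega)\hookrightarrow L^{2\dim/(\dim-2)}(\Omega)$ gives $f_1\in L^{q_*}(\Omega;\R^\dim)$ with $q_*=\min\bigl(q,\tfrac{2\dim}{\dim-2}\bigr)>\dim$ for $\dim\in\{2,3\}$; this is where the dimensional restriction enters. The bound then follows from~\cite[Theorem~1]{ciarlet1973maximum}, with a small sharpening showing $f_0\in L^{q_*/2}$ (rather than $L^{q_*}$) suffices. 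This is considerably shorter than your proposal and makes the role of $\dim\le 3$ transparent.

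The gap in your argument is the ``algebraic Stampacchia-type inequality'' $a_\T(m_\T,w_{\T,k})\ge a_\T(w_{\T,k},w_{\T,k})$ for the \emph{nonsymmetric} form. Setting $v=m_\T-w_{\T,k}$ (nodal values $\min(m_\T(z),k)$), this asks for $\sum_{i\in I}w_i(Av)_i\ge 0$ with $I=\{z:m_\T(z)>k\}$. For the symmetric diffusion-plus-stabilization block this follows from nonpositive off-diagonals together with nonnegative row sums, but the drift block $B_{ij}=\int_\Omega\psi_j\,b_\T\cdot\nabla\psi_i\,\dx$ has no row-sum sign in general, and the M-matrix property from~\cite[Theorem~4.2]{osborne2023finite} (off-diagonal nonpositivity plus inverse-positivity) does not supply one. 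Your subsequent absorption of the drift via level-set smallness presupposes the inequality already holds, so it cannot rescue this step; and the comparison of $\operatorname{supp}w_{\T,k}$ with $A(k)$ on mixed-sign elements is also glossed over. The paper's trick of shifting $m_\T b_\T$ into $f_1$ eliminates both difficulties at once: the Stampacchia step then concerns only the symmetric form, and the drift is controlled purely by integrability via Sobolev embedding.
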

Proposition~\ref{prop:discrete_maxnorm_bound} therefore verifies~\ref{H:maxnorm_bound} for the stabilizations of~\cite{osborne2023finite,osborne2024near}, for some constant $\widetilde{M}_\infty \lesssim \norm{G}_{H^{-1}(\Omega)}+\norm{g_0}_{L^{q/2}(\Omega)}+\norm{g_1}_{L^q(\Omega;\R^\dim)}$.

\begin{proof}
The proof is essentially an application of \cite[Theorem~1]{ciarlet1973maximum}, yet we make a small adjustment of their proof in order to reduce the integrability condition on $g_0$.
Note that in this setting, it follows from~\cite[Lemma~3.1]{osborne2024near} that
\begin{equation}\label{eq:uniform_m_H1_bound}
\norm{m_\T}_{H^1(\Omega)}\lesssim \norm{G}_{H^{-1}(\Omega)},
\end{equation}
see~\cite[Remark~3.2]{osborne2024near} for further details.
Then, we can write~\eqref{eq:num_scheme_fp} equivalently as
\begin{equation}
a(m_\T,w_T) = (f_0, w_\T)_\Omega + (f_1, \nabla w_\T)_\Omega \quad \forall w_\T \in V(\T),
\end{equation}
where the bilinear form $a(m_\T,w_\T) \coloneqq \int_{\Omega} (\nu \mathbb{I}_d + D_\T) \nabla m_\T \cdot \nabla w_\T\ \dx$, and with right-hand side terms $f_0 \coloneqq g_0 \in L^{q/2}(\Omega)$ and $f_1 \coloneqq g_1 - m_\T \frac{\partial H}{\partial p}[\nabla u_\T] \in L^{q_*}(\Omega;\R^\dim)$, $q_* \coloneqq \min\left(q, \frac{2d}{d-2}\right)$, where in particular the Sobolev embedding theorem and~\eqref{eq:uniform_m_H1_bound} above imply that
\begin{equation}\label{eq:maxnorm_rhs_data_bounds}
\norm{f_1}_{L^{q_*}(\Omega;\R^\dim)}\leq \norm{g_1}_{L^q(\Omega;\R^\dim)}+L_H \norm{m_\T}_{L^{q_*}(\Omega)}\lesssim \norm{g_1}_{L^{q}(\Omega;\R^\dim)}+\norm{G}_{H^{-1}(\Omega)}.
\end{equation}
Observe that $q\geq q_*>\dim$ for $\dim \in \{2,3\}$.
Since $D_\T$ is symmetric positive semi-definite a.e.\ in $\Omega$, we see that the bilinear form $a(\cdot,\cdot)$ is coercive, i.e. $a(v,v)\geq \nu \norm{\nabla v}_\Omega^2$ for all $v\in H^1_0(\Omega)$.
Note also that the bilinear form $a(\cdot,\cdot)$ is of \emph{nonnegative type} in the sense of~\cite[Eqs.~(2.25), (2.26)]{ciarlet1973maximum} as a result of the Xu--Zikatanov condition{\ifJ, c.f.~\cite{xu1999monotone},\else~\cite{xu1999monotone}\fi} and the definition of $D_\T$ in~\eqref{edge-tensor-formula}, see~\cite[Lemma~A.1]{osborne2023finite} for details.
Therefore, the hypotheses~\cite[Eqs.~(2.1), (2.2), (2.3), (2.25), (2.26)]{ciarlet1973maximum} are all verified.
The proof then essentially follows~\cite{ciarlet1973maximum}, however we can improve the inequality~\cite[Eq.~(3.12)]{ciarlet1973maximum} by using the sharper bound: for any function $v\in H^1_0(\Omega)$ that is supported on a measurable set $E\subset \Omega$, we have
\begin{equation}\label{eq:ciarlet_raviart_improved}
|(f_0,v)_\Omega + (f_1,\nabla v)_\Omega| \lesssim |E|_\dim^{\frac{1}{2}-\frac{1}{q_*}} \left( \norm{f_0}_{L^{q_*/2}(\Omega)} +\norm{f_1}_{L^{q_*}(\Omega;\R^\dim)} \right)\norm{v}_{H^1(\Omega)},
\end{equation}
where $|E|_\dim$ is the Lebesgue measure of $E$. 
In other words, it is sufficient for the analysis to have $f_0=g_0\in L^{q/2}(\Omega)$ for some $q>\dim$ instead of the condition $f_0\in L^q(\Omega)$, $q>\dim$, that was assumed in~\cite{ciarlet1973maximum}.
Indeed, to show~\eqref{eq:ciarlet_raviart_improved}, we define the index $\sigma\coloneqq \frac{2q_*}{q_*-2}$ and note that $\sigma< \frac{2\dim}{\dim-2}$ since $q_*>\dim$. 
Then, the H\"older inequality and the Sobolev embedding theorem imply that 
\begin{multline}
\abs{(f_0,v)_\Omega}  \leq \norm{f_0}_{L^{q_*/2}(\Omega)} \norm{v}_{L^{\frac{q_*}{q_*-2}}(\Omega)}
\\ \leq |E|_\dim^{1-\frac{2}{q_*}-\frac{1}{\sigma}}\norm{f_0}_{L^{q_*/2}(\Omega)}\norm{v}_{L^\sigma(\Omega)}
   \lesssim |E|_\dim^{\frac{1}{2}-\frac{1}{q_*}}\norm{f_0}_{L^{q_*/2}(\Omega)}\norm{v}_{H^1(\Omega)},
\end{multline}
where we have used above the identity $1-\frac{2}{q_*}-\frac{1}{\sigma}=\frac{1}{2}-\frac{1}{q_*}$ as a result of the choice of $\sigma$ above.
The rest of the proof then follows~\cite{ciarlet1973maximum} leading to the bound $\norm{m_\T}_{C(\overline{\Omega})}=\norm{m_\T}_{L^\infty(\Omega)}\lesssim \norm{f_0}_{L^{q_*/2}(\Omega)} +\norm{f_1}_{L^{q_*}(\Omega;\R^\dim)}$, which, in combination with~\eqref{eq:maxnorm_rhs_data_bounds}, yields~\eqref{eq:maxnorm_bound_example}.
\end{proof}

\subsection{Patchwise affine preservation}

The last assumption is that the stabilizations are \emph{patchwise affine-preserving} in the following sense.
\begin{enumerate}[label={(H\arabic*)},resume]
\item \label{H:stabilization_lin_preserving} \emph{Patchwise affine preservation.} The stabilizations are \emph{patchwise affine-preserving}: if $v_{\mathcal{T}}$ and $w_{\T}$ in $V(\T)$ are such that their restrictions to an interior vertex patch $\omega_{z}$, $z\in\calVi$, are affine functions over $\omega_{z}$, i.e. $v_{\mathcal{T}}|_{\omega_{z}}$ and $w_{\T}|_{\omega_{z}}$ are both in $\mathcal{P}_1\left(\omega_{z}\right)$, then
\begin{equation}
S_i(v_{\T},w_{\T};\psi_{z}) =0  \qquad \forall i\in \{1,2\}.
\end{equation}
\end{enumerate}
The concept of local affine preservation in~\ref{H:stabilization_lin_preserving} (sometimes also called local linearity preservation in the literature) is an important one in the design and analysis of stabilization schemes, leading to improved error bounds~\cite[]{barrenechea2024finite}.

\begin{example}\label{ex:affinepreservation}
A very simple example of a stabilization satisfying~\ref{H:stabilization_lin_preserving} is to consider $S_1$ and $S_2$ defined by~\eqref{eq:linear_stabilization} with $D_\T= \rho_\T \mathbb{I}_\dim$, where $\rho_\T>0$ is a constant, and where $\mathbb{I}_\dim$ denotes the $\dim \times \dim$ identity matrix.
Then, for any vertex $z\in \calVi$ and any $v_\T \in V(\T)$ that is affine on the vertex patch $\omz$, we have $S_i(v_\T,w_\T;\psi_z)=\rho_\T \int_{\omz} \nabla v_\T \cdot \nabla \psi_z\mathrm{d}x = - \rho_\T \int_{\omz} \Delta v_\T \psi_z=0$ since $\psi_z\in H^1_0(\omz)$ and $v_\T|_{\omz}\in H^2(\omz)$ with $\Delta v_\T|_{\omz} =0$. 
If the mesh is in addition strictly acute, then such a choice of stabilization is affine-preserving as well as enforcing a discrete maximum principle for all $\rho_\T$ sufficiently large, c.f.~\cite{osborne2024analysis}.
\end{example}

For more general meshes satisfying the Xu--Zikatanov condition, it is known that one can obtain both affine preservation and a DMP with \emph{linear} stabilizations provided that the mesh is symmetric with respect to its interior vertices.
Many more examples are provided by nonlinear stabilizations, which can be designed to satisfying~\ref{H:stabilization_lin_preserving} and a DMP on more general meshes, we refer the reader to \cite{barrenechea2024finite} for a comprehensive survey with many further references.

\begin{remark}[Patchwise and elementwise affine preservation]
In the context of \emph{a posteriori} error analysis, an \emph{elementwise} affine preservation property was considered in~\cite{AllendesBarrenecheaRankin2017}, which requires a decomposition of the stabilization over the mesh elements \cite[Eq.~(6)]{AllendesBarrenecheaRankin2017}, each of which is vanishing for functions that are locally affine over a certain neighbourhood~\cite[Assumption~3.2]{AllendesBarrenecheaRankin2017}.
Note that the elementwise affine preservation condition is not equivalent to the patchwise one considered here since Example~\ref{ex:affinepreservation} evidences a stabilization that satisfies~\ref{H:stabilization_lin_preserving} but not the elementwise condition from~\cite[Assumption~3.2]{AllendesBarrenecheaRankin2017}.
\end{remark}

\subsection{A posteriori error bounds}

One of the original contributions in this work is to show that in the case where \ref{H:stabilization_main}, \ref{H:stabilization_local_Lipschitz},~\ref{H:maxnorm_bound} and~\ref{H:stabilization_lin_preserving} all hold, then the stabilization estimators can be bounded in terms of the jump part of the elementwise estimators. Recall the definition of the jump terms $\jH$ and $\jF$ from~\eqref{eq:residual_estimators_jumps}.

\begin{theorem}[Bound on stabilization estimators]\label{thm:stab_jump_bound}
Assume~\ref{H:stabilization_main}, \ref{H:stabilization_local_Lipschitz}, \ref{H:maxnorm_bound},~and~\ref{H:stabilization_lin_preserving}.
Suppose that $(u_\T,m_\T)\in \left[V(\T)\right]^2$ is a solution of~\eqref{eq:num_scheme}.
Then
\begin{equation}\label{eq:stabilization_jump_bound}
\sum_{i=1}^2 \etastabi \lesssim \left(\sum_{i=1}^2\sum_{F\in\calFi} h_F\norm{\ji}_F^2 \right)^{\frac{1}{2}}.
\end{equation}
\end{theorem}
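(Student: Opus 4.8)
The plan is to estimate each stabilization estimator $\etastabi$ vertex by vertex. Since $S_i(u_\T,m_\T;\cdot)$ is linear in its third argument, I would expand a generic discrete test function as $v_\T=\sum_{z\in\calVi}v_\T(z)\psi_z$ and apply a weighted Cauchy--Schwarz inequality with weights $\abs{\omz}_d$. Combining the standard mass-lumping equivalence $\norm{v_\T}_\Omega^2\eqsim\sum_{z\in\calVi}\abs{\omz}_d\,\abs{v_\T(z)}^2$ with the Poincar\'e inequality on $H^1_0(\Omega)$ then reduces the problem to a local one:
\[
\etastabi \lesssim \Bigl(\sum_{z\in\calVi}\frac{\abs{S_i(u_\T,m_\T;\psi_z)}^2}{\abs{\omz}_d}\Bigr)^{\frac12},\qquad i\in\{1,2\},
\]
so it suffices to bound $\abs{S_i(u_\T,m_\T;\psi_z)}$ for each interior vertex $z\in\calVi$.

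The key step is to exploit the patchwise affine preservation~\ref{H:stabilization_lin_preserving} together with the local Lipschitz continuity~\ref{H:stabilization_local_Lipschitz}. Indeed, if $\hat a_z,\hat b_z\in V(\T)$ are \emph{any} functions whose restrictions to $\omz$ are affine, then $S_i(\hat a_z,\hat b_z;\psi_z)=0$ by~\ref{H:stabilization_lin_preserving}, so~\ref{H:stabilization_local_Lipschitz} gives
\[
\abs{S_i(u_\T,m_\T;\psi_z)}=\abs{S_i(u_\T,m_\T;\psi_z)-S_i(\hat a_z,\hat b_z;\psi_z)}\lesssim\abs{\omz}_d^{\frac12}\bigl(\norm{\nabla(u_\T-\hat a_z)}_{\omz}+\norm{\nabla(m_\T-\hat b_z)}_{\omz}\bigr).
\]
The remaining task is then to choose $\hat a_z$ and $\hat b_z$ so that these local best-approximation errors are controlled by the jump terms. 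To that end I would prove the following patch estimate: for every $v_\T\in V(\T)$ and every $z\in\calVi$ there is an affine function on $\omz$, which may be taken to vanish at the vertices of $\omz$ lying on $\partial\Omega$ --- hence extends to some $\hat a_z\in V(\T)$ with $\hat a_z|_{\omz}$ affine --- such that $\norm{\nabla(v_\T-\hat a_z)}_{\omz}^2\lesssim\sum_{F\in\calFiz}h_F\norm{\jump{\nabla v_\T\cdot n_F}}_F^2$. Because $\nabla v_\T$ is elementwise constant and, by shape-regularity, the elementwise gradients over $\omz$ carry comparable weights, the unconstrained version of this estimate follows by telescoping the gradient differences along paths in the (connected, boundedly sized) face-adjacency graph of the patch, each such difference being the jump $\jump{\nabla v_\T\cdot n_F}$ across an internal face $F\in\calFiz$.

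Applying this with $v_\T=u_\T$ and $v_\T=m_\T$, and recalling $\jH=\nu\jump{\nabla u_\T\cdot n_F}$, it only remains to rewrite $\norm{\jump{\nabla m_\T\cdot n_F}}_F$ in terms of $\jH$ and $\jF$: using the maximum-norm bound~\ref{H:maxnorm_bound} for $m_\T$ (valid since $(u_\T,m_\T)$ solves~\eqref{eq:num_scheme}), the Lipschitz bound~\eqref{eq:Hp_bound} on $\partial H/\partial p$, and the continuity of the tangential part of $\nabla u_\T$ across $F$, one obtains $\norm{\jump{\nabla m_\T\cdot n_F}}_F\leq\nu^{-1}\norm{\jF}_F+\nu^{-1}\widetilde{M}_\infty L_{H_p}\norm{\jump{\nabla u_\T\cdot n_F}}_F$. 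Hence $\abs{S_i(u_\T,m_\T;\psi_z)}^2/\abs{\omz}_d\lesssim\sum_{F\in\calFiz}h_F(\norm{\jH}_F^2+\norm{\jF}_F^2)$, and summing over $z\in\calVi$ while using that each interior face belongs to at most $d$ of the sets $\calFiz$ yields~\eqref{eq:stabilization_jump_bound}.

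The main obstacle I anticipate lies in the construction of $\hat a_z\in V(\T)$ when the patch $\omz$ meets $\partial\Omega$: there the affine approximant must additionally vanish at the boundary vertices of $\omz$ in order to belong to $V(\T)\subset H^1_0(\Omega)$, so one cannot simply take the unconstrained $L^2$-best affine approximation. I would resolve this using that $v_\T$ (that is, $u_\T$, resp.\ $m_\T$) itself vanishes at those vertices: after normalizing the unconstrained best affine approximant $a^*$ so that $a^*-v_\T$ has zero mean over $\omz$, a Poincar\'e inequality on the patch combined with an inverse inequality for piecewise-affine functions bounds $\abs{a^*}$ at every vertex of $\omz$ by $h_{\omz}^{1-d/2}\norm{\nabla(v_\T-a^*)}_{\omz}$; correcting $a^*$ by an affine function that vanishes at the boundary vertices then perturbs the gradient seminorm by at most $\mathcal{O}\bigl(\norm{\nabla(v_\T-a^*)}_{\omz}\bigr)$, which is reabsorbed into the jump terms. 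The other ingredients --- the mass-lumping equivalence, the bounded overlap of vertex patches, and the connectivity of vertex patches under shape-regularity --- are standard.
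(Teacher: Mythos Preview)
Your proposal is correct and follows essentially the same route as the paper: nodal expansion via linearity, use of \ref{H:stabilization_lin_preserving} and \ref{H:stabilization_local_Lipschitz} to reduce to a patchwise affine-approximation error, bounding this by jumps (the paper's Lemma~\ref{lem:affine_approximation}, which you prove equivalently by telescoping gradient differences across interior faces), and finally converting $\norm{\jump{\nabla m_\T\cdot n_F}}_F$ into $\norm{\jH}_F+\norm{\jF}_F$ via \ref{H:maxnorm_bound} and the Lipschitz bound~\eqref{eq:Hp_bound}. The one place you are more careful than the paper is the boundary-patch issue: the paper simply takes the approximants in $\mathcal{P}_1(\omz)$ and applies \ref{H:stabilization_local_Lipschitz}--\ref{H:stabilization_lin_preserving} directly, implicitly reading those hypotheses (via the locality already encoded in \ref{H:stabilization_local_Lipschitz}) as allowing evaluation of $S_i(\cdot,\cdot;\psi_z)$ on arbitrary affine functions on $\omz$; your correction step for patches touching $\partial\Omega$ is a valid way to make this rigorous under the literal statement of the hypotheses, but it is not needed under the paper's reading.
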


Theorem~\ref{thm:stab_jump_bound} implies that the total estimator $\eta(u_\T,m_\T)$ is then bounded the residual estimators. 
Indeed, recalling~\eqref{eq:total-estimator}, it is then immediate from~\eqref{eq:stabilization_jump_bound} that  
\begin{equation}\label{eq:totalestimator_bound_by_res}
\eta(u_\T,m_\T) \lesssim \sum_{i=1}^2\etaResi.
\end{equation}
This gives a bound for the error that requires only the residual estimator, i.e.\ in this case, it is not necessary to compute the stabilization estimator.

\begin{theorem}[Upper bound on the error for affine-preserving stabilizations]\label{thm:upper-bound-res}
Assume that $F\colon \mathcal{H}\rightarrow L^2(\Omega)$.
Assume \ref{H:stabilization_main}, \ref{H:stabilization_local_Lipschitz},~\ref{H:maxnorm_bound} and \ref{H:stabilization_lin_preserving}. 
Suppose that $(u_\T,m_\T)\in \left[V(\T)\right]^2$ is a solution of~\eqref{eq:num_scheme}.
There exists an $\widetilde{\mathcal{R}_*}>0$ such that if $\sum_{i=1}^2\etaResi\leq \widetilde{\mathcal{R}_*}$, then
\begin{equation}\label{eq:error_upper_bound_special}
\norm{u-u_\T}_{H^1(\Omega)}+\norm{m-m_\T}_{H^1(\Omega)}\lesssim \sum_{i=1}^2 \etaResi.
\end{equation}
\end{theorem}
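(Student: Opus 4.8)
The plan is to derive this statement as an essentially immediate consequence of Theorem~\ref{thm:error_upper_bound} together with the bound on the stabilization estimators established in Theorem~\ref{thm:stab_jump_bound}. The key observation is that the smallness condition in Theorem~\ref{thm:error_upper_bound} is phrased in terms of the full estimator $\eta(u_\T,m_\T)$, which contains the stabilization contributions $\etastabi$; the point of the present statement is that, under the additional structural hypotheses~\ref{H:stabilization_local_Lipschitz},~\ref{H:maxnorm_bound} and~\ref{H:stabilization_lin_preserving}, those contributions are dominated by the jump parts of the residual estimators, and hence by $\sum_{i=1}^2\etaResi$, so that both the smallness condition and the resulting upper bound can be re-expressed using only the residual estimators.

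First I would appeal to~\eqref{eq:totalestimator_bound_by_res}, which follows from Theorem~\ref{thm:stab_jump_bound} on noting that the jump terms $h_F\norm{\ji}_F^2$ occur among the summands defining $\etaResi$ in~\eqref{eq:element_estimators}--\eqref{eq:residual-estimator}; this provides a constant $C>0$, depending only on the shape-regularity of $\T$, the problem data, and the constants implicit in~\ref{H:stabilization_local_Lipschitz} and~\ref{H:maxnorm_bound}, such that $\eta(u_\T,m_\T)\leq C\sum_{i=1}^2\etaResi$. I would then set $\widetilde{\mathcal{R}_*}\coloneqq \widetilde{\mathcal{R}}_*/C$, where $\widetilde{\mathcal{R}}_*$ is the constant furnished by Theorem~\ref{thm:error_upper_bound} under the hypotheses~\ref{H:stabilization_main}, $G\in L^2(\Omega)$ and $F\colon \mathcal{H}\to L^2(\Omega)$, all of which are in force here. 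Consequently, whenever $\sum_{i=1}^2\etaResi\leq \widetilde{\mathcal{R}_*}$, we obtain $\eta(u_\T,m_\T)\leq \widetilde{\mathcal{R}}_*$, so Theorem~\ref{thm:error_upper_bound} applies and yields $\norm{u-u_\T}_{H^1(\Omega)}+\norm{m-m_\T}_{H^1(\Omega)}\lesssim \eta(u_\T,m_\T)$; a final use of~\eqref{eq:totalestimator_bound_by_res} then chains these bounds together to give~\eqref{eq:error_upper_bound_special}.

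Since the genuine work has already been carried out in Theorems~\ref{thm:error_upper_bound} and~\ref{thm:stab_jump_bound} — indeed the substantive content lies entirely in Theorem~\ref{thm:stab_jump_bound}, whose proof exploits the affine-preservation and local Lipschitz structure — I do not anticipate any real obstacle in this argument. The only points deserving a little care are to rescale the smallness threshold consistently, so that the hypothesis $\sum_{i=1}^2\etaResi\leq \widetilde{\mathcal{R}_*}$ indeed forces $\eta(u_\T,m_\T)$ below the threshold $\widetilde{\mathcal{R}}_*$ needed by Theorem~\ref{thm:error_upper_bound}, and to record that the hypotheses of that theorem —~\ref{H:stabilization_main}, $G\in L^2(\Omega)$, and $F\colon\mathcal{H}\to L^2(\Omega)$ — are all subsumed by the assumptions of the present statement, so that no further conditions are incurred.
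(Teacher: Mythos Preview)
Your proposal is correct and matches the paper's approach exactly: the paper's proof consists of the single sentence that the result follows directly from~\eqref{eq:totalestimator_bound_by_res} and Theorem~\ref{thm:error_upper_bound}, and you have simply spelled out the rescaling of the smallness threshold that makes this work. There is nothing to add.
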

\begin{proof}
This result follows directly from~\eqref{eq:totalestimator_bound_by_res} and from Theorem~\ref{thm:error_upper_bound} above.
\end{proof}

\begin{remark}
From a theoretical perspective, the interest of~\eqref{eq:error_upper_bound_special}  arises from the fact that in general, numerical schemes with stabilization fail to satisfy the Galerkin orthogonality property on the residual, and thus the dual norm on the residual is then generally not localizable.
Furthermore, the residual estimators $\etaResi$ have the additional advantage of being locally efficient, as shown in Theorem~\ref{theorem:eta-lower-bound} above. 
For practical computation, this also has the benefit of allowing one to avoid the explicit computation of the stabilization estimators. 
This highlights in particular the role of the patchwise affine preservation condition on the stabilizations in the \emph{a posteriori} error analysis of stabilized finite element methods.
\end{remark}

\subsection{Proof of Theorem~\ref{thm:stab_jump_bound}}

We shall use the following well-known approximation result for piecewise polynomial functions.
\begin{lemma}\label{lem:affine_approximation}
For each $v\in V(\T)$ and each vertex $z\in \calVi$, there holds
\begin{equation}\label{eq:affine_approximation}
\inf_{ Q \in \mathcal{P}_1(\omz) }\left\{ \sum_{k=0}^1 h_{
\omz}^{2k-2}\norm{\nabla^k (v-Q)}^2_{\omz}  \right\} \lesssim \sum_{F\in\calFiz} h_F \norm{\jump{\nabla v\cdot n_F}}_F^2,
\end{equation}
where $\mathcal{P}_1(\omz)$ denotes the space of affine functions on $\omz$.
The hidden constant in~\eqref{eq:affine_approximation} depends only on the dimension $\dim$ and the shape-regularity constant $\theta_\T$.
\end{lemma}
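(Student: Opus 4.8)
The plan is to prove this by a standard scaling and compactness (Bramble--Hilbert / Deny--Lions type) argument on the reference patch, combined with a face-jump seminorm that vanishes precisely on affine functions. First I would fix a vertex $z\in\calVi$ and observe that, by shape-regularity, the patch $\omz$ is the image of a reference patch $\widehat{\omega}$ (from a finite catalogue of reference configurations, up to affine equivalence) under an affine map whose Jacobian scales like $h_{\omz}$; the number of distinct reference patch shapes is bounded in terms of $\theta_\T$ only. On the reference patch, one works with the space $\widehat{V}$ of continuous piecewise-affine functions and the seminorm $\abs{\widehat v}_*^2 \coloneqq \sum_{\widehat F} \norm{\jump{\nabla \widehat v\cdot n_{\widehat F}}}_{\widehat F}^2$ summed over the interior faces of the reference patch. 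The key algebraic fact is that $\abs{\widehat v}_* = 0$ if and only if $\widehat v \in \mathcal{P}_1(\widehat\omega)$: vanishing of all normal-derivative jumps across every interior face forces $\nabla \widehat v$ to be globally constant on the connected patch, hence $\widehat v$ affine.

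Next I would invoke a Deny--Lions / Peetre--Tartan argument: on the finite-dimensional space $\widehat V$, the quotient seminorm $\inf_{Q\in\mathcal{P}_1}\norm{\widehat v - Q}_{H^1(\widehat\omega)}$ and the seminorm $\abs{\widehat v}_*$ are both continuous, and the latter vanishes exactly on $\mathcal{P}_1$; since all norms on a finite-dimensional space are equivalent (or by a direct compactness argument on the unit sphere of $\widehat V / \mathcal{P}_1$), there is a constant $\widehat C$, depending only on the reference configuration, with
\begin{equation}\label{eq:ref_equiv}
\inf_{Q\in\mathcal{P}_1(\widehat\omega)}\norm{\widehat v - Q}_{H^1(\widehat\omega)}^2 \le \widehat C\, \abs{\widehat v}_*^2 \qquad \forall\, \widehat v\in\widehat V.
\end{equation}
Taking the maximum of $\widehat C$ over the finite list of reference patch shapes gives a single constant depending only on $\dim$ and $\theta_\T$.

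Finally I would transfer \eqref{eq:ref_equiv} back to $\omz$ by the affine change of variables. Under the pullback, $h_{\omz}^{-2}\norm{v-Q}_{\omz}^2 + \norm{\nabla(v-Q)}_{\omz}^2$ scales, up to constants depending only on $\theta_\T$, like $h_{\omz}^{\dim-2}\big(\norm{\widehat v - \widehat Q}_{\widehat\omega}^2 + \norm{\widehat\nabla(\widehat v - \widehat Q)}_{\widehat\omega}^2\big)$, while each boundary term $h_F\norm{\jump{\nabla v\cdot n_F}}_F^2$ scales like $h_{\omz}^{\dim-2}$ times the corresponding reference face term (one power of $h_{\omz}$ from $h_F\eqsim h_{\omz}$, $\dim-1$ powers from the surface measure, and $-2$ powers from $\nabla v = h_{\omz}^{-1}\widehat\nabla\widehat v$); the two $h_{\omz}^{\dim-2}$ factors cancel, yielding \eqref{eq:affine_approximation}. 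The main obstacle, such as it is, is purely bookkeeping: ensuring that the "finite catalogue of reference patches up to affine equivalence, with cardinality controlled by $\theta_\T$" claim is correctly justified (so that $\widehat C$ is a genuine constant and not patch-dependent), and tracking the powers of $h_{\omz}$ consistently in the scaling so that the mesh-size factors cancel exactly; neither step is deep, and both are standard in the finite-element literature.
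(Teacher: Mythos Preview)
Your approach is correct in spirit but takes a genuinely different route from the paper. The paper does not use a reference-patch / scaling / compactness argument at all. Instead, it invokes a generalized Poincar\'e inequality for piecewise polynomials on a patch (from Buffa--Ortner, \cite[Theorem~3.1]{BuffaOrtner2009}):
\[
\norm{w - \overline{w}}_{\omz}\lesssim h_{\omz}\Big(\sum_{K\in \Tz} \norm{\nabla w}_{K}^2+ \sum_{F\in\calFiz} h_F^{-1}\norm{\jump{w}}_F^2 \Big)^{1/2},
\]
applied first to each component of the piecewise-constant $\nabla v$ (for which the elementwise gradients vanish and only normal jumps survive), and then to the continuous function $v - q_1\cdot x$ (for which the jump terms vanish). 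This gives the gradient and $L^2$ bounds directly, with explicit constants depending only on shape-regularity, and with no appeal to compactness or a catalogue of reference patches.

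On your argument: the claim that vertex patches come from ``a finite catalogue of reference configurations, up to affine equivalence'' is not literally true. An affine map is determined by its action on a single simplex, so two patches with the same combinatorics but different element geometries are generally \emph{not} affine-equivalent. What shape-regularity gives you is finitely many \emph{combinatorial} types together with a \emph{compact} set of admissible geometries for each type (after normalizing diameter); the constant $\widehat C$ then depends continuously on the geometry and attains its maximum on this compact set. You do flag this as the main bookkeeping obstacle, but the resolution requires this compactness-of-geometries step rather than affine equivalence to finitely many models. The paper's direct Poincar\'e-inequality route sidesteps this issue entirely and is arguably cleaner here; your approach, once repaired, has the advantage of being the standard template for proving such inequalities when a suitable off-the-shelf Poincar\'e result is not already available.
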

\begin{proof}
The proof is well-known so we include it here only for completeness.
Let $z\in\calVi$ be a fixed but arbitrary interior vertex.
In \cite[Theorem~3.1]{BuffaOrtner2009}, it is shown that any function $w\colon \omz\tends \R$ that is piecewise polynomial with respect to the elements in $\Tz$ satisfies the generalized Poincar\'e inequality
\begin{equation}\label{eq:piecewise_constant_approx}
\norm{w - \overline{w}}_{\omz}\lesssim h_{\omz}\left(\sum_{K\in \Tz} \norm{\nabla w}_{K}^2+ \sum_{F\in\calFiz} h_F^{-1}\norm{\jump{w}}_F^2 \right)^{\frac{1}{2}}, 
\end{equation}
where $\overline{w}\coloneqq \frac{1}{\abs{\omz}_{\dim}}\int_{\omz}w\dx$ denotes the mean-value of $w$ over $\omz$.

Given a $v\in V(\T)$, we then define $Q\in \mathcal{P}_1(\omz)$ by $Q(x)\coloneqq q_1{\cdot} x + q_0$ for all $x\in\omz$, where
\begin{equation}
q_1\coloneqq \frac{1}{\abs{\omz}_\dim}\int_{\omz}\nabla v\,\dx \in \R^\dim ,\quad q_0 \coloneqq \frac{1}{\abs{\omz}_\dim}\int_{\omz}(v-q_1{\cdot} x)\,\dx\in \R.
\end{equation}
We therefore deduce from~\eqref{eq:piecewise_constant_approx} applied to the components of $\nabla v$ that
\begin{equation}\label{eq:piecewise_affine_approx_1}
\norm{\nabla(v-Q)}_{\omz} = \norm{\nabla v - q_1}_{\omz} \lesssim \left(\sum_{F\in\calFiz} h_F \norm{\jump{\nabla v{\cdot} n_F}}_F^2\right)^{\frac{1}{2}},
\end{equation}
where we have used the bound $h_{\omz}\lesssim h_F$ for some constant depending only on the shape-regularity paramter, and where we have also used the facts that $\nabla v$ is piecewise constant and that only the normal component of $\nabla v$ is possibly discontinuous across mesh faces.
We then use~\eqref{eq:piecewise_constant_approx} again to conclude that
\begin{equation}\label{eq:piecewise_affine_approx_2}
\norm{v - Q}_{\omz} = \norm{v-q_1{\cdot} x - \overline{v-q_1{\cdot} x}}_{\omz} \lesssim h_{\omz} \norm{\nabla v - q_1}_{\omz},
\end{equation} 
where we have used the fact that $v-q_1{\cdot} x$ is continuous on $\omz$ so the jump-terms vanish.
We therefore obtain~\eqref{eq:affine_approximation} from \eqref{eq:piecewise_affine_approx_1} and \eqref{eq:piecewise_affine_approx_2}.
\end{proof}

\paragraph{Proof of Theorem~\ref{thm:stab_jump_bound}.}

For each function $w_\T\in V(\T)$ and each vertex $z\in\calVi$, let $\overline{w}_{\T}^z \in \mathcal{P}_1(\omz)$ denote the unique minimizer of the left-hand side of~\eqref{eq:affine_approximation}, where we note that a unique minimizer exists since $\mathcal{P}_1(\omz)$ is a finite dimensional space.
Now, consider a fixed but arbitrary $i\in\{1,2\}$ and a fixed but arbitrary $v_\T\in V(\T)$. Then, expanding $v_\T$ in the nodal basis of $V(\T)$, we may write $v_\T=\sum_{z\in\calVi}v_\T(z) \psi_z$.
Then, using the linearity of the stabilization in the last argument, we write
\begin{equation}\label{eq:jump_stab_bound_1}
\begin{aligned}
\abs{S_i(u_\T,m_\T;v_\T)} &= \left\lvert\sum_{z\in\calVi} v_\T(z) S_i(u_\T,m_\T;\psi_z)\right\rvert \\
&\underset{\ref{H:stabilization_lin_preserving}}{=} \left\lvert \sum_{z\in\calVi} v_\T(z) \left[S_i(u_\T,m_\T;\psi_z)-S_i(\overline{m}_T^z,\overline{u}_\T^z;\psi_z)\right]\right\rvert 
\\
&\underset{\ref{H:stabilization_local_Lipschitz}}{\lesssim} \sum_{z\in\calVi} \abs{v_\T(z)} \abs{\omz}_d^{\frac{1}{2}}\left[\norm{\nabla(m_\T-\overline{m}_\T^z)}_{\omz}+\norm{\nabla(u_\T-\overline{u}_{\T}^z)}_{\omz}\right]
\\& \lesssim \left(\sum_{z\in\calVi} \abs{v_\T(z)}^2\abs{\omz}\right)^{\frac{1}{2}}\left(\sum_{z\in\calVi}\left[\norm{\nabla(m_\T-\overline{m}_\T^z)}^2_{\omz}+\norm{\nabla(u_\T-\overline{u}_{\T}^z)}^2_{\omz}\right]\right)^{\frac{1}{2}}.
\end{aligned}
\end{equation}

It follows from inverse inequalities that $\sum_{z\in\calVi}\abs{v_\T(z)}^2\abs{\omz}_d \lesssim \norm{v_\T}_{\Omega}^2$ for all $v_\T\in V(\T)$.
The Poincar\'e inequality thus implies that
\begin{equation}\label{eq:jump_stab_bound_2}
\sup_{v_\T\in V(\T)\setminus\{0\}} \frac{S_i(u_\T,m_\T;v_\T)}{\norm{\nabla v_\T}_\Omega}\lesssim \left(\sum_{z\in\calVi}\left[\norm{\nabla(m_\T-\overline{m}_\T^z)}^2_{\omz}+\norm{\nabla(u_\T-\overline{u}_{\T}^z)}^2_{\omz}\right] \right)^{\frac{1}{2}}.
\end{equation} 
Using the definitions of $\overline{m}_\T^z$ and $\overline{u}_\T^z$ above, we now apply Lemma~\ref{lem:affine_approximation} to bound the right-hand side of~\eqref{eq:jump_stab_bound_2}, thus giving
\begin{multline}
\sum_{z\in\calVi}\left[\norm{\nabla(m_\T-\overline{m}_\T^z)}^2_{\omz}+\norm{\nabla(u_\T-\overline{u}_{\T}^z)}^2_{\omz}\right] 
\\ \lesssim \sum_{z\in\calVi}\sum_{F\in\calFiz} \left[h_F\norm{\jump{\nabla m_\T\cdot n_F}}_F^2+ h_F\norm{\jump{\nabla u_\T\cdot n_F}}_F^2\right]
\\ \lesssim \sum_{F\in\calFi} \left[h_F\norm{\jump{\nabla m_\T\cdot n_F}}_F^2+ h_F\norm{\jump{\nabla u_\T\cdot n_F}}_F^2\right],
\end{multline}
where, in passing to the final line above, we have used a counting argument based on the fact that each face $F\in\calFi$ is associated to at most $d$ vertices in $\mathcal{V}$.
Recalling the definition of the jumps in~\eqref{eq:residual_estimators_jumps}, we see that in order to obtain~\eqref{eq:stabilization_jump_bound} and thereby complete the proof, it remains only to show that, for each $F\in \calFi$, we have 
\begin{equation}\label{eq:jump_stab_5}
\norm{\jump{\nabla m_\T\cdot n_F}}_F \lesssim \norm{\jH}_F+\norm{\jF}_F.
\end{equation}
Indeed, applying the triangle inequality, we have
\begin{equation}\label{eq:jump_stab_bound_5}
\norm{\jump{\nabla m_\T\cdot n_F}}_F \lesssim\norm{\jF}_F+\norm{m_\T\jump{\frac{\partial H}{\partial p}[\nabla u_\T]\cdot n_F}}_F,
\end{equation}
with a hidden constant that depends on $\nu$.
We now use the maximum norm bound~\ref{H:maxnorm_bound} to obtain
\begin{equation}\label{eq:jump_stab_bound_4}
\norm{m_\T\jump{\frac{\partial H}{\partial p}[\nabla u_\T]\cdot n_F}}_F \leq
\widetilde{M}_\infty \norm{\jump{\frac{\partial H}{\partial p}[\nabla u_\T]\cdot n_F}}_F.
\end{equation}
Note that the Lipschitz continuity~\eqref{eq:Hp_bound} implies that, for each $F\in \calFi$, we have
\begin{equation}\label{eq:jump_stab_bound_3}
\begin{split}
\norm{\jump{\frac{\partial H}{\partial p}[\nabla u_\T]\cdot n_F} }_F 
\leq L_{H_p} \norm{\nabla u_\T|_K-\nabla u_\T|_{K^\prime} }_F
 = L_{H_p} \norm{\jump{\nabla u_\T\cdot n_F}}_F \lesssim \norm{\jH}_F,
\end{split}
\end{equation}
where $\{K,K^\prime\}\subset \T$ are the elements such that $F=\overline{K}\cap \overline{K^\prime}$. Note that in~\eqref{eq:jump_stab_bound_3} above, we have used the continuity across faces of the tangential component of $\nabla u_\T$, recalling that $u_\T\in V(\T)$ is continuous.
Therefore, we combine~\eqref{eq:jump_stab_bound_5}, \eqref{eq:jump_stab_bound_4} and \eqref{eq:jump_stab_bound_3} to deduce~\eqref{eq:jump_stab_5}, thereby completing the proof.
\hfill\qedsymbol

\section{Numerical Experiments}\label{sec:numerics}
In this section we present numerical experiments that show the computational performance of the \textit{a posteriori} error estimators of Sections \ref{sec:general-estimates} and \ref{sec:affine_preserving_stab}, and that demonstrate how the estimators lead to potential improvements in efficiency and accuracy in adaptive computations.
Our numerical experiments are performed within the open-source finite element software Firedrake \cite{FiredrakeUserManual}. The resulting system of equations are solved using the built in nonlinear solver of Firedrake, which uses a Newton line search algorithm. 
Mesh construction and refinement are handled using the NETGEN python bindings within Firedrake. 

\subsection{Numerical validation of theoretical error bounds}
The first experiment aims to validate the theoretical error bounds of Sections~\ref{sec:general-estimates} and~\ref{sec:affine_preserving_stab} above.

\paragraph{Experiment 1.}
The domain is defined as the unit square $\Omega = (0,1)^2$.
The PDE data for the MFG system \eqref{sys} is chosen as $\nu = 1/10$, and $H[\nabla u] = \sqrt{\abs{\nabla u}^2 +1}$.
The source term $G(x)$ and coupling term $F[m]$ are chosen \textit{a priori} such that 
\begin{subequations}\label{eq:Experiment-1-solution}
\begin{align}
    u(x,y) &= \left(50(x - y)^2-1 \right) \left( x (1 - x) \, y (1 - y) \right), \\
    m(x,y) &=  \exp\left( -50(x - y)^2 \right) \left( x (1 - x) \, y (1 - y) \right),
\end{align}
\end{subequations}
satisfies \eqref{sys} along with homogeneous Dirichlet boundary conditions on $\partial \Omega$.
To maintain the nonlinear coupling in the MFG system, the cost function is defined by $F[\tilde{m}] = \tilde{m} - m_0(x,y)$, where $m_0$ is the unique function ensuring that the solution is given by $(u,m)$ in \eqref{eq:Experiment-1-solution} above. 
Likewise, the datum~$G$ is obtained from~\eqref{eq:Experiment-1-solution} above.
The solution is smooth, although the density is concentrated along the diagonal of the domain.
We perform the computations on a sequence of uniform meshes with right-angle triangles, thus ensuring that each mesh in the sequence satisfies the Xu--Zikatanov condition.
We choose the stabilizations as defined in~\eqref{eq:linear_stabilization} and~\eqref{edge-tensor-formula} with the edge-weights chosen as $\gamma_{\T,E} = \diam E$.  
The stabilization estimator is approximated by $\widetilde{\eta}_{\mathrm{stab},i}$ as detailed in Remark~\ref{rem:discrete_computation}.
In Figure \ref{fig:MFG_experiment_1_estimators}, we compare the total $H^1$-norm of the error of the FEM solution, the total estimator $\eta(u_\T,m_\T)$, and the residual estimator.
We first observe, in agreement with the error bounds of \cite{osborne2024near}, that the $H^1$-norm of the error and the estimators converge to zero at a rate of $O(N^{-1/2})$, where $N$ denotes the number of degrees of freedom. Secondly, both the total estimator $\eta(u_\T,m_\T)$ and residual estimator bound the $H^1$ error up to constants. 
We conclude this experiment by noting that the observation of equivalence between the total and residual estimator is as expected from Theorem~\ref{thm:stab_jump_bound}.

\begin{figure}
    \centering
    \begin{tikzpicture}
    \begin{groupplot}[group style={
                      group name= myplot,
                      group size= 1 by 1,
                      horizontal sep=1.5cm,
                      vertical sep=1.5cm},
                      width=0.65\linewidth,
                      height=0.65\linewidth,
                      xmode=log,
                      ymode=log, xmax=1e6,
                      axis background/.style={fill=gray!0}, 
                      grid=both,
                      grid style={line width=.1pt, draw=gray!10},
                      major grid style={line width=.2pt,draw=gray!50}]
                        
        \nextgroupplot[xlabel={Degrees of freedom},legend pos=north east]
                \addplot+[mark=triangle, thick, black, mark options={black, solid}] table [x=num_dofs, y=H1_error, col sep=comma] {experiment_1_uniform.csv};
                \addplot+[mark=square, thick, blue, mark options={blue, solid}] table [x=num_dofs, y=eta, col sep=comma] 
                {experiment_1_uniform.csv};
                \addplot+[mark=diamond, thick, red, mark options={red, solid}] table [x=num_dofs, y=eta+stab, col sep=comma] 
                {experiment_1_uniform.csv};
                \legend{$H^1$ error, residual estimator, total estimator};

                \addplot[mark=none, solid, black] coordinates {(100000*0.3, 1.5/100) (100000*3, 1.5/100*10^-0.5)}; 
                \addplot[mark=none, solid, black] coordinates {(100000*3, 1.5/100*10^-0.5) (100000*0.3, 1.5/100 * 10^-0.5)}; 
                \addplot[mark=none, solid, black] coordinates {(100000*0.3, 1.5/100 * 10^-0.5) (100000*0.3, 1.5/100)}; 
                \addplot[mark=none] coordinates {(100000*0.3, 1.5/200+1.5/200*10^-0.5)} node[anchor=east] {$0.5$};        
    \end{groupplot}
\end{tikzpicture}
\caption{Experiment 1: Comparisons between the $H^1$-norm of the error, the total estimator $\eta(u_\T,m_\T)$, and residual estimator.} \label{fig:MFG_experiment_1_estimators}
\end{figure}
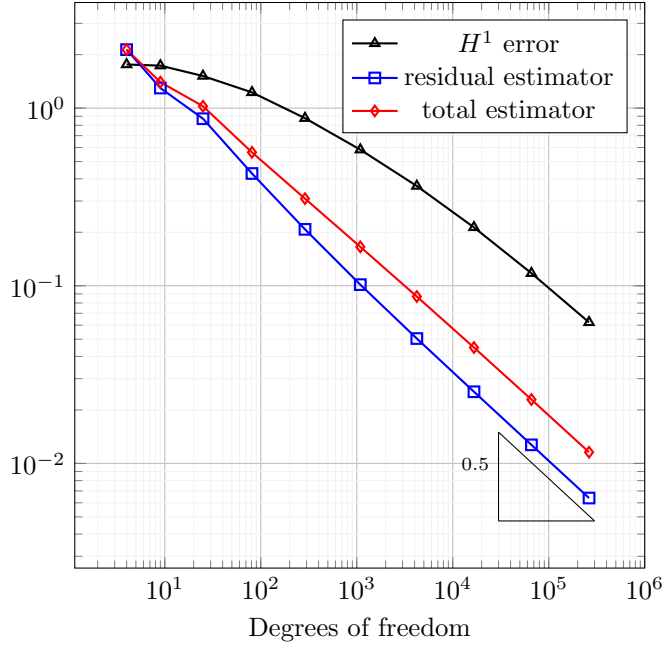

\subsection{Adaptive FEM}
Our remaining experiments focus on adaptive FEM for stationary MFG. 
In contrast to Experiment~1, we consider below problems for which the exact solution is not known analytically, further highlighting the benefits of \textit{a posteriori} error bounds for monitoring the error. 
As an extension of the theory, we consider models with mixed boundary conditions of Dirichlet and Neumann types, which are furthermore nonlinear for the case of the density function. 
In applications, Neumann boundary conditions model situations where there is a prescribed flux of players entering or exiting the game on parts of the boundary.
Note that handling mixed boundary conditions requires a small adjustment to the estimators, as we now detail.
Suppose that $\partial \Omega$ is partitioned as $\partial \Omega = \GammaD \cup \GammaN$ into a Dirichlet part~$ \GammaD$ and a Neumann part~$\GammaN$.
The mesh $\T$ is additionally required to satisfy the condition that every face on the boundary is wholly contained in either the Dirichlet or Neumann part of the boundary. 
In the examples below, we consider Neumann conditions of the form $\nu \nabla u {\cdot} n = g_2$ and $\nu \nabla m {\cdot} n  + m \frac{\partial H}{\partial p}[\nabla u]{\cdot} n =g_3$ on $\GammaN$, where $n$ denotes the unit outward normal vector to $\partial\Omega$, and $g_2$, $g_3  \in L^2(\GammaN)$ are prescribed functions. 
Notice that the Neumann condition for the density is nonlinear and depends also on $u$.
The estimators are then adjusted as follows.
For each $K\in\T$, let $\calFinK$ denote the set of faces of $K$ that are either in $\calFiK$ or subsets of $\GammaN$.
We then extend the jump terms~\eqref{eq:residual_estimators_jumps} to Neumann boundary faces by defining $j_{F,1} \coloneqq \nu \nabla u_\mathcal{T} {\cdot} n_F - g_2$, and $j_{F,2} \coloneqq \nu \nabla m_\T {\cdot} n_F + m_\T \frac{\partial H}{\partial p}[\nabla u_\mathcal{T}] {\cdot} n_F - g_3$ for each face $F\subset \GammaN$, where $n_F$ denotes the unit outward normal to $\Omega$ on $F$.
We then replace~\eqref{eq:element_estimators} by $[\etaKi]^2\coloneqq h_K^2 \norm{\rKi}^2_K+ \sum_{F\in \calFinK}h_F\norm{\ji}_F^2$, i.e.\ we extend the face terms to include the contributions from faces in $\GammaN$.

In the following experiments, the adaptive mesh refinement is performed using the D\"{o}rfler/bulk-chasing marking strategy for the residual estimators, i.e.\ in each iteration of the adaptive loop, we refine the subset of elements $\mathcal{M} \subset \T$ with minimal cardinality that satisfies
\begin{align}
    \sum_{i=1}^2 \left( \sum_{K \in \mathcal{M}} \eta_{K,i}^2 \right)^{\frac{1}{2}} \geq \theta \sum_{i=1}^2 \etaResi, \label{eq:marking-strategy}
\end{align}
where $\theta \in (0,1)$ is a chosen parameter. In the computations below we use $\theta = \frac{3}{10}$.
Marked elements are refined via the newest vertex bisection algorithm.
Regarding the choice of stabilization, we use the stabilizations defined in~\eqref{eq:linear_stabilization} and~\eqref{edge-tensor-formula} with the local edge-weights chosen as $\gamma_{\T,E} = \diam E$, where the sum in~\eqref{edge-tensor-formula} is additionally extended to include boundary edges that are contained in the Neumann part of the boundary.

\paragraph{Experiment 2.}
We consider the MFG system on a nonconvex L-shaped domain defined by $\Omega = (-1,1)^2 \setminus [0,1]^2$. 
The PDE data for this test is chosen to be $\nu =1$, $H[\nabla u] = \sqrt{\abs{\nabla u}^2 +1}$, $F[m]=m$, and $G=0$.
Let $\GammaD \coloneqq \{ (x,y) \in \partial \Omega\ :\ x=0  \text{  or  } y=0 \}$ denote the Dirichlet part of the boundary, on which we impose the conditions $u(x,y) = \abs{x}+\abs{y}-1$ and $m(x,y) = 0$ for all $(x,y)\in\GammaD$.
The Neumann conditions on $\GammaN = \partial \Omega \setminus \GammaD$ are given by $\nu \nabla u \cdot n =0$ on $\GammaN$, and $  \nu \nabla m \cdot n + m \frac{\partial H}{\partial p}[\nabla u] \cdot n  = \chi_{\GammaIn}$ on $\GammaN$, where $\chi_{\GammaIn}$ denotes the indicator function of $\GammaIn=\{ (x,y) \in \partial \Omega\ :\ x=-1 \text{  or  } y=-1 \}$. Observe that the Neumann conditions are thus nonlinear.
Such boundary conditions represent a situation where players can exit the game along the edges forming the re-entrant corner, with the minimal exit cost attained on the corner exactly. 
The Neumann condition prevents players from exiting the game on $\GammaN\setminus \GammaIn$ while the positive flux condition on $\GammaIn$ represents a constant inflow of players into the game. 
Note that despite taking $G=0$, the density is nonvanishing thanks to the inhomogenous boundary condition on $\GammaN$.
The initial computational mesh is composed of right-angled triangular elements, therefore each mesh produced by the newest vertex bisection algorithm will satisfy the Xu--Zikatanov condition.
An example of one of the adaptively refined meshes is depicted in Figure~\ref{fig:MFG_experiment_2_meshes}, in which we see that the mesh refinements are concentrated around the re-entrant corner.
In Figure~\ref{fig:MFG_experiment_2_estimators}, we directly compare the performance between adaptive and uniform mesh refinements: whereas adaptive refinement maintains the $O(N^{-1/2})$ rate of convergence with respect to the number of degrees of freedom, uniform mesh refinement converges at a lower rate of $O(N^{-1/3})$, where $N$ denotes the number of degrees of freedom.

\begin{figure}
    \centering
    \subfloat[$0$ adaptive mesh refinements]{
        \includegraphics[width=0.5\linewidth,trim=300 50 300 50, clip]{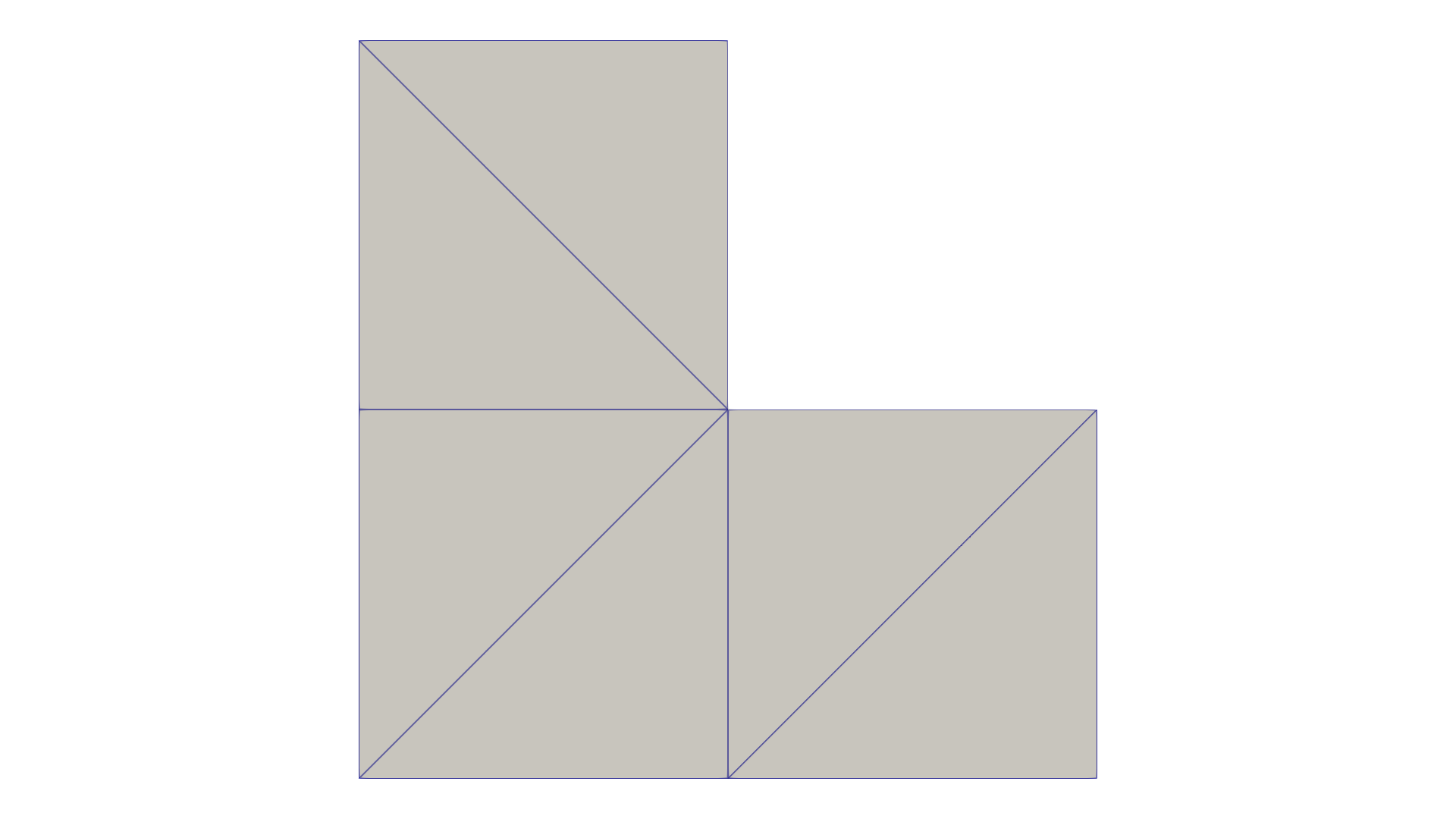}}
    \subfloat[$10$ adaptive mesh refinements]{
        \includegraphics[width=0.5\textwidth,trim=300 50 300 50, clip]{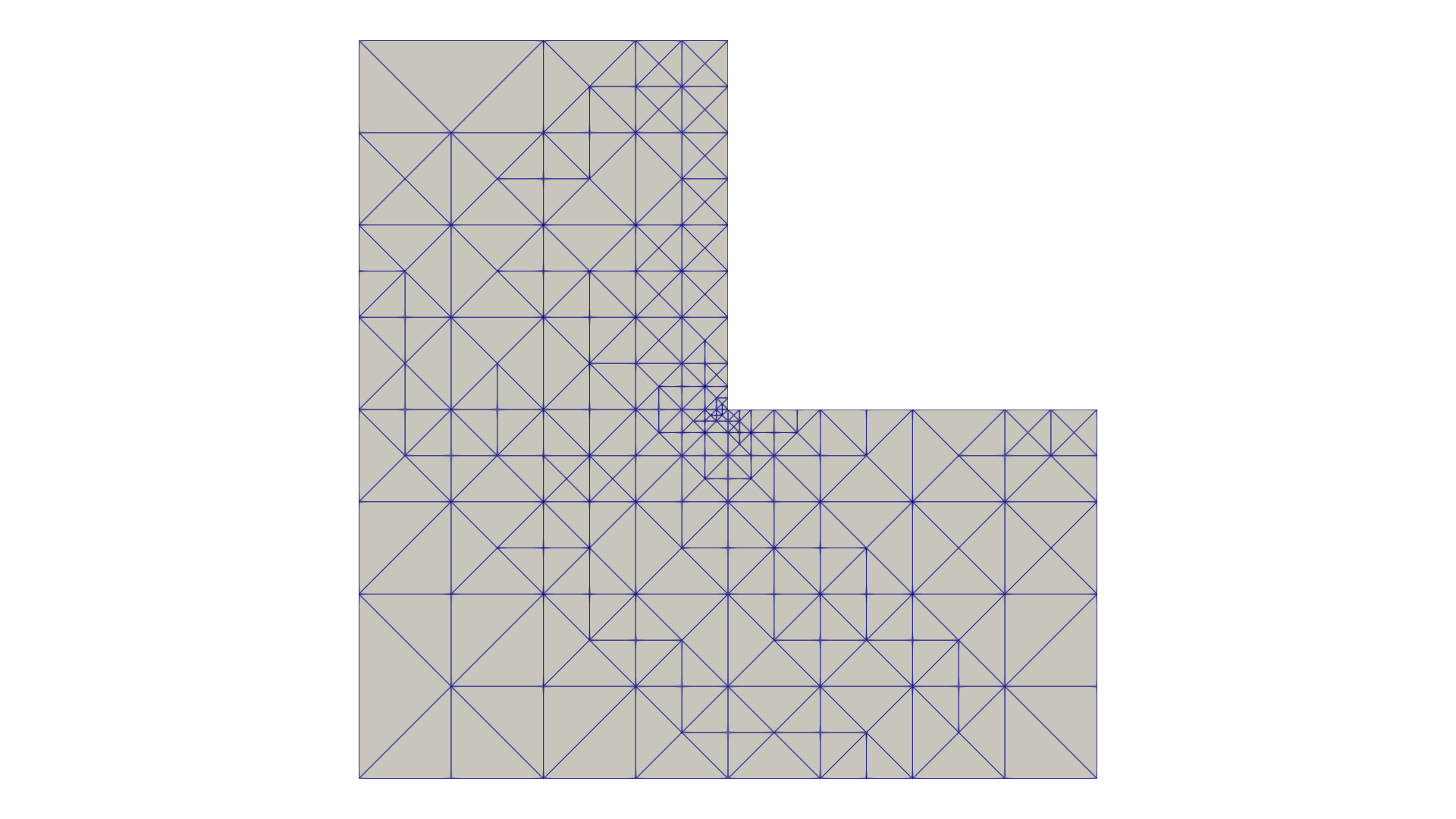}}
        \\
    \caption{Experiment 2: A sample of the adaptively refined meshes.}
    \label{fig:MFG_experiment_2_meshes}
\end{figure}

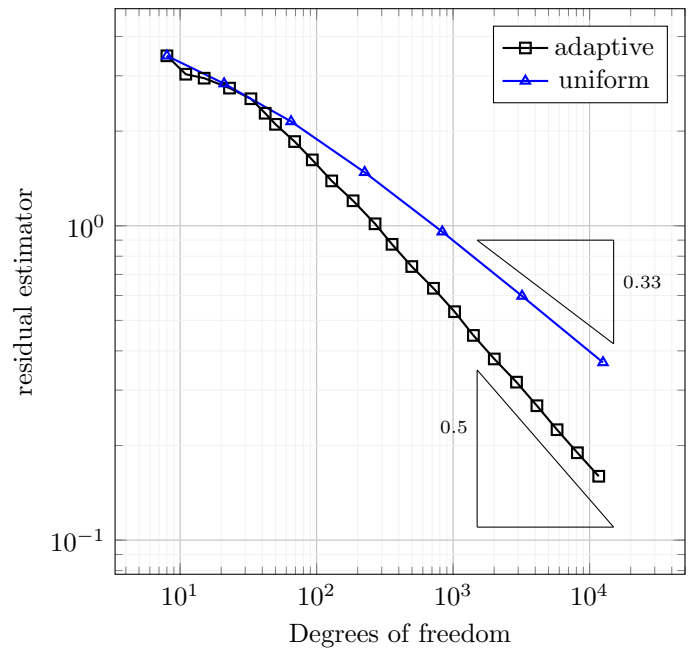
\begin{figure}
    \centering
    \begin{tikzpicture}
    \begin{groupplot}[group style={
                      group name= myplot,
                      group size= 2 by 1,
                      horizontal sep=1.5cm,
                      vertical sep=1.5cm},
                      width=0.65\linewidth,
                      height=0.65\linewidth,
                      xmode=log, xmax=5e4,
                      ymode=log,
                      axis background/.style={fill=gray!0}, 
                      legend pos=south east,
                      grid=both,
                      grid style={line width=.1pt, draw=gray!10},
                      major grid style={line width=.2pt,draw=gray!50}]
                        
        \nextgroupplot[xlabel={Degrees of freedom},ylabel={residual estimator},legend pos=north east]
                \addplot+[mark=square, thick, black, mark options={black, solid}] table [x=num_dofs, y=eta, col sep=comma] {experiment_3_adaptive.csv};
                \addplot+[mark=triangle, thick, blue, mark options={blue, solid}] table [x=num_dofs, y=eta, col sep=comma] {experiment_3_uniform.csv};
                \legend{adaptive, uniform};

\addplot[mark=none, solid, black] coordinates {(1500, 1.1*1e-1) (15000, 1.1*1e-1)}; 
\addplot[mark=none, solid, black] coordinates {(15000, 1.1*1e-1) (1500, 1.1*1e-1 * 10^0.5)}; 
\addplot[mark=none, solid, black] coordinates {(1500, 1.1*1e-1 * 10^0.5) (1500, 1.1*1e-1)}; 
\addplot[mark=none] coordinates {(1500, 0.5*1.1*1e-1+0.5*1.1*1e-1*10^0.5)} node[anchor=east] {$0.5$};

\addplot[mark=none, solid, black] coordinates {(15000, 1.5*0.6) (1500, 1.5*0.6)}; 
\addplot[mark=none, solid, black] coordinates {(15000, 1.5*0.6) (15000, 1.5*0.6 * 10^-0.33)}; 
\addplot[mark=none, solid, black] coordinates {(15000, 1.5*0.6 * 10^-0.33) (1500, 1.5*0.6)}; 
\addplot[mark=none] coordinates {(15000, 1.5*0.3+1.5*0.3*10^-0.33)} node[anchor=west] {$0.33$};
         \end{groupplot}
\end{tikzpicture}
\caption{Experiment 2: Comparison between adaptive and uniform mesh refinement.} \label{fig:MFG_experiment_2_estimators}
\end{figure}

\paragraph{Experiment 3.}
For our final experiment, we demonstrate the benefits of adaptively refined meshes for MFG with complex geometries and large solution gradients.
The domain $\Omega$ is chosen to be a subset of $(0,1)^2$ that is constructed by removing shapes of various sizes and smoothness from the interior, see Figure~\ref{fig:MFG_experiment_3_meshes} for an illustration. 
The PDE data for this experiment is chosen to be $\nu = 1/4$, $H[\nabla u] = \sqrt{\abs{\nabla u}^2 + 1}$,  $F[m] = m^3$, $G(x,y) = 1-x$.
Note that $F$ is nonlinear and does not satisfy the Lipschitz condition~\eqref{F2} in this example.
We impose homogenous Dirichlet boundary conditions for both $u$ and $m$ on $\GammaD = \{(x,y) \in \partial \Omega\ :\ x = 1 \}$. 
Similar to Experiment 2, the Neumann conditions on $\GammaN = \partial \Omega \setminus \GammaD$ are given by $\nu \nabla u \cdot n  =0$ on $\GammaN$, and $ \nu \nabla m \cdot n + m \frac{\partial H}{\partial p}[\nabla u] \cdot n  = \chi_{\GammaIn}$ on $\GammaN$, where $\chi_{\GammaIn}$ is the indicator function for the set $\GammaIn=\{ (x,y) \in \partial \Omega\ :\ x=0 \}$. As before, the Neumann condition for the KFP equation is nonlinear.
Computations are performed on sequences of adaptive and uniformly refined meshes from an initial non-uniform triangulation $\T$ of $\Omega$. 
The resulting adaptively refined meshes are presented in Figure~\ref{fig:MFG_experiment_3_meshes}, along with the solution in Figure~\ref{fig:MFG_experiment_3_snapshot}. 
These demonstrate that the adaptive algorithm is successfully concentrating mesh refinements around regions of high player congestion, and also at some of the re-entrant corners.
This is further supported by the approximated flux term $b_\mathcal{T} \coloneqq -\nu \nabla m_\mathcal{T} - m_\mathcal{T} \frac{\partial H}{\partial p}[\nabla u_\mathcal{T}]$, which we present in Figure~\ref{fig:MFG_experiment_3_drift}.
\begin{figure}
    \centering
    \subfloat[$0$ adaptive mesh refinements]{
        \includegraphics[width=0.5\linewidth,trim=300 50 300 50, clip]{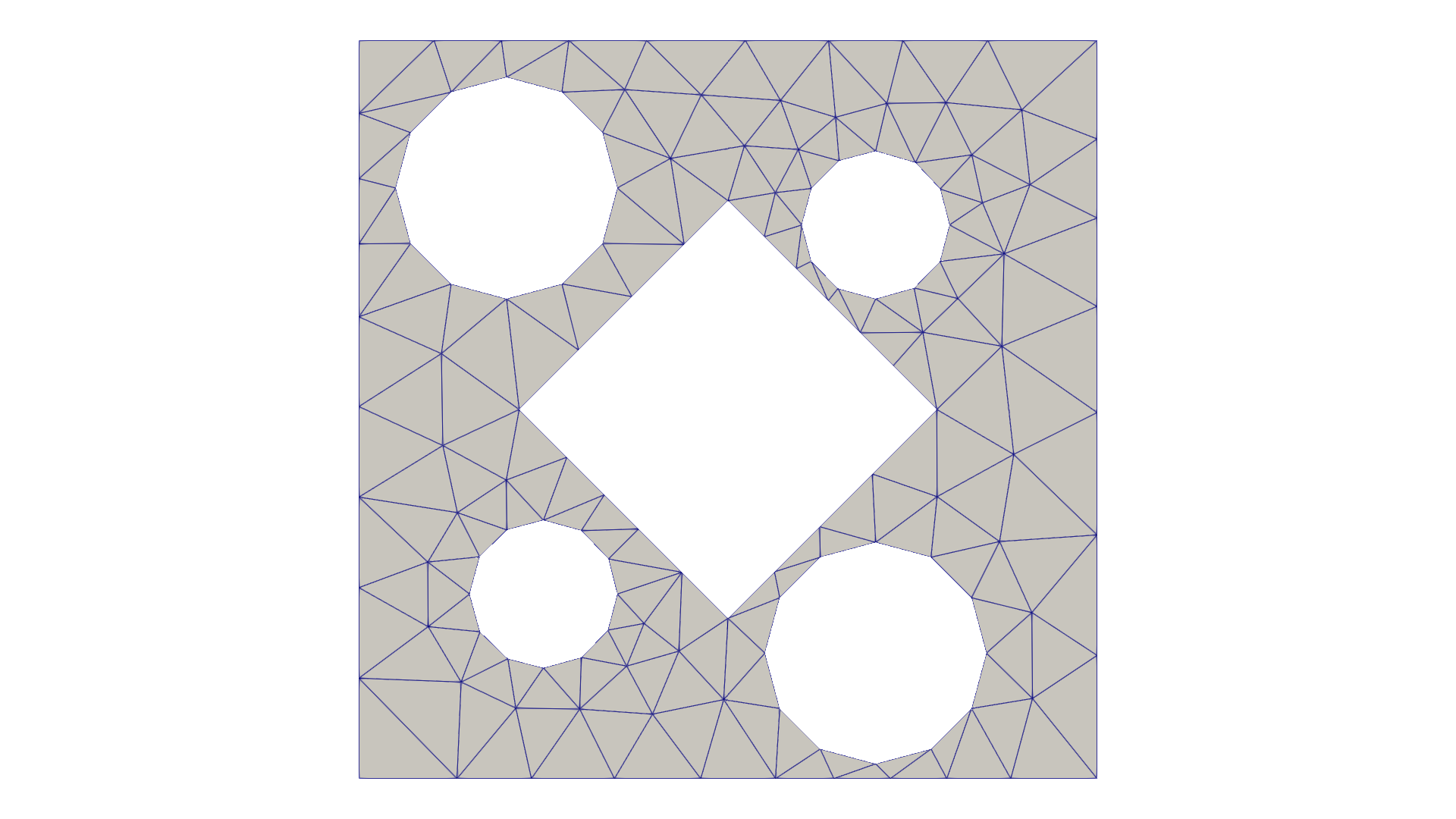}}
    \subfloat[$10$ adaptive mesh refinements]{
        \includegraphics[width=0.5\textwidth,trim=300 50 300 50, clip]{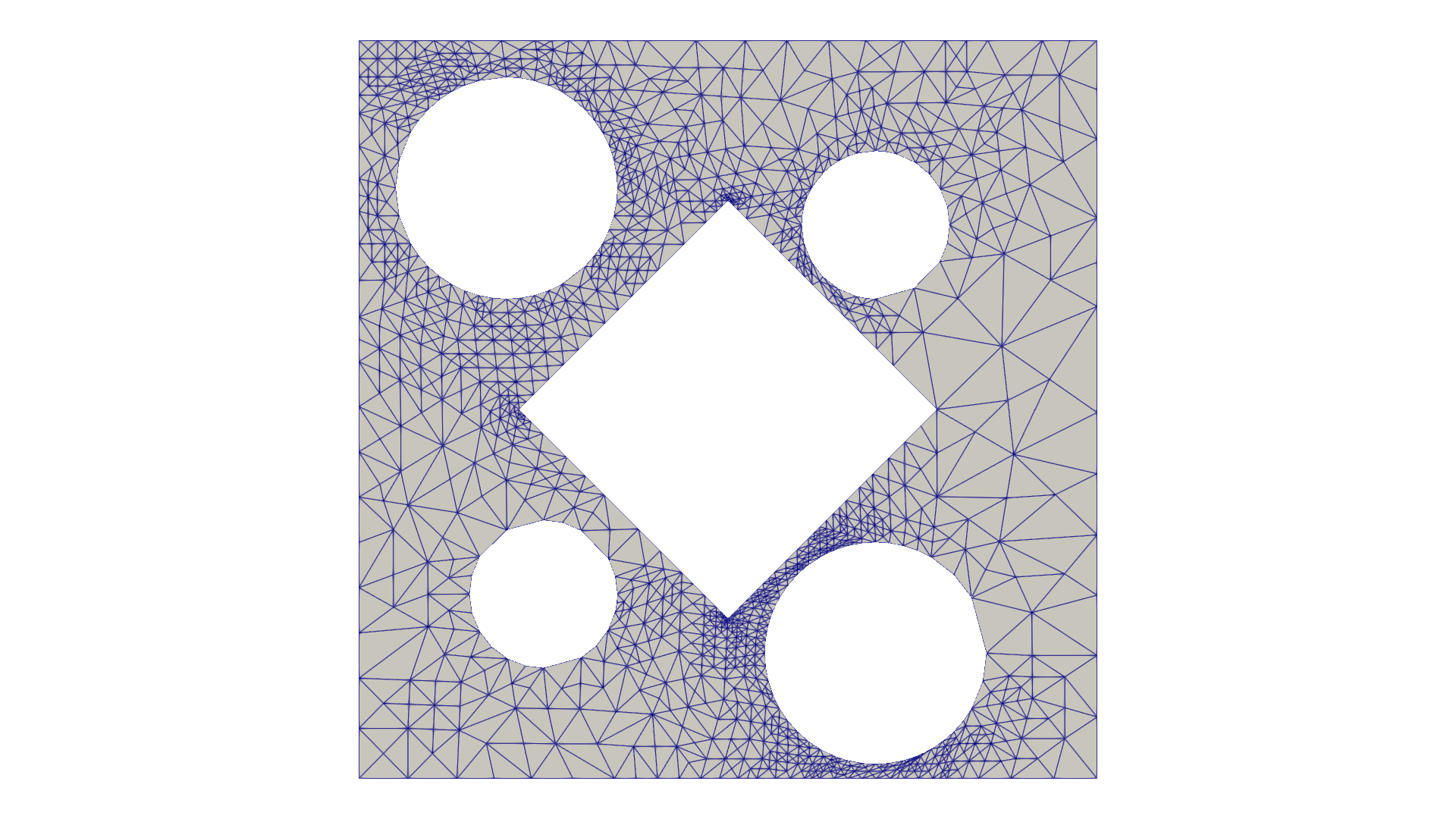}}
        \\
    \caption{Experiment 3: A sample of the adaptively refined mesh.}
    \label{fig:MFG_experiment_3_meshes}
\end{figure}
\begin{figure}
    \centering
    \subfloat[value function $u_\mathcal{T}$]{\includegraphics[width=0.5\linewidth,trim=400 0 0 0, clip]{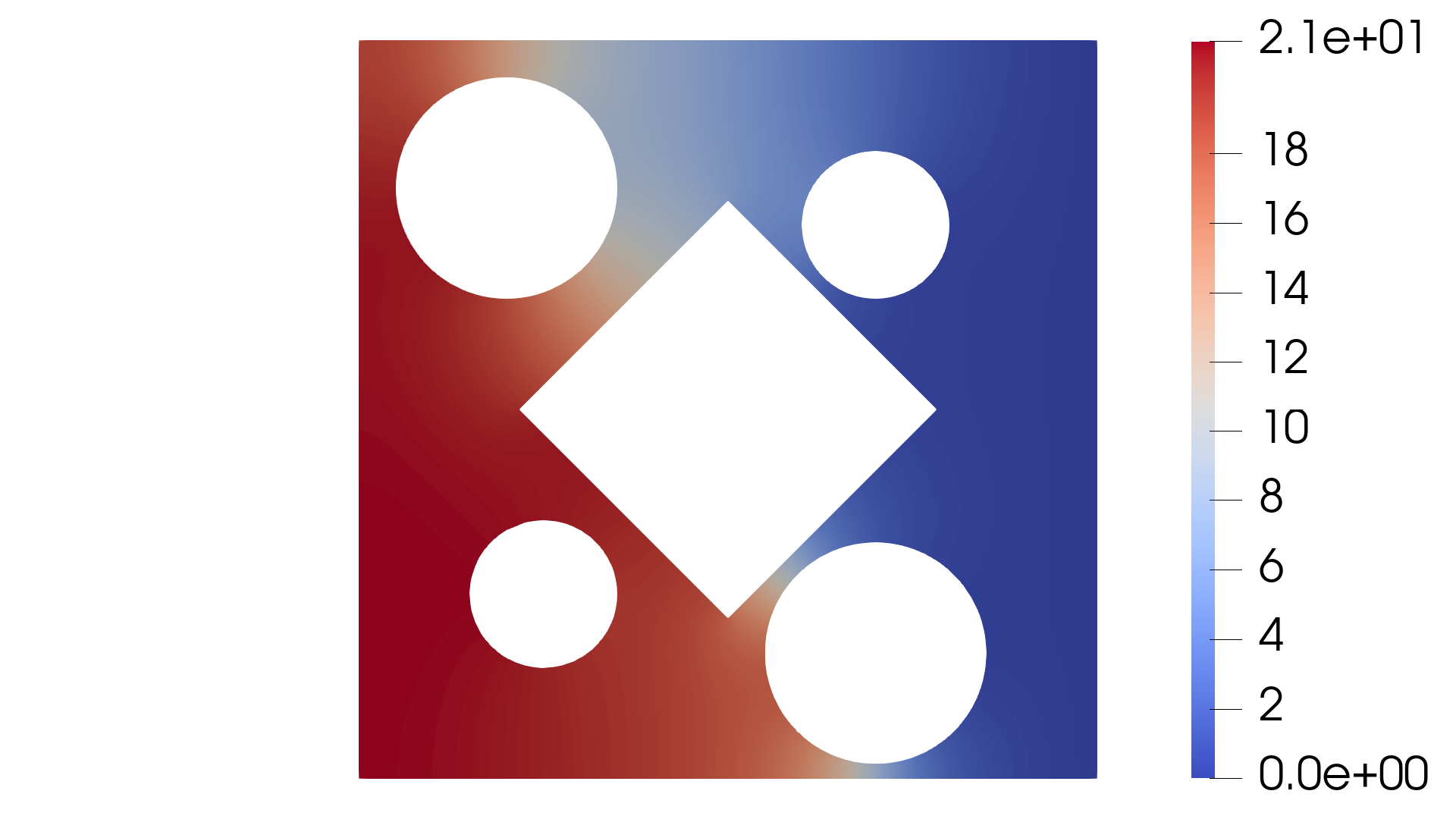}}~\hspace{0.1cm}~
    \subfloat[density function $m_\mathcal{T}$]{\includegraphics[width=0.5\linewidth,trim=400 0 0 0, clip]{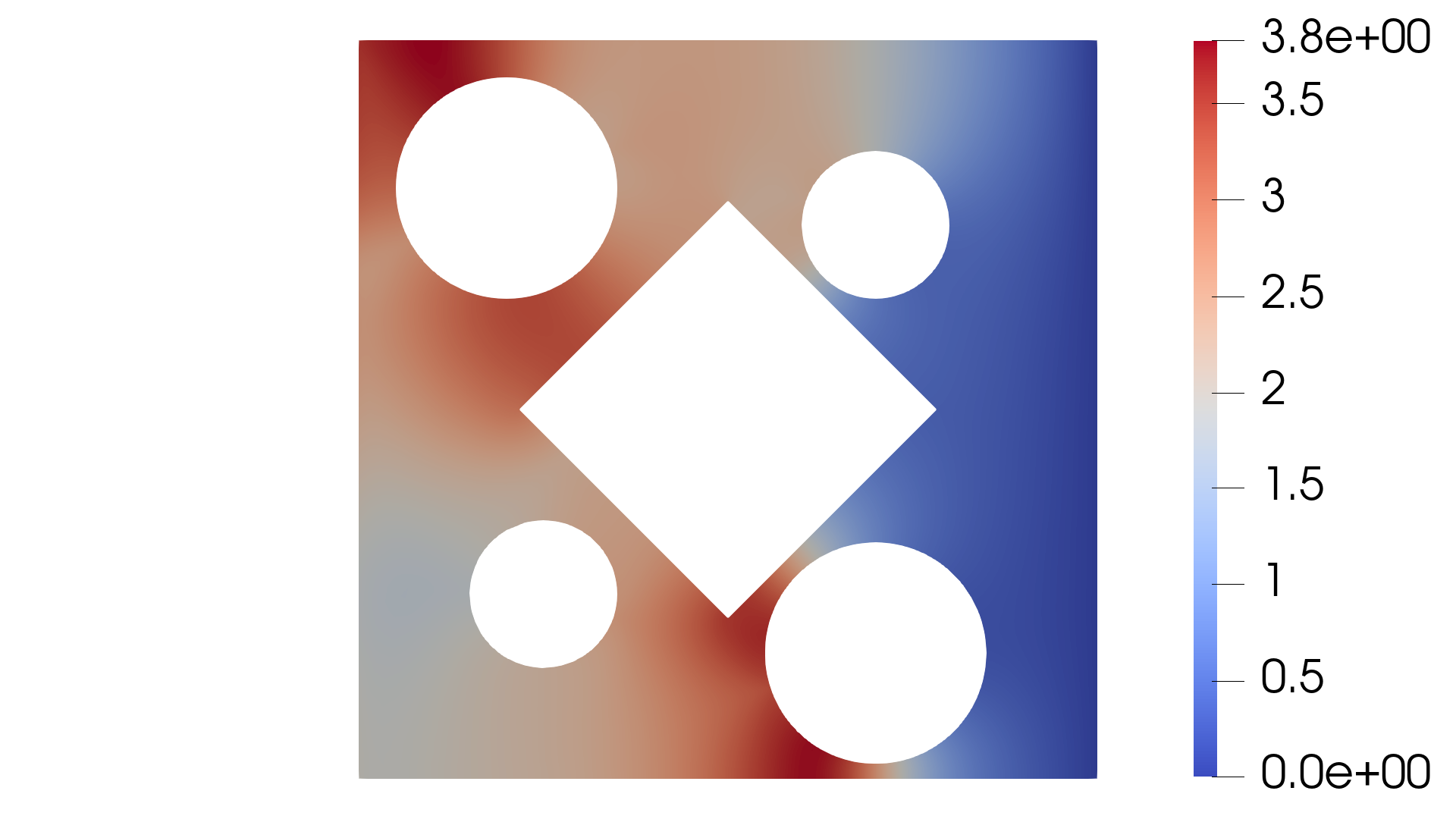}}
 \caption{Experiment 3: Numerical solutions computed on the adaptively refined mesh with 20 refinements.}
    \label{fig:MFG_experiment_3_snapshot}
\end{figure}
\begin{figure}
    \centering
    \subfloat{\includegraphics[width=0.5\linewidth,trim=400 00 0 0, clip]{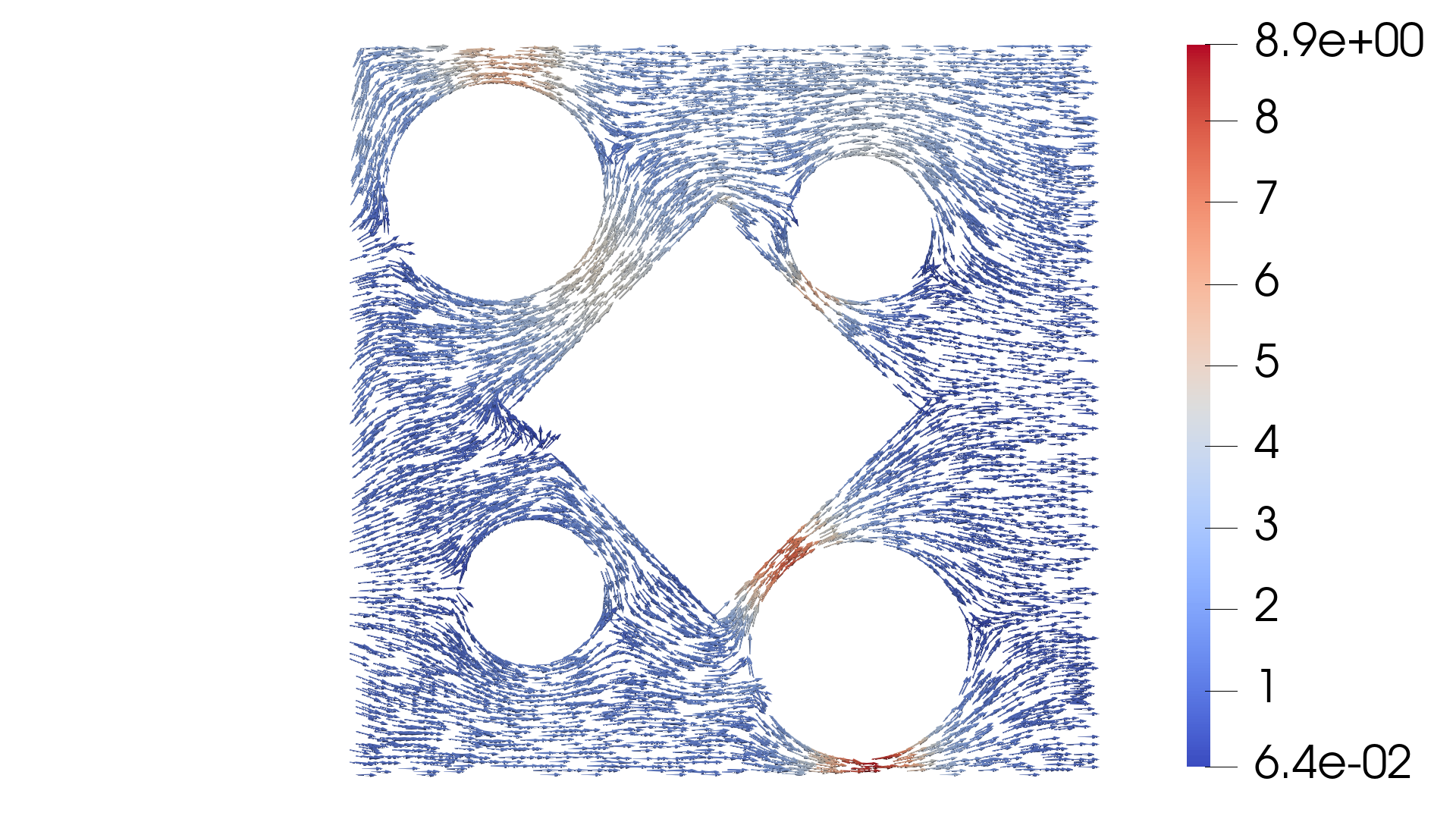}}
    \caption{Experiment 3: Quiver plot of the flux term $b_\mathcal{T} \coloneqq -\nu \nabla m_\mathcal{T} - m_\mathcal{T} \frac{\partial H}{\partial p}[\nabla u_\mathcal{T}]$ computed on the adaptively refined mesh with 20 refinements.}
    \label{fig:MFG_experiment_3_drift}
\end{figure}
In Figure~\ref{fig:MFG_experiment_3_comparison}, we compare the convergence of the residual estimator between the adaptive and uniform mesh refinement algorithms. 
We observe that uniform mesh refinement attains a lower rate of convergence of $O(N^{-r})$ for $r \approx \frac{4}{10}$ whereas adaptive mesh refinement maintains the $O(N^{-1/2})$ rate of convergence, where $N$ denotes the number of degrees of freedom. 
Although this example falls outside the scope of the analysis of Section~\ref{sec:affine_preserving_stab}, we see from Figure~\ref{fig:MFG_experiment_3_stab_jump} that the stabilization estimator, approximated by $\widetilde{\eta}_{\mathrm{stab},i}$ from Remark~\ref{rem:discrete_computation}, remains bounded by the jump component of the residual estimator for both adaptive and uniform mesh refinement.

\begin{figure}
    \centering
    \begin{tikzpicture}
    \begin{groupplot}[group style={
                      group name= myplot,
                      horizontal sep=1.5cm,
                      vertical sep=1.5cm},width=0.65\linewidth,
                      height=0.65\linewidth,
                      xmode=log,
                      ymode=log,
                      axis background/.style={fill=gray!0}, 
                      legend pos=south east,
                      grid=both,
                      grid style={line width=.1pt, draw=gray!10},
                      major grid style={line width=.2pt,draw=gray!50}]
                        
        \nextgroupplot[xlabel={Degrees of freedom},ylabel={residual estimator},legend pos=north east]
                \addplot+[mark=square, thick, black, mark options={black, solid}] table [x=num_dofs, y=eta, col sep=comma] {experiment_2.csv};
                \addplot+[mark=triangle, thick, blue, mark options={blue, solid}] table [x=num_dofs, y=eta, col sep=comma] {experiment_2_uniform.csv};
                \legend{adaptive, uniform};

\addplot[mark=none, solid, black] coordinates {(100000/10, 1.5*0.4) (100000, 1.5*0.4)}; 
\addplot[mark=none, solid, black] coordinates {(100000, 1.5*0.4) (100000, 1.5*0.4 * 10^-0.4)}; 
\addplot[mark=none, solid, black] coordinates {(100000, 1.5*0.4 * 10^-0.4) (100000/10, 1.5*0.4)}; 
\addplot[mark=none] coordinates {(100000, 1.5*0.2+1.5*0.2*10^-0.4)} node[anchor=west] {$0.4$};

\addplot[mark=none, solid, black] coordinates {(66000, 0.12) (66000/10, 0.12)}; 
\addplot[mark=none, solid, black] coordinates {(66000/10, 0.12) (66000/10, 0.12 * 10^0.5)}; 
\addplot[mark=none, solid, black] coordinates {(66000/10, 0.12 * 10^0.5) (66000, 0.12)}; 
\addplot[mark=none] coordinates {(66000/10, 0.05*10^0.5+0.05)} node[anchor=east] {$0.5$};
    \end{groupplot}
\end{tikzpicture}
\caption{Experiment 3: Comparison between adaptive and uniform mesh refinements.} \label{fig:MFG_experiment_3_comparison}
\end{figure}

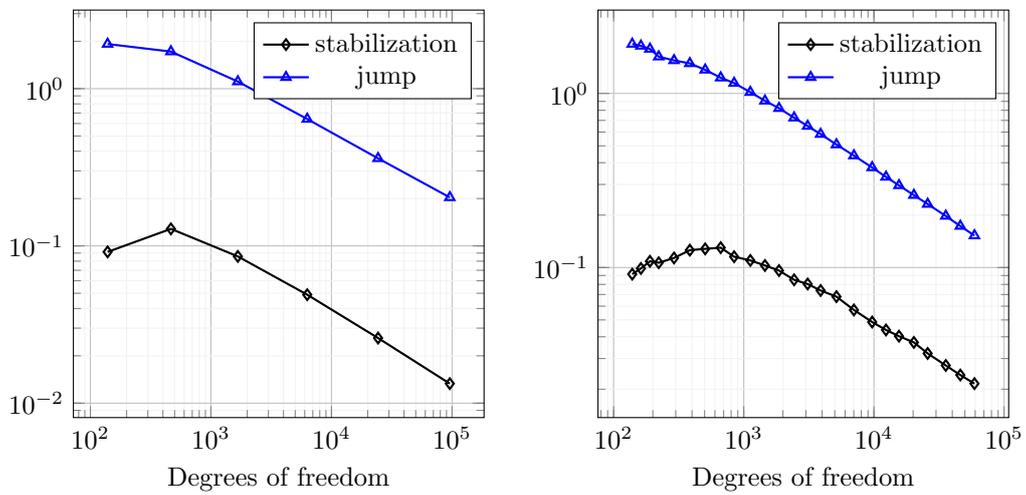
\begin{figure}
    \centering
    \begin{tikzpicture}
    \begin{groupplot}[group style={
                      group name= myplot,
                      group size= 2 by 2,
                      horizontal sep=1.5cm,
                      vertical sep=1.5cm},
                      width=0.5\linewidth,
                      height=0.5\linewidth,
                      xmode=log,
                      ymode=log,
                      axis background/.style={fill=gray!0}, 
                      legend pos=south east,
                      grid=both,
                      grid style={line width=.1pt, draw=gray!10},
                      major grid style={line width=.2pt,draw=gray!50}]
                        
        \nextgroupplot[xlabel={Degrees of freedom},legend pos=north east]
                \addplot+[mark=diamond, thick, black, mark options={black, solid}] table [x=num_dofs, y=stab, col sep=comma] {experiment_2_uniform.csv};
                \addplot+[mark=triangle, thick, blue, mark options={blue, solid}] table [x=num_dofs, y=jump, col sep=comma] {experiment_2_uniform.csv};
                \legend{stabilization, jump};
                
        \nextgroupplot[xlabel={Degrees of freedom},legend pos=north east]
                \addplot+[mark=diamond, thick, black, mark options={black, solid}] table [x=num_dofs, y=stab, col sep=comma] {experiment_2.csv};
                \addplot+[mark=triangle, thick, blue, mark options={blue, solid}] table [x=num_dofs, y=jump, col sep=comma] {experiment_2.csv};
                \legend{stabilization, jump}
    \end{groupplot}
\end{tikzpicture}
\caption{Experiment 3: Comparison between the stabilization and jump components of the residual estimator for the sequence of uniform (left) and adaptive (right) mesh refinement algorithms.} \label{fig:MFG_experiment_3_stab_jump}
\end{figure}

\section{Conclusion}\label{sec:conclusion}
This paper presented computable a posteriori error bounds for steady state MFG. We proved the reliability and efficiency of residual a posteriori error estimators for general stabilized FEM. For linearity preserving stabilizations, we further showed that the stabilization estimator is bounded by the jump terms of the residual estimator, resulting in a fully localizable estimator. The theoretical results were further validated through numerical experiments. Additionally, experiments with adaptive FEM demonstrate that the a posteriori error estimators yield more computationally efficient approximations compared to standard refinement techniques.

\section*{Acknowledgements}
Yohance A.~P.~Osborne was supported by The Royal Society Career Development Fellowship.
Iain~Smears and Harry~Wells were supported by the Engineering and Physical Sciences Research Council [grant number EP/Y008758/1].

\ifJ
\else

\fi

\end{document}